\theoremstyle{plain}
\newtheorem{lemma}{Lemma}[section]
\newtheorem{theorem}[lemma]{Theorem}
\newtheorem{cor}[lemma]{Corollary}
\newtheorem{prop}[lemma]{Proposition}
\theoremstyle{definition}
\newtheorem{exam}{Example}[section]
\newcommand{\R}{\mathbb{R}}
\newcommand{\norm}[1]{\left\Vert#1\right\Vert}
\newcommand{\abs}[1]{\left\vert#1\right\vert}
\newcommand{\set}[1]{\left\{#1\right\}}
\newcommand{\bfx}{\bm{x}}
\newcommand{\bfzero}{\bm{0}}
\newcommand{\bfone}{\bm{1}}
\newcommand{\bfa}{\bm{a}}
\newcommand{\bfb}{\bm{b}}
\newcommand{\bfc}{\bm{c}}
\newcommand{\bfs}{\bm{s}}
\newcommand{\bft}{\bm{t}}
\newcommand{\bfu}{\bm{u}}
\newcommand{\bfv}{\bm{v}}
\newcommand{\bfX}{\bm{X}}
\newcommand{\bfY}{\bm{Y}}
\newcommand{\bfy}{\bm{y}}
\newcommand{\bfZ}{\bm{Z}}
\newcommand{\bfz}{\bm{z}}
\newcommand{\bfmu}{\bm{\mu}}
\newcommand{\bfxi}{\bm{\xi}}
\newcommand{\indic}{ {{\rm 1}\hskip-2.5pt{\rm l}}}
\begin{document}

\begin{center}
{\Large {\bf On a class of norms generated by nonnegative integrable distributions}}
\end{center}

\begin{center}
Michael Falk$^{a}$ and Gilles Stupfler$^{b}$ \\ 
\vspace*{0.2 cm}

\noindent
$^{a}$ Institute of Mathematics, University of W\"{u}rzburg, W\"{u}rzburg, Germany \\ 
$^{b}$ School of Mathematical Sciences, University of Nottingham, United Kingdom \\
\end{center}

{\bf MSC2010 subject classification:} 60E10, 60G99, 62H05, 62H12. \\

{\bf Keywords:} Characteristic function, $D$-norm, empirical distribution function, Hausdorff metric, multivariate distribution, norm, Wasserstein metric. 

\begin{abstract} 
\noindent
We show that any distribution function on $\R^d$ with nonnegative, nonzero and integrable marginal distributions can be characterized by a norm on $\R^{d+1}$, called $F$-norm. We characterize the set of $F$-norms and prove that pointwise convergence of a sequence of $F$-norms to an $F$-norm is equivalent to convergence of the pertaining distribution functions in the Wasserstein metric. On the statistical side, an $F$-norm can easily be estimated by an empirical $F$-norm, whose consistency and weak convergence we establish. \vskip1ex
\noindent
The concept of $F$-norms can be extended to arbitrary random vectors under suitable integrability conditions fulfilled by, for instance, normal distributions. The set of $F$-norms is endowed with a semigroup operation which, in this context, corresponds to ordinary convolution of the underlying distributions. Limiting results such as the central limit theorem can then be formulated in terms of pointwise convergence of products of $F$-norms. 
\vskip1ex
\noindent
We conclude by showing how, using the geometry of $F$-norms, we may characterize nonnegative integrable distributions in $\R^d$ by simple compact sets in $\R^{d+1}$. We then relate convergence of those distributions in the Wasserstein metric to convergence of these characteristic sets with respect to Hausdorff distances.
\end{abstract}

\section{Introduction}

It was observed only recently that a particular kind of norms on $\R^d$, called \textit{$D$-norms}, are the skeleton of multivariate extreme value theory. Deep results like Takahashi's characterizations~\citep[see][]{takahashi1987,takahashi1988} of multivariate max-stable distributions with independent or completely dependent margins by the value of their extremal coefficient, and classifications of multivariate dfs in terms of their multivariate extreme value domains of attraction~\citep{deh84,gal87} turn out to be easily seen properties of $D$-norms. The framework of $D$-norms has also recently been used to design simulation techniques for max-stable processes~\citep{falhofzot2015,falzot2017} and prove new results on multivariate records~\citep{domfalzot2018,domzot2018}. The concept of $D$-norms can be extended to define norms on functional spaces, and mathematically complex results such as the classification of simple max-stable distributions in spaces of continuous functions~\citep{ginhahvat1990} can be rewritten elegantly in the framework of functional $D$-norms. In addition, $D$-norms simultaneously provide a mathematical topic, which can be studied independently: an early, short introduction is~\citet{fahure10}, and an up-to-date account of $D$-norms is~\citet{falk2019}.

$D$-norms are defined via a random vector (rv), called {\it generator}. The distribution function (df) of this rv, however, is not uniquely determined, and there exists an infinite number of generators of the same $D$-norm. It was shown by \citet{falkst16} that the $D$-norm characterizes the distribution of a generator if the constant function one is added to the generator as a further component. This led to the definition of the \textit{max-characteristic function}, which can be used to identify the distribution of any multivariate distribution with nonnegative and integrable components. This notion of max-characteristic function is particularly interesting when considering standard extreme value distributions such as the Generalized Pareto distribution, for which it has a simple closed form, although the standard characteristic function based on taking a Fourier transform does not. However, the max-characteristic function does not define a norm and therefore loses, compared to $D$-norms, a number of interesting algebraic and geometric properties.

In this paper we build on these observations and construct a norm on $\R^{d+1}$, called $F$-norm, which contains the notion of max-characteristic function. In Section~\ref{defi}, we present the concept of $F$-norms, and show that the df of each rv $\bfX=(X_1,\dots,X_d)$ on $\R^d$ with nonnegative, nonzero and integrable components can be characterized by the pertaining $F$-norm. We then list examples and derive basic properties as well as an inversion formula to retrieve a distribution from its associated $F$-norm. We also fully characterize the set of $F$-norms and obtain a simple classification in two dimensions. In Section~\ref{FnormDnorm} we investigate the similarities and the differences between $F$-norms and $D$-norms in detail. We show in particular that the extremal coefficient of a multivariate extreme value copula, which can be written in terms of a $D$-norm, can be recovered in a simple fashion from the $F$-norm the copula generates. This suggests that statistically important quantities such as the extremal coefficient can be inferred by estimating $F$-norms.

In Section \ref{seclim} we carry this idea forward and analyse the convergence of sequences of $F$-norms. We start by proving that pointwise convergence of a sequence of $F$-norms to an $F$-norm is equivalent with convergence of the pertaining dfs with respect to the Wasserstein metric. We then add some statistical views on $F$-norms to this section. The (random) $F$-norm $\norm\cdot_{\widehat F_n}$ of the empirical df $\widehat F_n$ of a sample of $n$ independent and identically distributed (iid) rvs is an estimator of $\norm\cdot_F$ with the structure of a sample mean. Local uniform consistency and asymptotic normality of $\norm\cdot_{\widehat F_n}$ as an estimator of $\norm\cdot_F$ are then consequences of the law of large numbers and the multivariate central limit theorem. More strongly, we establish the $\sqrt{n}$-functional weak convergence of $\norm\cdot_{\widehat F_n} - \norm\cdot_F$ to a Gaussian process which is essentially a functional of a Brownian bridge.

Section~\ref{seclim} suggests that $F$-norms interact nicely with well-known modes of convergence and theorems of statistical analysis. In order to be able to use these norms in practice for asymptotic analyses, it is important to understand how they behave with respect to simple algebraic operations. It turns out that two $F$-norms can be multiplied by constructing the $F$-norm generated by the componentwise product of pairs of {\it independent} rvs giving rise to the individual $F$-norms. We also provide an integral formula making it possible, given two $F$-norms, to compute this product in a straightforward way. Equipped with this commutative multiplication, the set of $F$-norms is a semigroup with an identity element, and we can fully identify the invertible and idempotent elements for this operation. This algebraic aspect is investigated in Section \ref{sec:algebra}.

The concept of $F$-norms as we introduce it originally focuses on multivariate rvs with nonnegative and integrable components, and thus excludes common distributions such as the multivariate normal distribution. In Section \ref{sec:generalrv} we show that we can also define, by an exponential transformation, a concept of $F$-norms for a rv attaining negative values, under an integrability condition. This indeed allows us to include multivariate normal distributions, as well as other interesting examples. The multiplication of $F$-norms in Section \ref{sec:algebra} then represents the convolution of two rvs, and central limit theorems for iid rvs now mean pointwise convergence of the sequence of corresponding products of $F$-norms.

A multivariate distribution can then, under an integrability assumption, be characterized by its associated $F$-norm. The norm structure makes it possible to reduce the knowledge of the df $F$ to even simpler objects than the full $F$-norm. Because each norm is a homogeneous function, the knowledge of an $F$-norm (and thus of the underlying df $F$) is equivalent to its knowledge on the unit simplex. Besides, and since a norm is characterized by its unit sphere, multivariate distributions on $\R^d$ can be characterized, under suitable integrability conditions on the components, by the part of the unit sphere for their $F$-norm contained in the positive orthant of $\R^{d+1}$, which is a compact set. Interestingly, the convergence of $F$-norms, and therefore convergence of $d$-dimensional distributions in the Wasserstein metric, can be shown to be equivalent to the convergence of these unit spheres with respect to any Hausdorff metric induced by a norm in $\R^{d+1}$. These geometric aspects are investigated in Section \ref{sec:geometry}.

\section{The concept of $F$-norms}
\label{concept}
\subsection{Definition, examples, and basic properties}
\label{defi}
Let $d\geq 1$ and $\bfX=(X_1,\dots,X_d)$ be a rv satisfying the fundamental assumption \\

\noindent
$(\mathcal{H})$ Each $X_i$ is almost surely (a.s.) nonnegative with $0<E(X_i)<\infty$. \\

\noindent
For $\bfx=(x_0,x_1,\dots,x_d)\in\R^{d+1}$, define a mapping $\norm{\cdot}_{\bfX}$ by
\begin{equation}\label{defn:definition of F-norm}
\norm{\bfx}_{\bfX}:= E\left(\max\left(\abs{x_0}, \abs{x_1}X_1,\dots,\abs{x_d}X_d\right)\right).
\end{equation}
This paper is based on the following fundamental observations, presented in the two subsequent results.
\begin{lemma}
\label{lemXnorm}
If $\bfX$ satisfies $(\mathcal{H})$ then $\norm{\cdot}_{\bfX}$ is a norm on $\R^{d+1}$.
\end{lemma}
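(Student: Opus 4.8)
The plan is to verify the three defining properties of a norm—positive definiteness, absolute homogeneity, and the triangle inequality—for the mapping $\norm{\cdot}_{\bfX}$ defined in \eqref{defn:definition of F-norm}. First I would observe that $\norm{\bfx}_{\bfX}$ is always finite: by $(\mathcal{H})$, each $X_i$ is integrable, so the integrand $\max(\abs{x_0},\abs{x_1}X_1,\dots,\abs{x_d}X_d)$ is bounded above by $\abs{x_0}+\sum_{i=1}^d \abs{x_i}X_i$, whose expectation is finite; this shows $\norm{\cdot}_{\bfX}$ takes values in $[0,\infty)$ and also guarantees that all the manipulations below involve finite quantities.

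Next I would treat absolute homogeneity, which is the easiest step: for $\lambda\in\R$, scaling $\bfx$ by $\lambda$ pulls a factor $\abs{\lambda}$ out of each term inside the maximum, and since $\abs{\lambda}\geq 0$ it factors out of $\max(\cdot)$ and then out of the expectation by linearity, giving $\norm{\lambda\bfx}_{\bfX}=\abs{\lambda}\,\norm{\bfx}_{\bfX}$. For the triangle inequality I would exploit that the maximum is itself a norm on $\R^{d+1}$ in the relevant sense: fixing the realization of $\bfX$, the map $\bfx\mapsto\max(\abs{x_0},\abs{x_1}X_1,\dots,\abs{x_d}X_d)$ is a (weighted sup-type) seminorm in $\bfx$, so the pointwise triangle inequality $\max(\abs{x_0+y_0},\dots)\leq\max(\abs{x_0},\dots)+\max(\abs{y_0},\dots)$ holds almost surely; taking expectations and using monotonicity and linearity of $E$ then yields $\norm{\bfx+\bfy}_{\bfX}\leq\norm{\bfx}_{\bfX}+\norm{\bfy}_{\bfX}$.

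The main obstacle, and the only place where the full strength of $(\mathcal{H})$ is needed, is positive definiteness: I must show $\norm{\bfx}_{\bfX}=0$ implies $\bfx=\bfzero$. Since the integrand is nonnegative, $\norm{\bfx}_{\bfX}=0$ forces $\max(\abs{x_0},\abs{x_1}X_1,\dots,\abs{x_d}X_d)=0$ almost surely, hence $\abs{x_0}=0$ and $\abs{x_i}X_i=0$ a.s.\ for each $i$. The coordinate $x_0$ is immediately zero. For each $i\geq 1$, if $x_i\neq 0$ then $X_i=0$ almost surely, which contradicts the assumption $E(X_i)>0$ in $(\mathcal{H})$; this is precisely where the hypothesis that the marginal means are strictly positive (equivalently, that each $X_i$ is not a.s.\ zero) is indispensable, and it is why $(\mathcal{H})$ demands $0<E(X_i)$ rather than merely $E(X_i)<\infty$. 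Therefore $x_i=0$ for all $i$, so $\bfx=\bfzero$, completing the verification that $\norm{\cdot}_{\bfX}$ is a genuine norm.
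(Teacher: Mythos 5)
Your proof is correct and follows essentially the same route as the paper's: finiteness from the bound by $\abs{x_0}+\sum_i\abs{x_i}X_i$, homogeneity by factoring out $\abs{\lambda}$, the triangle inequality from the pointwise triangle inequality for the weighted sup applied before taking expectations, and positive definiteness from the fact that $E(X_i)>0$ rules out $X_i=0$ a.s. The only difference is that you spell out the routine steps that the paper dismisses as obvious.
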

\begin{proof} Clearly $\norm{\cdot}_{\bfX}$ is a well-defined and finite nonnegative function. Positive definiteness follows by noting that $\norm{\bfx}_{\bfX} = 0$ implies $x_0 = 0$ as well as $\max(\abs{x_1}X_1,\dots,$ $\abs{x_d}X_d) = 0$ almost surely, which in turn implies $x_1=\cdots=x_d=0$ because each $X_i$ is positive with nonzero probability. Homogeneity of $\norm{\cdot}_{\bfX}$ is obvious, and the triangle inequality simply follows from the usual triangle inequality for $|\cdot|$.
\end{proof}
It turns out that the norm $\norm{\cdot}_{\bfX}$ characterizes the df of $\bfX$. This is the content of our first main result, in which by the equality of two norms we mean their pointwise equality.
\begin{theorem}\label{theo:F-norm characterizes distribution}
Let $\bfX$ and $\bfY$ be rvs on $\R^d$, satisfying condition~$(\mathcal{H})$, with dfs $F$ and $G$. Then $F=G$ if and only if 
$
\norm\cdot_{\bfX}=\norm\cdot_{\bfY}.
$
\end{theorem}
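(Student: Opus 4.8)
The plan is to dispatch the easy implication directly and then devote the bulk of the argument to recovering $F$ from $\norm\cdot_{\bfX}$ through an inversion formula. If $F=G$ then $\bfX$ and $\bfY$ share the same law, so the expectation in~\eqref{defn:definition of F-norm} is identical for both vectors and $\norm\cdot_{\bfX}=\norm\cdot_{\bfY}$ follows at once. The substance lies entirely in the converse, so assume $\norm\cdot_{\bfX}=\norm\cdot_{\bfY}$ throughout $\R^{d+1}$.

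First I would restrict attention to arguments $\bfx=(x_0,x_1,\dots,x_d)$ with $x_0\ge0$ and $x_i>0$ for $i\ge1$, where the absolute values may be dropped. On this set I would rewrite the defining expectation via the layer-cake representation $E(Z)=\int_0^\infty P(Z>t)\,\d t$ applied to the nonnegative random variable $Z=\max(x_0,x_1X_1,\dots,x_dX_d)$; the integral is finite since $0<E(X_i)<\infty$ under~$(\mathcal{H})$. Splitting at $t=x_0$, and noting that $P(Z>t)=1$ for $t<x_0$ whereas $P(Z>t)=1-F(t/x_1,\dots,t/x_d)$ for $t\ge x_0$, produces
\begin{equation*}
\norm{\bfx}_{\bfX}=x_0+\int_{x_0}^\infty\left(1-F\left(\frac{t}{x_1},\dots,\frac{t}{x_d}\right)\right)\d t.
\end{equation*}

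The key step is then the inversion. Fixing a direction $(x_1,\dots,x_d)\in(0,\infty)^d$ and letting the hypothesis hold for every $x_0\ge0$ forces the tail integral $\int_{x_0}^\infty(1-F(t/x_1,\dots,t/x_d))\,\d t$ and its analogue for $G$ to coincide for all $x_0\ge0$. Two right-continuous, monotone integrands sharing the same integral over every half-line $[x_0,\infty)$ must agree; differentiating in $x_0$ (equivalently, invoking uniqueness of this Stieltjes-type inversion) gives $F(t/x_1,\dots,t/x_d)=G(t/x_1,\dots,t/x_d)$ at every continuity point, and hence everywhere by right-continuity. Choosing $t=1$ and $x_i=1/y_i$ then yields $F(\bfy)=G(\bfy)$ for every $\bfy\in(0,\infty)^d$, as such rays sweep out the entire open positive orthant.

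Finally I would propagate this equality from $(0,\infty)^d$ to all of $\R^d$. For $\bfy$ having a strictly negative coordinate both dfs vanish, the components being a.s.\ nonnegative; for the remaining boundary points one approaches $\bfy$ from within the positive orthant and uses right-continuity of $F$ and $G$ to transfer the equality. The main obstacle is the inversion step: one must justify the differentiation in $x_0$ and upgrade almost-everywhere equality of the integrands to equality everywhere, which is precisely where right-continuity and monotonicity of the dfs are indispensable.
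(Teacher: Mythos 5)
Your proof is correct, but it takes a genuinely different route from the paper's. The paper disposes of the converse in two lines: restricting the norm to $x_0=1$ recovers the max-characteristic function $\varphi_{\bfX}(\bfx)=E(\max(1,x_1X_1,\dots,x_dX_d))$ of \citet{falkst16}, whose Lemma~1.1 is already known to characterize the law of $\bfX$, so $\norm\cdot_{\bfX}=\norm\cdot_{\bfY}$ immediately forces $F=G$. You instead give a self-contained inversion argument: you derive the layer-cake representation (which is exactly Lemma~\ref{lemfund}, stated only after this theorem in the paper), fix a ray, vary $x_0$, and recover the integrand --- a nonincreasing, right-continuous function of $t$ along the ray --- from its tail integrals, upgrading almost-everywhere equality to everywhere via right-continuity before sweeping out the positive orthant and handling the boundary. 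This is in substance a re-proof of the cited external lemma, and it anticipates the paper's own inversion formula (Corollary~\ref{corinversion}). What each approach buys: the paper's reduction is shorter and reuses an established result; yours is self-contained, makes the Stieltjes-type inversion mechanism explicit, and correctly identifies the two points where care is needed (justifying the a.e.\ differentiation and the upgrade to everywhere-equality via monotonicity and right-continuity of $F$ along rays). Both are sound.
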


\begin{proof}
The function $\varphi_{\bfX}$ defined for any $\bfx=(x_1,\ldots,x_d)\ge\bfzero\in\R^d$ by
\begin{equation}\label{defn:definition of max-CF}
\varphi_{\bfX}(\bfx):= E\left(\max(1,x_1X_1,\dots,x_dX_d) \right)
\end{equation}
is the {\it max-characteristic function} (max-CF) pertaining to $\bfX$ (any operation on vectors such as $+,\geq,\ldots$ is meant componentwise throughout). As shown by \citet[Lemma 1.1]{falkst16} it characterizes the distribution of $\bfX$. Since clearly $\norm\cdot_{\bfX} = \norm\cdot_{\bfY} \Rightarrow \varphi_{\bfX} = \varphi_{\bfY}$, this implies the assertion.
\end{proof}

In view of the above result we denote the norm $\norm\cdot_{\bfX}$ by $\norm\cdot_F$ when $\bfX$ has df $F$, and we call every norm on $\R^{d+1}$ which has the representation \eqref{defn:definition of F-norm} an {\it $F$-norm}. 

Let us point out that Theorem~\ref{theo:F-norm characterizes distribution} is still valid when $\bfX$ is not assumed to have nonzero components, but the mapping $\norm{\cdot}_{\bfX}$ is then actually only a {\it seminorm} on $\R^{d+1}$. Extending the definition of the max-CF of $\bfX$ by considering the mapping $\norm\cdot_F$ thus generally leads to a seminorm rather than a norm. Observe though that unless $\bfX$ is the degenerate rv $\bfzero\in \R^d$, the mapping $\norm{\cdot}_F$ induces an $F$-norm on $\R^{d'+1}$, where $d'$ is the number of nonzero components of $\bfX$. There is therefore no loss of generality in considering $F$-norms rather than $F$-seminorms, and we do so in the remainder of this paper.

An $F$-norm is usually conveniently calculated by using the following fundamental formula.
\begin{lemma}
\label{lemfund}
Let $F$ be the df of a rv $\bfX$ satisfying condition~$(\mathcal{H})$. Then, for any $\bfx=(x_0,x_1,\dots,x_d)\in\R^{d+1}$, we have
$$
\norm{\bfx}_F = |x_0| + \int_{|x_0|}^{\infty} [1-F(t/|x_1|,\ldots,t/|x_d|)] \, dt
$$
with the convention $1/0=\infty$.
\end{lemma}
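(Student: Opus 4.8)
The plan is to express the expectation $\norm{\bfx}_F = E(\max(|x_0|,|x_1|X_1,\dots,|x_d|X_d))$ by integrating the tail (survival function) of the nonnegative random variable $M := \max(|x_0|,|x_1|X_1,\dots,|x_d|X_d)$. For any nonnegative random variable $M$, the standard identity $E(M)=\int_0^\infty P(M>t)\,dt$ holds. First I would split this integral at the deterministic constant $|x_0|$, noting that $M \geq |x_0|$ always, so $P(M>t)=1$ for all $t<|x_0|$; this contributes exactly $|x_0|$ from the range $[0,|x_0|)$.

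**Next I would** handle the range $t \geq |x_0|$. For such $t$ we have $P(M>t) = P(\max(|x_1|X_1,\dots,|x_d|X_d)>t)$, since the $|x_0|$ term can no longer force the maximum above $t$. I would then rewrite the complementary event: $\max_i(|x_i|X_i)\leq t$ is equivalent to $X_i \leq t/|x_i|$ for all $i=1,\dots,d$ simultaneously, which is precisely the event underlying the joint df $F$. Hence
\begin{equation*}
P\!\left(\max_i(|x_i|X_i)>t\right) = 1 - F\!\left(t/|x_1|,\dots,t/|x_d|\right),
\end{equation*}
and integrating over $[|x_0|,\infty)$ yields the stated formula. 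The convention $1/0=\infty$ correctly captures the case $x_i=0$, for which the constraint on $X_i$ becomes vacuous ($X_i\leq\infty$ a.s.).

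**The main obstacle** — really the only subtlety — is justifying the tail-integral identity and, relatedly, confirming finiteness so that the manipulations are legitimate. I would note that condition $(\mathcal{H})$ guarantees each $E(X_i)<\infty$, and since $M \leq |x_0| + \sum_i |x_i| X_i$, the variable $M$ is integrable, so $\norm{\bfx}_F<\infty$ and Tonelli/Fubini applies without concern. A minor care point is the boundary behavior of $F$ at the argument $t/|x_i|$: since $F$ is a df, one should use the right-continuous version and observe that the set of $t$ where $P(\max_i|x_i|X_i = t)>0$ has Lebesgue measure zero (atoms are at most countable), so replacing the strict inequality $>t$ by the df value $F(\dots)$ under the integral is immaterial. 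I would present the argument cleanly by treating the degenerate cases (some $x_i=0$, or all $x_1=\dots=x_d=0$) as covered uniformly by the $1/0=\infty$ convention rather than as separate cases.
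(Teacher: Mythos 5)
Your proposal is correct and follows exactly the paper's argument: apply the tail-integral identity $E(M)=\int_0^\infty P(M>t)\,dt$ to $M=\max(|x_0|,|x_1|X_1,\dots,|x_d|X_d)$, with the split at $|x_0|$ and the identification of $P(\max_i|x_i|X_i\le t)$ with $F(t/|x_1|,\dots,t/|x_d|)$ being the (routine) details the paper leaves implicit. Your extra remark about atoms is unnecessary---$P(M>t)=1-P(M\le t)$ is an exact identity---but harmless.
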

\begin{proof} This is a straightforward consequence of the well-known formula
$$
E(|Z|) = \int_0^{\infty} P(|Z|>t) \, dt
$$
applied to the nonnegative rv $Z=\max\left(\abs{x_0}, \abs{x_1}X_1,\dots,\abs{x_d}X_d\right)$.
\end{proof}

\begin{exam}[Degenerate $F$-norm]
\label{exdeg}
\upshape The degenerate distribution concentrated at a $d$-dimensional vector $\bfc=(c_1,\ldots,c_d)>\bfzero$ is characterized by the $F$-norm
$$
\norm{\bfx}_F = \max(|x_0|,c_1|x_1|,\ldots,c_d|x_d|).
$$
In particular, the standard sup-norm $\norm{\bfx}_\infty := \max_{0\le i\le d}\abs{x_i}$ on $\R^{d+1}$ is an $F$-norm which characterizes the constant rv $(1,\dots,1)\in\R^d$.
\end{exam}

\begin{exam}[Bernoulli $F$-norm]
\label{exber}
\upshape The Bernoulli distribution with parameter $p\in (0,1)$ is characterized by the bivariate $F$-norm
$$
\norm{(x_0,x_1)}_F = (1-p) |x_0| + p\max(|x_0|,|x_1|).
$$
\end{exam}

\begin{exam}[Uniform $F$-norm]
\label{exunif}
\upshape The uniform distribution on $(0,1)$ is characterized by the bivariate $F$-norm
\[
\norm{(x_0,x_1)}_F=\begin{cases}
|x_0|,&\text{if } |x_1| \le |x_0|,\\
|x_0| + \displaystyle\int_{|x_0|}^{|x_1|} \left( 1-\dfrac{t}{|x_1|} \right) dt = \dfrac{x_0^2+x_1^2}{2|x_1|}, &\text{if }|x_1|>|x_0|.
\end{cases}
\]
\end{exam}

\begin{exam}[Exponential $F$-norm]
\label{exexpo}
\upshape The exponential distribution with mean $1/\lambda$, $\lambda>0$ is characterized by the bivariate $F$-norm
$$
\norm{(x_0,x_1)}_F = |x_0| + \int_{|x_0|}^\infty \exp\left(-\lambda \frac{t}{|x_1|} \right)\,dt= |x_0| + \frac{|x_1|}{\lambda}\exp\left(-\lambda\frac{|x_0|}{|x_1|} \right)
$$
when $x_1\neq 0$, and $|x_0|$ otherwise.
\end{exam}

\begin{exam}[Pareto $F$-norm]
\label{expareto}
\upshape The Pareto distribution with tail index $\gamma\in (0,1)$, having df $F(t)=1-t^{-1/\gamma}$, $t\geq 1$, is characterized by the bivariate $F$-norm
\begin{eqnarray*}
\norm{(x_0,x_1)}_F &=& |x_0| + \int_{|x_0|}^\infty \left[ \indic_{\{ t\leq |x_1|\}} + \left( \frac{t}{|x_1|} \right)^{-1/\gamma} \indic_{\{ t>|x_1|\}} \right] \,dt \\[5pt]
				   &=& \begin{cases} |x_0| & \mbox{if } |x_1| = 0, \\[5pt] \displaystyle |x_0| \left[ 1+\frac{\gamma}{1-\gamma} \left( \frac{|x_0|}{|x_1|} \right)^{-1/\gamma} \right] & \mbox{if } 0< |x_1|\leq |x_0|, \\[5pt] \displaystyle |x_1| \frac{1}{1-\gamma} & \mbox{if } |x_1|> |x_0|. \end{cases}
\end{eqnarray*}
\end{exam}

We now explore some simple properties of $F$-norms. Each $F$-norm induces, as a norm, a continuous function on $\R^{d+1}$. It takes the value $1$ at $(1,0,\ldots,0)$. It also defines a radially symmetric function, {\it i.e.}
\[
\forall \bfx\in\R^{d+1}, \ \norm{\bfx}_F = \norm{\abs{\bfx}}_F, \ \mbox{ with } \ \abs{\bfx} = (|x_0|,|x_1|,\ldots,|x_d|).
\]
The norm $\norm\cdot_F$ is, therefore, determined by its values on $[0,\infty)^{d+1}$. Additionally, any $F$-norm defines a monotone norm on $\R^{d+1}$ in the sense that
\[
\bfzero\le\bfx\le\bfy\Rightarrow \norm{\bfx}_F\le\norm{\bfy}_F.
\]
These properties make it possible, in some cases, to show that certain norms are not $F$-norms:
\begin{itemize}
\item the norm $\norm\cdot := 2\norm\cdot_\infty$ is not an $F$-norm because $\norm{(1,0,\ldots,0)} = 2$,
\item for any $\delta\in (0,1)$, the matrix
$$
M=\left( \begin{array}{cc} 1 & -\delta \\ -\delta & 1 \end{array} \right)
$$
is symmetric and positive definite, and therefore induces the norm
$$
\norm{(x_0,x_1)}_{\delta} := \left[ (x_0,x_1) M (x_0,x_1)^{\top} \right]^{1/2} = \sqrt{x_1^2- 2 \delta x_1 x_2+x_2^2}.
$$
This norm is not radially symmetric, as
$$
\norm{(1,-1)}_{\delta} = \sqrt{2}\sqrt{1+\delta} \neq \sqrt{2}\sqrt{1-\delta} = \norm{(1,1)}_{\delta}.
$$
It is actually not monotone either, since
$$
(1,0) \leq (1,\delta) \ \mbox{ but } \ \norm{(1,0)}_{\delta} = 1 > \sqrt{1-\delta^2} = \norm{(1,\delta)}_{\delta}.
$$
The norm $\norm{\cdot}_{\delta}$ therefore cannot be an $F$-norm.
\end{itemize}
We close this section by providing results to identify those norms which are $F$-norms. Let us highlight first that for any norm $\norm\cdot$ on $\R^{d+1}$ and any $\bfx\in \R^d$, the function $t\mapsto \norm{(t,\bfx)}$ is convex on $[0,\infty)$ (and right-continuous at 0), and therefore automatically absolutely continuous on this interval~\citep[see {\it e.g.}][]{rock70}. With this in mind, we have the following result.
\begin{theorem}
\label{theoclassif}
A norm $\norm\cdot$ on $\mathbb{R}^{d+1}$ is an $F$-norm if and only if the following two conditions hold:
\begin{itemize}
\item[(i)] it is radially symmetric,
\item[(ii)] there exists a rv $\bfX = (X_1,\ldots,X_d)$ which satisfies $(\mathcal{H})$ such that for any $x_1,\ldots,x_d>0$, the Lebesgue derivative of $t\mapsto \norm{(t,1/x_1,\ldots,1/x_d)}$ is equal to $P(X_1\leq tx_1,\ldots,X_d\leq tx_d)$ almost everywhere, and
$$
\norm{\left( 0,\frac{1}{x_1},\ldots,\frac{1}{x_d} \right)} = E\left( \max\left( \frac{X_1}{x_1},\ldots,\frac{X_d}{x_d} \right) \right).
$$
\end{itemize}
In that case then $\norm\cdot =\norm\cdot_F$ with $F$ being the df of $\bfX$.
\end{theorem}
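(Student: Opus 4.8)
The plan is to prove both implications, and the key tool in each direction is the fundamental formula of Lemma~\ref{lemfund} together with the observation recorded just before the statement: for any norm the section $t\mapsto\norm{(t,\bfx)}$ is convex on $[0,\infty)$ and hence absolutely continuous there.

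For the necessity, I would suppose $\norm\cdot=\norm\cdot_F$ for some $\bfX$ satisfying $(\mathcal{H})$. Condition (i) is exactly the radial symmetry already established for $F$-norms. For (ii), I fix $x_1,\dots,x_d>0$ and apply Lemma~\ref{lemfund} to the point $(t,1/x_1,\dots,1/x_d)$; since $1/|1/x_i|=x_i$, this gives, for $t\ge 0$,
$$
\norm{(t,1/x_1,\dots,1/x_d)}_F = t+\int_t^\infty\bigl[1-F(sx_1,\dots,sx_d)\bigr]\,ds.
$$
Differentiating in $t$ then produces the Lebesgue derivative $F(tx_1,\dots,tx_d)=P(X_1\le tx_1,\dots,X_d\le tx_d)$ almost everywhere, while setting $x_0=0$ in the defining formula~\eqref{defn:definition of F-norm} yields the stated boundary value $E(\max(X_1/x_1,\dots,X_d/x_d))$.

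For the sufficiency, I would assume (i) and (ii) and let $F$ be the df of the postulated $\bfX$. Fixing $x_1,\dots,x_d>0$, absolute continuity of $t\mapsto\norm{(t,1/x_1,\dots,1/x_d)}$ lets me recover it as its value at $0$ plus the integral of its Lebesgue derivative; by (ii) this reads
$$
\norm{(t,1/x_1,\dots,1/x_d)} = E\!\left(\max\!\left(\frac{X_1}{x_1},\dots,\frac{X_d}{x_d}\right)\right)+\int_0^t F(sx_1,\dots,sx_d)\,ds.
$$
The only computation is to check that this coincides with the expression for $\norm{(t,1/x_1,\dots,1/x_d)}_F$ from Lemma~\ref{lemfund}; this follows from the tail-integral identity $\int_0^\infty[1-F(sx_1,\dots,sx_d)]\,ds=E(\max(X_1/x_1,\dots,X_d/x_d))$ for the nonnegative variable $\max_i X_i/x_i$, after splitting $\int_t^\infty=\int_0^\infty-\int_0^t$. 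Hence $\norm\cdot$ and $\norm\cdot_F$ agree at every $(t,1/x_1,\dots,1/x_d)$ with $t\ge 0$ and $x_i>0$.

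It then remains to upgrade this to equality on all of $\R^{d+1}$. Since the points just described are dense in $[0,\infty)^{d+1}$ and both norms are continuous, they coincide on $[0,\infty)^{d+1}$; radial symmetry of $\norm\cdot$ (condition (i)) and of $\norm\cdot_F$ extends the equality to the whole of $\R^{d+1}$, giving $\norm\cdot=\norm\cdot_F$ and the final identification with the $F$-norm of $F$. I expect no serious obstacle: the crux is simply the reconciliation of the two representations via the tail-integral identity above, which is what makes the boundary-value-plus-derivative reconstruction line up with Lemma~\ref{lemfund}; the density and continuity argument needed to pass from the open region to all of $\R^{d+1}$ is routine.
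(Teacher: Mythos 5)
Your proof is correct and follows essentially the same route as the paper: both directions rest on the representation from Lemma~\ref{lemfund}, the absolute continuity of the convex section $t\mapsto\norm{(t,1/x_1,\dots,1/x_d)}$, and the tail-integral identity $\int_0^\infty[1-F(sx_1,\dots,sx_d)]\,ds=E(\max_i X_i/x_i)$, followed by the same continuity-plus-radial-symmetry step to extend the equality from $(0,\infty)^{d+1}$ to all of $\R^{d+1}$. The only cosmetic difference is that you reconstruct $\norm{(t,\cdot)}$ by integrating the derivative up from $t=0$ and then reconcile with Lemma~\ref{lemfund} by splitting $\int_t^\infty=\int_0^\infty-\int_0^t$, whereas the paper adds and subtracts $\int_0^\infty[1-F]$ in one display; these are the same computation.
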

\begin{proof} That any $F$-norm satisfies~(i) is obvious, while~(ii) is a clear consequence of Lemma~\ref{lemfund}, reformulated as
$$
\norm{(t,1/x_1,\ldots,1/x_d)}_F = t + \int_t^{\infty} \left[ 1- P\left( X_1\leq u x_1,\ldots,X_d\leq u x_d \right) \right] du
$$
when $\bfX$ has df $F$.

Conversely, let $\norm\cdot$ satisfy~(i) and~(ii). Since $\norm\cdot$ and $\norm{\cdot}_F$ are continuous, as well as radially symmetric by~(i), we only need to show that
$
\norm{\bfx} = \norm{\bfx}_F
$
for all $\bfx>\bfzero$. Pick such an $\bfx$ and write it as $\bfx=(t,1/x_1,\ldots,1/x_d)$, for $t,x_1,\ldots,x_d>0$. Write then, by absolute continuity,
\begin{eqnarray*}
\! \! \! \! & & \! \! \! \! \norm{(t,1/x_1,\ldots,1/x_d)} - \norm{(0,1/x_1,\ldots,1/x_d)} \\
\! \! \! \! &=& \! \! \! \! t - \! \int_0^t [1-P(X_1\leq u x_1,\ldots,X_d\leq u x_d)] \, du \\
\! \! \! \! &=& \! \! \! \! t + \! \int_{t}^{\infty} [1-P(X_1\leq u x_1,\ldots,X_d\leq u x_d)] \, du - E\left( \! \max\left( \frac{X_1}{x_1},\ldots,\frac{X_d}{x_d} \right) \! \right).
\end{eqnarray*}
Applying Lemma~\ref{lemfund} and noting that by (ii),
$$
\norm{\left( 0,\frac{1}{x_1},\ldots,\frac{1}{x_d} \right)} = E\left( \max\left( \frac{X_1}{x_1},\ldots,\frac{X_d}{x_d} \right) \right),
$$
concludes the proof.
\end{proof}
Although this result is hard to apply in arbitrary dimensions due to the high-level condition~(ii), it admits the following simple corollary in two dimensions.
\begin{cor}
\label{corclassif}
A norm $\norm\cdot$ on $\mathbb{R}^2$ is an $F$-norm if and only if the following two conditions hold:
\begin{itemize}
\item[(i)] it is radially symmetric,
\item[(ii)] the Lebesgue derivative of $t\mapsto \norm{(t,1)}$ is almost everywhere equal to a univariate df $F$ on $[0,\infty)$ with a finite first moment equal to $\norm{(0,1)}$.
\end{itemize}
In that case then $\norm\cdot =\norm\cdot_F$.
\end{cor}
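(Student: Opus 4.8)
The plan is to read off this two-dimensional statement from Theorem~\ref{theoclassif} specialised to $d=1$. The subtlety is that condition~(ii) of Theorem~\ref{theoclassif} asks the derivative identity and the moment identity to hold for \emph{every} $x_1>0$, whereas Corollary~\ref{corclassif}(ii) only records them at the single value $x_1=1$. First I would argue that, in dimension one, homogeneity of the norm propagates the value $x_1=1$ to all $x_1>0$, so that the two conditions are in fact equivalent; granting this, the corollary is immediate from Theorem~\ref{theoclassif}, together with Lemma~\ref{lemfund} for the converse direction.

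For the propagation step, set $g(s):=\norm{(s,1)}$, a convex and hence absolutely continuous function on $[0,\infty)$. Homogeneity gives $\norm{(t,1/x_1)}=x_1^{-1}\norm{(tx_1,1)}=x_1^{-1}g(tx_1)$ for every $x_1>0$, so the Lebesgue derivative of $t\mapsto\norm{(t,1/x_1)}$ equals $g'(tx_1)$ almost everywhere. If $g'=F$ a.e.\ for a df $F$ on $[0,\infty)$, as in Corollary~\ref{corclassif}(ii), then for each fixed $x_1>0$ we obtain $g'(tx_1)=F(tx_1)=P(X_1\le tx_1)$ for almost every $t$, where $X_1$ has df $F$; this is exactly the derivative part of Theorem~\ref{theoclassif}(ii). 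The moment part follows similarly: homogeneity yields $\norm{(0,1/x_1)}=x_1^{-1}\norm{(0,1)}=x_1^{-1}E(X_1)=E(X_1/x_1)$, using that Corollary~\ref{corclassif}(ii) identifies $\norm{(0,1)}$ with the first moment of $F$.

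It then remains to check that $X_1$ satisfies $(\mathcal{H})$ before invoking Theorem~\ref{theoclassif}. Nonnegativity is built into $F$ being a df on $[0,\infty)$; finiteness $E(X_1)<\infty$ is the finite-first-moment assumption; and $E(X_1)=\norm{(0,1)}>0$ because $(0,1)\neq\bfzero$ and $\norm\cdot$ is a norm, which also supplies $P(X_1>0)>0$. Theorem~\ref{theoclassif} then gives both that $\norm\cdot$ is an $F$-norm and that $\norm\cdot=\norm\cdot_F$. For the converse, any $F$-norm is radially symmetric, and writing $\norm{(t,1)}_F=t+\int_t^\infty[1-F(u)]\,du$ via Lemma~\ref{lemfund} and differentiating shows its Lebesgue derivative is $F$, while $\norm{(0,1)}_F=\int_0^\infty[1-F(u)]\,du=E(X_1)$ is the first moment; this is~(ii).

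The one genuinely technical point, and the place I would be most careful, is the handling of ``almost everywhere'' under the rescaling $s=tx_1$: since $x_1>0$, this affine map sends Lebesgue-null sets to Lebesgue-null sets, so $g'=F$ a.e.\ in $s$ does transfer to $g'(tx_1)=F(tx_1)$ a.e.\ in $t$, and the chain-rule computation $\tfrac{d}{dt}\,x_1^{-1}g(tx_1)=g'(tx_1)$ is legitimate precisely because $g$ is absolutely continuous (as a convex function), as noted just before Theorem~\ref{theoclassif}. Everything else is bookkeeping between the $x_1=1$ slice and the general $x_1>0$ slice.
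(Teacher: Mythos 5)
Your argument is correct and follows exactly the route the paper intends: the paper states Corollary~\ref{corclassif} without proof as an immediate specialisation of Theorem~\ref{theoclassif} to $d=1$, and your homogeneity argument ($\norm{(t,1/x_1)}=x_1^{-1}g(tx_1)$, with null sets preserved under the scaling $s=tx_1$) is precisely the bookkeeping needed to pass from the single slice $x_1=1$ in the corollary to the all-$x_1>0$ condition of the theorem. The verification of $(\mathcal{H})$ and the converse via Lemma~\ref{lemfund} are likewise correct.
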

\begin{exam}[On the $L^1$-norm]
\upshape The $L^1$-norm $\norm{(x_0,x_1)} = |x_0| + |x_1|$ on $\R^2$ is not an $F$-norm. Indeed, we have
$$
\dfrac{d}{dt} (\norm{(t,1)}) = 1, \ t>0,
$$
which does not define a df on $[0,\infty)$ having a (strictly) positive first moment.
\end{exam}
\begin{exam}[On the $L^p$-norm]
\upshape
Each $L^p$-norm $\norm{(x_0,x_1)}_p = (|x_0|^p+|x_1|^p)^{1/p}$ on $\R^2$, with $1<p<\infty$, is an $F$-norm. Indeed, it is clearly radially symmetric and
$$
\frac{d}{dt}\left( \norm{(t,1)}_p \right) = \frac{d}{dt}\left( (t^p + 1)^{1/p} \right) = (1+t^{-p})^{1/p - 1}, \ t>0,
$$
which defines the df of a Burr type III distribution in the sense of~\citet[Table 2.1]{beigoesegteu2004}. This distribution, for $p>1$, has a finite first moment.
\end{exam}
Even though providing a simple characterization of $F$-norms in arbitrary dimensions appears to be a difficult problem, there is a simple inversion formula inspired by Theorem~\ref{theoclassif} that makes it possible to go from an $F$-norm to its pertaining df. This is the focus of the following result, which can also be used to check that a norm is {\it not} an $F$-norm. Its proof is a straightforward consequence of Lemma~\ref{lemfund} and right-continuity of the df $F$.
\begin{cor}
\label{corinversion}
Let $\norm\cdot_F$ be an $F$-norm. Then, for any $x_1,\ldots,x_d>0$, the right-derivative of the function $t\mapsto \norm{(t,1/x_1,\ldots,1/x_d)}_F$ at $t=1$ exists and is $F(x_1,\ldots,x_d)$.
\end{cor}
\begin{exam}[On the $L^1$-norm again]
\label{exL1}
\upshape The $L^1$-norm 
$$
\norm{(x_0,x_1,\ldots,x_d)} = \sum_{i=0}^d |x_i|
$$ 
on $\R^{d+1}$ is not an $F$-norm. Indeed, we have, for any $x_1,\ldots,x_d>0$,
$$
\dfrac{d}{dt} (\norm{(t,1/x_1,\ldots,1/x_d)}) = 1, \ t>0,
$$
which defines the df of the degenerate vector $(0,\ldots,0)$. This distribution does not have strictly positive marginal moments and thus, by Corollary~\ref{corinversion}, $\norm\cdot$ cannot be an $F$-norm.
\end{exam}
\subsection{$F$-norms and $D$-norms}
\label{FnormDnorm}
$F$-norms are related to {\it $D$-norms}~\citep{falk2019}, which are defined as follows. Let $\bfZ=(Z_1,\dots,Z_d)$ be a componentwise nonnegative rv such that $E(Z_i)=1$, $1\le i\le d$. Then, by the arguments of Lemma~\ref{lemXnorm}, the quantity
$
\norm{\bfx}_D:= E\left(\max(\abs{x_1} Z_1,\ldots,\abs{x_d} Z_d) \right)
$
defines a norm on $\R^d$, called {\it $D$-norm}. The concept of $D$-norms has come to prominence recently for its importance in multivariate extreme value theory, not least because it allows for a simple characterization of max-stable dfs~\citep[Theorem 2.3.3]{falk2019}. The following example, which constructs the $F$-norm of a max-stable distribution, illustrates this further.
\begin{exam}[Max-stable $F$-norm]
\upshape
Let $G$ be a max-stable df on $\R^d$ with identical Fr\'{e}chet margins $G_i(x)=\exp\left(-x^{-p}\right)$, $x>0$, $p>1$. By \citet[Theorem 2.3.4]{falk2019} there exists a $D$-norm $\norm\cdot_D$ on $\R^d$ such that
\[
\forall \bfx=(x_1,\dots,x_d)>\bfzero, \ G(\bfx)=\exp\left(-\norm{\frac{\bfone}{\bfx^p}}_D \right).
\]
Recall that all operations on vectors are meant componentwise. Let the rv $\bfxi=(\xi_1,\dots,\xi_d)$ follow this df $G$. Apply Lemma~\ref{lemfund} to find that the $F$-norm on $\R^{d+1}$ induced by $G$ satisfies, for $(x_0,\dots,x_d)>\bfzero\in\R^{d+1}$,
\begin{align*}
\norm{(x_0,\dots,x_d)}_G &= x_0 + \int_{x_0}^\infty \left[ 1-\exp\left(-\frac{\norm{\bfx^p}_D}{t^p}\right) \right] \,dt\\
&= x_0 + \norm{\bfx^p}_D^{1/p}\int_{x_0/\norm{\bfx^p}_D^{1/p}}^{\infty} \left[ 1-\exp\left(-\frac 1{t^p} \right) \right] \,dt\\
&= \norm{\left(x_0,\norm{\bfx^p}_D^{1/p}\right)}_{F_p}
\end{align*}
where $\norm\cdot_{F_p}$ is the bivariate $F$-norm associated to the univariate Fr\'{e}chet df $F_p(x)=\exp\left(-x^{-p}\right)$, $x>0$, $p>1$. In particular, if $\xi_1,\dots,\xi_d$ are independent, we obtain $\norm\cdot_D=\norm\cdot_1$ and, thus,
\[
\norm{(x_0,\dots,x_d)}_G =  \norm{\left(x_0,\norm{\bfx}_p\right)}_{F_p}.
\]
\end{exam}
A consequence of Theorem~\ref{theo:F-norm characterizes distribution} is that the distribution of the {\it generator} $\bfZ$ of a $D$-norm whose first component is equal to 1 is characterized by this $D$-norm; this was already observed by \citet[Lemma 1.1]{falkst16} and led therein to the introduction of the max-CF as defined in~\eqref{defn:definition of max-CF}. Unlike for $F$-norms, however, the distribution of the generator of a $D$-norm is in general not uniquely determined. This is most easily seen through the following characterization of the set of generators of the sup-norm.
\begin{prop}
\label{propsupnorm}
The sup-norm $\norm{\cdot}_{\infty}$ is a $D$-norm, and $\bfZ$ generates $\norm{\cdot}_{\infty}$ as a $D$-norm if and only if $\bfZ = X(1,\ldots,1)$ a.s., where $X$ is a nonnegative rv having expectation 1.
\end{prop}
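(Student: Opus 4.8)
The plan is to dispatch the ``if'' direction (together with the assertion that $\norm\cdot_\infty$ is itself a $D$-norm) by a one-line computation, and to obtain the ``only if'' direction from a single well-chosen evaluation of the defining expectation.

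First I would treat sufficiency. If $\bfZ = X(1,\ldots,1)$ a.s. with $X\ge 0$ and $E(X)=1$, then since $X\ge 0$ it factors out of the maximum, so that for every $\bfx\in\R^d$,
$$
E\left(\max(\abs{x_1}Z_1,\ldots,\abs{x_d}Z_d)\right) = E\left(X\max_{1\le i\le d}\abs{x_i}\right) = E(X)\,\norm{\bfx}_\infty = \norm{\bfx}_\infty,
$$
the constant $\max_{1\le i\le d}\abs{x_i}$ being pulled out of the expectation. Taking $X\equiv 1$ exhibits $(1,\ldots,1)$ as a legitimate generator, its components being nonnegative with expectation $1$, so $\norm\cdot_\infty$ is indeed a $D$-norm; the general computation then shows that every $\bfZ$ of the stated form generates it.

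Next I would prove necessity. Suppose $\bfZ=(Z_1,\ldots,Z_d)$ is a generator, so each $Z_i\ge 0$ with $E(Z_i)=1$, and assume its $D$-norm equals $\norm\cdot_\infty$. Evaluating the defining identity at $\bfx=\bfone=(1,\ldots,1)$ gives
$$
E\left(\max_{1\le i\le d} Z_i\right) = \norm{\bfone}_\infty = 1.
$$
Writing $M:=\max_{1\le i\le d}Z_i$, for each fixed $i$ the random variable $M-Z_i$ is nonnegative (by definition of the maximum), while $E(M-Z_i)=1-1=0$; hence $M=Z_i$ a.s. As this holds for every $i$, we obtain $Z_1=\cdots=Z_d=M$ a.s. Setting $X:=Z_1$ then yields $\bfZ=X(1,\ldots,1)$ a.s. with $X\ge 0$ and $E(X)=1$, as required.

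The argument carries no real obstacle: the only point that demands care is the final deduction, where one invokes that a nonnegative random variable with zero mean vanishes almost surely, applied to $M-Z_i$ for each component separately. The one genuinely clever step is the choice of the test vector $\bfone$, which converts the norm equality into the extremal relation $E(\max_i Z_i)=E(Z_i)$ that forces all components to coincide.
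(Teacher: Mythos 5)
Your proof is correct and follows essentially the same route as the paper: sufficiency by factoring $X$ out of the maximum, and necessity by evaluating at $\bfone$ to get $E(\max_i Z_i)=1=E(Z_i)$ and concluding that each $Z_i$ equals the maximum almost surely. You merely make explicit the final step (a nonnegative random variable with zero mean vanishes a.s.) that the paper leaves implicit.
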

\begin{proof} That any such rv generates $\norm{\cdot}_{\infty}$ follows from the identity
\begin{eqnarray*}
E\left(\max(\abs{x_1} X,\ldots,\abs{x_d} X) \right) &=& E\left(X \max(\abs{x_1},\ldots,\abs{x_d}) \right) \\
													&=& \max(\abs{x_1},\ldots,\abs{x_d}).
\end{eqnarray*}
Conversely, suppose that $\bfZ=(Z_1,\dots,Z_d)$ is componentwise nonnegative, satisfies $E(Z_i)=1$, $1\le i\le d$, and
$$
E\left(\max(\abs{x_1} Z_1,\ldots,\abs{x_d} Z_d) \right) = \max(\abs{x_1},\ldots,\abs{x_d})
$$
for any $x_1,\ldots,x_d$. With $x_1=\ldots=x_d=1$, this gives
$$
E\left(\max(Z_1,\ldots,Z_d) \right) = 1 = E(Z_i), \ \forall i\in \{ 1,\ldots,d\}.
$$
It follows that for any $1\leq i\leq d$, $Z_i=\max(Z_1,\ldots,Z_d)=: X$ a.s., concluding the proof.
\end{proof}
Clearly, any $F$-norm on $\R^{d+1}$ induces a $D$-norm on $\R^{d+1}$ with the first element of the generator being the constant 1, in the sense that if $\bfX=(X_1,\ldots,X_d)$ generates an $F$-norm $\norm{\cdot}_F$, the quantity
$$
\norm{\left(x_0, x_1,\ldots, x_d \right)} := \norm{\left(x_0, \frac{x_1}{E(X_1)},\ldots, \frac{x_d}{E(X_d)} \right)}_F
$$
defines a $D$-norm generated by $(1,X_1/E(X_1),\ldots,X_d/E(X_d))$. In particular, if $E(X_i)=1$ for any $1\leq i\leq d$, any $F$-norm is also a $D$-norm.

There are however $D$-norms which are not $F$-norms. The $L^1-$norm $\norm\cdot_1$ is a prominent example: we know from Example~\ref{exL1} that it is not an $F$-norm, although it is generated by a random permutation of the vector $(d+1,0,\dots,0)\in\R^{d+1}$ and is therefore a $D$-norm. We can actually deduce this from the following stronger result. We omit its elementary proof.
\begin{prop}
\label{propbounds}
Let $\bfX=(X_1,\ldots,X_d)$ be a rv satisfying $(\mathcal{H})$ and $\norm{\cdot}_F$ be the corresponding $F$-norm. For any $\bfx\in \R^{d+1}$, we have the bounds
$$
\max(\abs{x_0},\abs{x_1}E(X_1),\ldots,\abs{x_d}E(X_d)) \leq \norm{\bfx}_F\leq \abs{x_0} + \sum_{i=1}^d \abs{x_i}E(X_i).
$$
The upper bound is always strict if both $x_0$ and at least one of the $x_i$ $(1\leq i\leq d)$ are nonzero.
\end{prop}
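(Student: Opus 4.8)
The plan is to argue directly from the defining formula \eqref{defn:definition of F-norm}, using only elementary pointwise inequalities for the maximum of finitely many nonnegative numbers together with monotonicity and linearity of expectation.

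For the lower bound, I would observe that for every realization of $\bfX$ the quantity $\max(\abs{x_0},\abs{x_1}X_1,\ldots,\abs{x_d}X_d)$ dominates each of its arguments. Taking expectations yields simultaneously $\norm{\bfx}_F\geq \abs{x_0}$ and $\norm{\bfx}_F\geq \abs{x_i}E(X_i)$ for every $1\leq i\leq d$; taking the maximum over these $d+1$ bounds gives the left-hand inequality. For the upper bound I would use that the maximum of finitely many nonnegative numbers never exceeds their sum, so that pointwise
$$
\max(\abs{x_0},\abs{x_1}X_1,\ldots,\abs{x_d}X_d)\leq \abs{x_0}+\sum_{i=1}^d \abs{x_i}X_i.
$$
Taking expectations and using linearity together with $E(X_i)<\infty$ from $(\mathcal{H})$ produces the right-hand inequality.

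The only point requiring slightly more care is the strictness claim. Here I would consider the nonnegative random variable
$$
\Delta := \abs{x_0}+\sum_{i=1}^d \abs{x_i}X_i - \max(\abs{x_0},\abs{x_1}X_1,\ldots,\abs{x_d}X_d),
$$
whose expectation is exactly the gap between the two sides of the upper bound, and show $E(\Delta)>0$ whenever $x_0\neq 0$ and $x_j\neq 0$ for some $j\in\{1,\ldots,d\}$. The key observation is that, for nonnegative reals, the inequality $\max\leq\mathrm{sum}$ is strict as soon as at least two of the summands are strictly positive. On the event $\{X_j>0\}$ both $\abs{x_0}>0$ and $\abs{x_j}X_j>0$, so $\Delta>0$ there; since $(\mathcal{H})$ forces $E(X_j)>0$ with $X_j\geq 0$ a.s., this event has positive probability, whence $E(\Delta)>0$.

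I do not expect any genuine obstacle, which is consistent with the authors describing the proof as elementary: the only mild subtlety is the need, for the strictness part, to exhibit a positive-probability event on which two competing terms are simultaneously positive, and condition $(\mathcal{H})$ supplies exactly this through $P(X_j>0)>0$.
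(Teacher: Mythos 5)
Your proof is correct. The paper explicitly omits the proof of this proposition as ``elementary,'' and your argument is precisely the intended one: pointwise domination of each argument by the maximum for the lower bound, the inequality $\max\leq\mathrm{sum}$ for nonnegative terms for the upper bound, and, for strictness, the observation that on the positive-probability event $\{X_j>0\}$ (guaranteed by $E(X_j)>0$ together with $X_j\geq 0$ a.s.\ under $(\mathcal{H})$) two of the competing terms are simultaneously positive, so the nonnegative gap $\Delta$ has strictly positive expectation.
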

%
%
%
%
%
While the upper bound in Proposition~\ref{propbounds} is not an $F$-norm, the weighted sup-norm in the lower bound is, as we saw in Example~\ref{exdeg}. In the case $E(X_1) = \cdots = E(X_d) = 1$, this is just the standard sup-norm on $\R^{d+1}$; from Takahashi's characterization~\citep[see][Theorem 1.3.1]{falk2019}, we know that this norm is special within the class of $D$-norms, as it is completely characterized by its value at $(1,\dots,1)$:
$$
\norm{(1,\dots,1)}_D=1 \Leftrightarrow \norm\cdot_D=\norm\cdot_\infty.
$$
The following result gives a corresponding characterization, within the class of $F$-norms, for the weighted sup-norm appearing as the lower bound in Proposition~\ref{propbounds}.
\begin{prop}
\label{propcharinfty}
Let $\bfX=(X_1,\ldots,X_d)$ be a rv satisfying $(\mathcal{H})$ and $\norm{\cdot}_F$ be the corresponding $F$-norm. Define $c_i=E(X_i)$, for $1\leq i\leq d$, and introduce the weighted sup-norm
$$
\norm{(x_0,x_1,\ldots,x_d)}_{\infty,\bfc} := \max(\abs{x_0},\abs{x_1} c_1,\ldots,\abs{x_d} c_d).
$$
Then
$$
\norm{\left(1,\frac{1}{c_1},\ldots,\frac{1}{c_d} \right)}_F = 1 \Leftrightarrow \norm\cdot_F = \norm\cdot_{\infty,\bfc}.
$$
\end{prop}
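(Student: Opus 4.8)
The plan is to prove the two implications separately. The reverse implication $\norm\cdot_F = \norm\cdot_{\infty,\bfc} \Rightarrow \norm{(1,1/c_1,\ldots,1/c_d)}_F = 1$ is immediate: evaluating the weighted sup-norm at the point $(1,1/c_1,\ldots,1/c_d)$ gives $\max(1,(1/c_1)c_1,\ldots,(1/c_d)c_d)=\max(1,1,\ldots,1)=1$.

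For the forward implication, the idea is to show that the hypothesis forces $\bfX$ to be degenerate at $\bfc$, after which Example~\ref{exdeg} yields the conclusion. First I would unfold the definition~\eqref{defn:definition of F-norm} of the $F$-norm at $(1,1/c_1,\ldots,1/c_d)$, obtaining
$$
\norm{\left(1,\frac{1}{c_1},\ldots,\frac{1}{c_d}\right)}_F = E\left( \max\left( 1, \frac{X_1}{c_1},\ldots,\frac{X_d}{c_d} \right) \right).
$$
Setting $Y_i := X_i/c_i$, each $Y_i$ is nonnegative with $E(Y_i)=1$, and the hypothesis reads $E(\max(1,Y_1,\ldots,Y_d))=1$.

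The key step is then an almost sure equality argument. Since $\max(1,Y_1,\ldots,Y_d)\ge 1$ pointwise, a random variable that dominates the constant $1$ yet has expectation $1$ must equal $1$ almost surely; hence $\max(1,Y_1,\ldots,Y_d)=1$ a.s., so that $Y_i\le 1$ a.s.\ for every $i$. Combining $Y_i\le 1$ a.s.\ with $E(Y_i)=1$ (so that $1-Y_i\ge 0$ a.s.\ has zero mean) forces $Y_i=1$ a.s., i.e.\ $X_i=c_i$ a.s.\ for each $i$. Thus $\bfX$ is degenerate at $\bfc=(c_1,\ldots,c_d)>\bfzero$, and Example~\ref{exdeg} gives $\norm\cdot_F=\norm\cdot_{\infty,\bfc}$ directly.

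There is no real analytic obstacle here; the only point requiring care is the twofold use of the elementary fact that a nonnegative random variable with zero mean vanishes almost surely, applied first to $\max(1,Y_1,\ldots,Y_d)-1$ and then to $1-Y_i$. The conceptual content of the proof is the observation that attaining the lower bound of Proposition~\ref{propbounds} at the single point $(1,1/c_1,\ldots,1/c_d)$ is already enough to collapse the distribution of $\bfX$ onto a point mass.
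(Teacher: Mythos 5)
Your proof is correct, but it takes a genuinely different route from the paper's. The paper reduces the statement to Takahashi's characterization of the sup-norm within the class of $D$-norms: it rescales the generator to $(1,X_1/c_1,\ldots,X_d/c_d)$, observes that the hypothesis says the resulting $D$-norm takes the value $1$ at $(1,\ldots,1)$, invokes Takahashi's theorem to conclude that this $D$-norm is $\norm\cdot_\infty$, and translates back. You instead argue directly and elementarily: since $\max(1,Y_1,\ldots,Y_d)\ge 1$ pointwise and has expectation $1$, it equals $1$ a.s.; then $1-Y_i\ge 0$ with zero mean forces $Y_i=1$ a.s., so $\bfX$ is degenerate at $\bfc$ and Example~\ref{exdeg} finishes. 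Both arguments are sound and short. The paper's version fits the narrative of Section~\ref{FnormDnorm} by exhibiting the result as a corollary of $D$-norm theory (at the cost of citing Takahashi as a black box); your version is self-contained, uses only the elementary fact that a nonnegative random variable with zero mean vanishes a.s.\ (essentially the same mechanism as in the paper's own proof of Proposition~\ref{propsupnorm}), and makes explicit the stronger structural conclusion that the hypothesis collapses $\bfX$ to the point mass at $\bfc$.
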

\begin{proof} The norm
$$
\norm{(x_0,x_1,\ldots,x_d)}_{D} := E\left[\max\left(\abs{x_0}, \abs{x_1} \frac{X_1}{E(X_1)},\dots,\abs{x_d} \frac{X_d}{E(X_d)}\right) \right]
$$
is a $D$-norm on $\R^{d+1}$, and satisfies $\norm{(1,\dots,1)}_D=1$. By Takahashi's characterization, it follows that $\norm\cdot_{D} = \norm\cdot_{\infty}$. Conclude then by noting that
$$
\norm{(x_0,x_1,\ldots,x_d)}_F = \norm{(x_0,x_1 c_1,\ldots,x_d c_d)}_{D} = \norm{(x_0,x_1 c_1,\ldots,x_d c_d)}_{\infty}
$$
and $\norm{(x_0,x_1 c_1,\ldots,x_d c_d)}_{\infty} = \norm{(x_0,x_1,\ldots,x_d)}_{\infty,\bfc}$.
\end{proof}
Outside of these extreme cases, many $D$-norms are automatically $F$-norms. This is a consequence of the following result.

\begin{lemma}\label{lem:certain D-norms are F-norms}
Let $\norm\cdot_D$ be a $D$-norm on $\R^{d+1}$, with the additional property that it has a generator $\bfZ=(Z_0,\dots,Z_d)$ with $Z_i>0$, $0\le i\le d$. Then it also has a generator $\bfZ^*=(1,Z_1^*,\dots,Z_d^*)$.  
\end{lemma}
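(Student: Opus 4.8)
The plan is to produce the required generator by a change of the underlying probability measure, tilting by the first coordinate $Z_0$. Writing $(\Omega,\calA,P)$ for the probability space carrying $\bfZ=(Z_0,\dots,Z_d)$, so that each $Z_i\ge 0$, $E(Z_i)=1$, and $\norm{\bfx}_D=E(\max(\abs{x_0}Z_0,\dots,\abs{x_d}Z_d))$, I would use that $Z_0>0$ almost surely together with $E(Z_0)=1$ to define a new probability measure $Q$ on $(\Omega,\calA)$ via the density $\d Q/\d P=Z_0$, whose expectation operator satisfies $E_Q(h)=E(Z_0\,h)$. The candidate generator is then
$$
\bfZ^*:=\left(1,\frac{Z_1}{Z_0},\dots,\frac{Z_d}{Z_0}\right),
$$
which is well-defined $Q$-almost surely precisely because $Z_0>0$; note that only positivity of $Z_0$, not of all the $Z_i$, is actually needed here.

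I would next check that $\bfZ^*$ is a bona fide generator, that is, that its components are nonnegative with unit expectation under $Q$. Nonnegativity is immediate, and the expectations follow from $E_Q(h)=E(Z_0 h)$: for $1\le i\le d$ one gets $E_Q(Z_i/Z_0)=E(Z_0\cdot Z_i/Z_0)=E(Z_i)=1$, while for the leading coordinate $E_Q(1)=E(Z_0)=1$. Thus $\bfZ^*$ has the prescribed form $(1,Z_1^*,\dots,Z_d^*)$ with $Z_i^*:=Z_i/Z_0$.

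Finally I would verify that $\bfZ^*$ generates the very same $D$-norm. This rests on the elementary distributive identity, valid exactly because $Z_0>0$,
$$
Z_0\,\max\left(\abs{x_0},\abs{x_1}\frac{Z_1}{Z_0},\dots,\abs{x_d}\frac{Z_d}{Z_0}\right)=\max(\abs{x_0}Z_0,\abs{x_1}Z_1,\dots,\abs{x_d}Z_d),
$$
after which taking $E_Q$ of the left-hand side and using $E_Q(h)=E(Z_0 h)$ returns $\norm{\bfx}_D$ for every $\bfx\in\R^{d+1}$. The single creative step, and the only place I anticipate any real difficulty, is recognizing that tilting by $Z_0$ is the right device; once that is in hand the remaining points requiring care are merely the almost-sure well-definedness of the ratios $Z_i/Z_0$ and the distributive identity above, both of which hinge on strict positivity of $Z_0$. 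Since $\bfZ^*$ has constant first component, this in particular identifies $\norm\cdot_D$ with the $F$-norm generated by $(Z_1^*,\dots,Z_d^*)$, as the surrounding discussion requires.
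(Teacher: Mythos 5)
Your proof is correct and is essentially the paper's own argument written out in full: the paper sets up the angular set $S^*=\set{(1,s_1,\dots,s_d):\,s_i\ge 0}$ with radial function $R(\bfx)=x_0$ and then appeals to the normed generators theorem \citep[Theorem 1.7.1]{falk2019}, whose derivation is exactly the change of measure $\d Q/\d P=Z_0$ and the generator $\bfZ/Z_0$ that you construct. Your side remark that only $Z_0>0$ (not positivity of all components) is actually needed is accurate.
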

\begin{cor}
\label{cor:certain D-norms are F-norms}
Any $D$-norm on $\R^{d+1}$ having a componentwise positive generator is also an $F$-norm on $\R^{d+1}$.
\end{cor}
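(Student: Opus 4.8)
The plan is to obtain the corollary as an essentially immediate consequence of Lemma~\ref{lem:certain D-norms are F-norms}, whose conclusion supplies precisely the structural feature that distinguishes an $F$-norm from a general $D$-norm: a generator whose first coordinate is the deterministic constant $1$.

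First I would invoke Lemma~\ref{lem:certain D-norms are F-norms}. By hypothesis the given $D$-norm $\norm\cdot_D$ on $\R^{d+1}$ admits a componentwise positive generator $\bfZ=(Z_0,\ldots,Z_d)$, which is exactly the hypothesis of that lemma. The lemma therefore produces a further generator $\bfZ^*=(1,Z_1^*,\ldots,Z_d^*)$ of the \emph{same} $D$-norm, whose zeroth component is deterministically equal to $1$. Expressing the $D$-norm through this generator gives, for every $\bfx=(x_0,x_1,\ldots,x_d)\in\R^{d+1}$,
$$
\norm{\bfx}_D = E\left( \max\left( |x_0|, |x_1| Z_1^*, \ldots, |x_d| Z_d^* \right) \right),
$$
since the coefficient attached to $|x_0|$ is the constant $1$.

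It then remains only to recognise the right-hand side as an $F$-norm, which amounts to checking that $\bfX:=(Z_1^*,\ldots,Z_d^*)$ satisfies condition~$(\mathcal{H})$. This is immediate: as the components of a $D$-norm generator, each $Z_i^*$ is a.s.\ nonnegative and normalised so that $E(Z_i^*)=1$, whence $0<E(Z_i^*)<\infty$ for every $1\leq i\leq d$. Thus $\bfX$ meets $(\mathcal{H})$, and the displayed expression coincides with the $F$-norm $\norm\cdot_{\bfX}$ defined in~\eqref{defn:definition of F-norm}, so that $\norm\cdot_D$ is an $F$-norm as claimed.

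I do not anticipate any genuine obstacle here, as the entire mathematical content resides in Lemma~\ref{lem:certain D-norms are F-norms}; once its special generator is in hand, the corollary reduces to matching definitions. The only point requiring (trivial) care is the verification of $(\mathcal{H})$, in particular the strict positivity requirement $0<E(X_i)$, which follows automatically from the normalisation $E(Z_i^*)=1$ built into the very notion of a $D$-norm generator.
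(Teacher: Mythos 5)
Your proof is correct and follows exactly the route the paper intends: the corollary is stated as an immediate consequence of Lemma~\ref{lem:certain D-norms are F-norms}, and you simply spell out the deduction (pass to the generator $\bfZ^*=(1,Z_1^*,\dots,Z_d^*)$, note that $E(Z_i^*)=1$ gives condition $(\mathcal{H})$, and match the resulting expression with the definition of an $F$-norm). Nothing is missing.
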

A consequence of Corollary~\ref{cor:certain D-norms are F-norms} is that the $L^1$-norm $\norm\cdot_1$ on $\R^{d+1}$, which is not an $F$-norm, cannot have a $D$-norm generator $\bfZ=(Z_0,\dots,Z_d)$ with $Z_i>0$ for $0\le i\le d$. Another consequence is that the $L^p$-norm on $\R^{d+1}$, for $1<p<\infty$, is always an $F$-norm; see Proposition~1.2.1 in~\citet{falk2019}.
\begin{proof}[Proof of Lemma~\ref{lem:certain D-norms are F-norms}]
Let $S^*:=\set{(1,s_1,\dots,s_d):\,s_i\ge 0,\,1\le i\le d}$. The set $S^*$ is an {\it angular set}, in the sense that each $\bfx=(x_0,\dots,x_d)\in(0,\infty)^{d+1}$ can be represented as
\[
\bfx=x_0\left(1,\frac{x_1}{x_0},\dots,\frac{x_d}{x_0} \right)=: r\bfs,
\]
with $\bfs\in S^*$ and $r>0$ being uniquely determined. The radial function $R(\bfx):=x_0$ is positively homogeneous of order one. The assertion now follows by repeating the arguments in the derivation of the normed generators theorem in \citet[Theorem 1.7.1]{falk2019}.
\end{proof}
We close this section by highlighting an interesting connection between $F$-norms and $D$-norms in the context of multivariate extreme value theory. Recall that a multivariate df $F$ is said to belong to the domain of attraction of a multivariate max-stable distribution $G$ if there are sequences $(\bfa_n)$, $(\bfb_n)$, $\bfa_n>\bfzero$, with
$$
\forall \bfx\in\R^d, \ F^n(\bfa_n \bfx + \bfb_n) \to G(\bfx), \ \mbox{ as } n \to\infty.
$$
It also follows from a theorem of~\citet{skl59} that $F$ can be written
$$
F(\bfx) = C(F_1(x_1),\ldots,F_d(x_d))
$$
where $C$ is a copula function on $[0,1]^d$ ({\it i.e.} a df with standard uniform margins) and $F_i$ is the $i$th marginal distribution of $F$. By results of~\citet{deh84} and~\citet{gal87}, the above convergence is true if and only if it is true for the univariate margins of $F$, together with the following asymptotic expansion on the copula function $C$:
$$
C(\bfu) = 1-\norm{\bfone-\bfu}_D+\operatorname{o}(\norm{1-\bfu}_D) \ \mbox{ as } \bfu\uparrow \bfone.
$$
Here $\norm{\cdot}_D$ is a $D$-norm on $\R^d$ which describes the dependence structure in the limiting max-stable df $G$; for instance, if $G$ is standardized to have negative exponential margins, then $G(\bfx) = \exp(-\norm{\bfx}_D)$, $\bfx\leq \bfzero$. Of prime interest is the extremal coefficient $\norm{\bfone}_D$, which characterizes asymptotic dependence within the copula $C$:
\begin{itemize}
\item If $\norm{\bfone}_D = 1$, corresponding to $\norm{\cdot}_D = \norm{\cdot}_{\infty}$, then there is complete asymptotic dependence,
\item If $\norm{\bfone}_D = d$, corresponding to $\norm{\cdot}_D = \norm{\cdot}_1$, then there is asymptotic independence.
\end{itemize}
If $C$ is such a copula then it is the df of a vector with uniform marginal distributions, and thus one can naturally consider the $F$-norm it generates. The final result of this section shows that $\norm{\bfone}_D$ can be retrieved from the knowledge of this $F$-norm.
\begin{prop}
\label{linkDnormFnorm}
Let $C$ be a copula function on $[0,1]^d$ such that
$$
C(\bfu) = 1-\norm{\bfone-\bfu}_D+\operatorname{o}(\norm{\bfone-\bfu}_D) \ \mbox{ as } \bfu\uparrow \bfone
$$
where $\norm{\cdot}_D$ is some $D$-norm on $\R^d$. If $\norm{\cdot}_C$ is the $F$-norm on $\R^{d+1}$ corresponding to the copula $C$ then
$$
1-\norm{(x,1,\ldots,1)}_C = (1-x) - \frac{(1-x)^2}{2} \norm{\bfone}_D + \operatorname{o}((1-x)^2) \ \mbox{ as } x\uparrow 1.
$$
\end{prop}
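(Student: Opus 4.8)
The plan is to reduce everything to the fundamental formula of Lemma~\ref{lemfund} and then exploit homogeneity of the $D$-norm along the diagonal. First I would apply Lemma~\ref{lemfund} to the copula $C$ at the point $\bfx=(x,1,\ldots,1)$, which for $x\in(0,1)$ gives
$$
\norm{(x,1,\ldots,1)}_C = x + \int_x^\infty [1-C(t,\ldots,t)]\,dt.
$$
Since $C$ is a distribution function with uniform margins, $C(t,\ldots,t)=1$ whenever $t\ge 1$, so the integrand vanishes on $[1,\infty)$ and the integral reduces to one over $[x,1]$. Writing $1-x=\int_x^1 dt$ and subtracting, the constant $1$'s cancel and I obtain the clean identity
$$
1-\norm{(x,1,\ldots,1)}_C = \int_x^1 C(t,\ldots,t)\,dt.
$$
This converts the problem into a question about the behaviour of the diagonal section $t\mapsto C(t,\ldots,t)$ as $t\uparrow 1$.

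Next I would insert the assumed expansion of $C$. On the diagonal, $\bfone-(t,\ldots,t)=(1-t)\bfone$, so by positive homogeneity of $\norm{\cdot}_D$ (valid since $1-t>0$ for $t<1$) we have $\norm{\bfone-(t,\ldots,t)}_D=(1-t)\norm{\bfone}_D$. The hypothesis therefore yields
$$
C(t,\ldots,t) = 1 - (1-t)\norm{\bfone}_D + \operatorname{o}(1-t) \quad \text{as } t\uparrow 1,
$$
where the positive constant $\norm{\bfone}_D$ has been absorbed into the remainder term.

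Finally I would integrate term by term over $[x,1]$: the leading $1$ contributes $1-x$, the term linear in $(1-t)$ contributes $-\norm{\bfone}_D\int_x^1(1-t)\,dt=-\tfrac{(1-x)^2}{2}\norm{\bfone}_D$, and collecting these gives the claimed expansion. The one step requiring genuine care — and the main obstacle — is verifying that $\int_x^1\operatorname{o}(1-t)\,dt=\operatorname{o}((1-x)^2)$, which is not automatic from the pointwise little-$\mathrm{o}$ control and must be argued from the $\varepsilon$--$\delta$ definition. Given $\varepsilon>0$, I would pick $\delta>0$ so that the remainder is bounded in absolute value by $\varepsilon(1-t)$ once $1-t<\delta$; then for $1-x<\delta$ the integral of the remainder is bounded by $\varepsilon\int_x^1(1-t)\,dt=\varepsilon(1-x)^2/2$, which is precisely $\operatorname{o}((1-x)^2)$. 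Assembling the three contributions then completes the proof.
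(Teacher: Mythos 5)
Your proof is correct and follows essentially the same route as the paper's: both apply Lemma~\ref{lemfund} to reduce the claim to the integral of $1-C(t\bfone)$ over $[x,1]$, insert the assumed expansion of $C$ together with the homogeneity identity $\norm{(1-t)\bfone}_D=(1-t)\norm{\bfone}_D$, and integrate term by term. Your explicit $\varepsilon$--$\delta$ control of $\int_x^1\operatorname{o}(1-t)\,dt=\operatorname{o}((1-x)^2)$ is if anything more careful than the paper's brief appeal to dominated convergence.
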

\begin{proof} By Lemma~\ref{lemfund},
$$
\norm{(x,1,\ldots,1)}_C = x + \int_{x}^{\infty} [1-C(t \bfone)] \, dt= x + \int_x^1 [1-C(t \bfone)] \, dt.
$$
Using the dominated convergence theorem, this yields
\begin{align*}
\norm{(x,1,\ldots,1)}_C &= x + \int_x^1 [\norm{\bfone-t\bfone}_D+\operatorname{o}(\norm{\bfone-t\bfone}_D)] \, dt \\
						&= x + \norm{\bfone}_D \int_x^1 (1-t) \, dt + \operatorname{o}((1-x)^2) \ \mbox{ as } \ x\uparrow 1.
\end{align*}
Rearranging concludes the proof.
\end{proof}
Such a result opens the door to estimation procedures of the extremal coefficient $\norm{\bfone}_D$ based on estimation of $F$-norms. We deal more generally with convergence and sample versions of $F$-norms in the next section.
\section{Limiting behavior and estimation of $F$-norms}
\label{seclim}

Although the pointwise limit of a convergent sequence of $D$-norms is again a $D$-norm~\citep[see][Corollary 1.8.5]{falk2019}, this is no longer true for $F$-norms: for instance, if $(p_n)$ is a sequence of real numbers with $p_n>1$ and $p_n\downarrow 1$, then $\norm\cdot_{p_n}\to\norm\cdot_1$, and $\norm\cdot_{p_n}$ is for each $n$ an $F$-norm, but the limit $\norm\cdot_1$ is not.

However, if we ask that the limit is an $F$-norm, then we can relate the convergence of $F$-norms with convergence of distributions in the {\it Wasserstein metric}. Recall that the Wasserstein metric\index{Metric!Wasserstein} between two probability
distributions $P,Q$ on $\R^d$ with finite first moments in each component is
\begin{eqnarray*}
 & & d_W(P,Q) \\
 &:=& \inf\{E(\norm{\bfX-\bfY}_1)\!:\, \bfX\mathrm{\ has\ distribution\ }P, \ \bfY \mathrm{\ has\ distribution\ }Q\}.
\end{eqnarray*}
Convergence of probability measures $P_n$ to $P$ on $\R^d$ with respect to the
Wasserstein metric is equivalent to weak convergence together with
convergence of the moments
\[
\int_{\R^d} \norm{\bfx}_1\,P_n(d \bfx) \to \int_{\R^d}\norm{\bfx}_1\,P(d \bfx);
\]
see {\it e.g.}~\citet[][Definition 6.8 and Theorem 6.9]{villani09}. With this definition in mind, we can show the following result.
\begin{theorem}
\label{theoWass}
Pointwise convergence of a sequence of $F$-norms $\norm\cdot_{F_n}$ to an $F$-norm $\norm\cdot_{F}$ is equivalent to convergence of the sequence of distributions $F_n$ to $F$ in the Wasserstein metric.
\end{theorem}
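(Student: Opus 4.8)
The plan is to prove the equivalence by treating the two implications separately, using the representation $\norm{\bfx}_{F}=E(\max(\abs{x_0},\abs{x_1}X_1,\dots,\abs{x_d}X_d))$, the integral formula of Lemma~\ref{lemfund}, and the characterization of Wasserstein convergence recalled above as weak convergence together with convergence of first moments. The direction from Wasserstein convergence to pointwise convergence of $F$-norms is the routine one. Fixing $\bfx=(x_0,\dots,x_d)$, I would write $\norm{\bfx}_{F_n}=E(g(\bfX^{(n)}))$ with $g(\bfy)=\max(\abs{x_0},\abs{x_1}y_1,\dots,\abs{x_d}y_d)$, where $\bfX^{(n)}$ has df $F_n$. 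As a maximum of affine maps, $g$ is Lipschitz with respect to $\norm\cdot_1$ with constant $\max_{1\le i\le d}\abs{x_i}$, so by Kantorovich--Rubinstein duality $\abs{\norm{\bfx}_{F_n}-\norm{\bfx}_F}=\abs{E(g(\bfX^{(n)}))-E(g(\bfX))}$ is bounded by this constant times $d_W(F_n,F)$, which tends to $0$.

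The converse is the substantive part. Assume $\norm{\bfx}_{F_n}\to\norm{\bfx}_F$ for every $\bfx$, where $\norm\cdot_F$ is the $F$-norm of a df $F$ satisfying $(\mathcal{H})$. Convergence of first moments is immediate: taking $\bfx=\bfe_i$, the vector in $\R^{d+1}$ with $1$ in position $i$ and $0$ elsewhere (for $1\le i\le d$), gives $\norm{\bfe_i}_{F_n}=E(X_i^{(n)})$, so $E(X_i^{(n)})\to E(X_i)$ and hence $\int\norm{\bfx}_1\,P_n(d\bfx)=\sum_{i=1}^d E(X_i^{(n)})\to\sum_{i=1}^d E(X_i)=\int\norm{\bfx}_1\,P(d\bfx)$. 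For weak convergence, the key device is the one-dimensional slice: for fixed $\bfx=(x_1,\dots,x_d)>\bfzero$ the map $\psi_n(t):=\norm{(t,1/x_1,\dots,1/x_d)}_{F_n}$ is convex in $t$ as the restriction of a norm, and by Lemma~\ref{lemfund} its right derivative is $t\mapsto F_n(tx_1,\dots,tx_d)$, a univariate df in $t$. Since $\psi_n\to\psi$ pointwise with $\psi(t)=\norm{(t,1/x_1,\dots,1/x_d)}_F$ convex, pointwise convergence of convex functions forces convergence of their derivatives at every point where the limit is differentiable; hence $F_n(tx_1,\dots,tx_d)\to F(tx_1,\dots,tx_d)$ at every continuity point $t$ of the limiting univariate df. Specializing $t=1$ shows $F_n(\bfy)\to F(\bfy)$ at every continuity point $\bfy\in(0,\infty)^d$ of $F$, a set dense in $(0,\infty)^d$.

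To turn this into genuine weak convergence I would use tightness and identification rather than attempt to control all continuity points directly. The bound $E(X_i^{(n)})\to E(X_i)<\infty$ yields, via Markov's inequality, tightness of the marginals and hence of $(P_n)$ on $\R^d$. By Prokhorov's theorem it then suffices to show that any weak subsequential limit $Q$ equals $P$. For such a limit, $F_{n_k}$ converges to the df $G$ of $Q$ at continuity points of $G$; comparing with the previous step at common continuity points in $(0,\infty)^d$ gives $F=G$ on a dense subset of $(0,\infty)^d$, and right-continuity of the dfs of measures supported on $[0,\infty)^d$ upgrades this to $F=G$ everywhere, so $Q=P$. Thus $P_n\to P$ weakly, and together with the convergence of first moments this is exactly Wasserstein convergence $F_n\to F$.

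I expect the main obstacle to be precisely the passage from pointwise $F$-norm convergence to weak convergence of the underlying distributions. The integrand defining $\norm\cdot_F$ has only linear growth, so weak convergence cannot be read off directly; the crux is to exploit convexity of the one-dimensional slices to convert pointwise convergence of the norms into convergence of the multivariate dfs at continuity points. A secondary difficulty is the boundary of $[0,\infty)^d$, where the $X_i$ may carry atoms and where my slice argument gives no direct information; I would dispose of this through the tightness-and-identification step together with right-continuity, rather than by examining boundary continuity points one by one.
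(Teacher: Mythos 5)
Your proposal is correct, but it proves the substantive implication by a genuinely different, self-contained route. For the direction ``Wasserstein $\Rightarrow$ pointwise convergence of $F$-norms'' you and the paper do essentially the same thing: the paper couples $\bfX^{(n)}$ with $\bfX$ and bounds $|\norm{\bfx}_{F_n}-\norm{\bfx}_F|$ by $\norm{\bfx}_\infty\, E(\norm{\bfX^{(n)}-\bfX}_1)$ before taking the infimum over couplings, which is exactly the easy half of the Kantorovich--Rubinstein bound you invoke. The difference lies in the other direction: the paper disposes of it in one line by observing that pointwise convergence of $F$-norms implies pointwise convergence of the max-characteristic functions and then citing Theorem~2.1 of \citet{falkst16}, whereas you reprove that content from scratch. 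Your argument --- reading off first moments from $\norm{\bfe_i}_{F_n}=E(X_i^{(n)})$, exploiting convexity of the slices $t\mapsto\norm{(t,1/x_1,\dots,1/x_d)}_{F_n}$ so that pointwise convergence of convex functions yields convergence of the right derivatives $F_n(tx_1,\dots,tx_d)$ at differentiability points of the limit, and then a tightness-plus-identification step --- is sound; the convexity device is precisely the mechanism behind Corollary~\ref{corinversion}, and it makes transparent \emph{why} the $F$-norm determines the df well enough for weak convergence to follow. The only place requiring a little care is the final identification: you get $F=G$ on the complement of countably many hyperplanes $\{y_i=c\}$ in $(0,\infty)^d$, which does allow componentwise approach from above to every point, and you need $Q([0,\infty)^d)=1$ (Portmanteau for the closed set $[0,\infty)^d$) to handle points with a negative coordinate; both are standard and consistent with what you sketch. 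In short, the paper's proof is shorter because it leans on prior work on max-CFs; yours buys a self-contained argument at the cost of the convex-analysis and Prokhorov machinery.
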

\begin{proof} Pointwise convergence of $\norm\cdot_{F_n}$ to $\norm\cdot_{F}$ implies pointwise convergence of the sequence of max-CFs of $F_n$ (as defined in~\eqref{defn:definition of max-CF}) to the max-CF of $F$, which entails the desired convergence in the Wasserstein metric by~Theorem 2.1 in~\citet{falkst16}.

Conversely, if $F_n\to F$ in the Wasserstein metric, let $\bfX^{(n)}$ and $\bfX$ have dfs $F_n$ and $F$. For any $\bfx = (x_0,x_1,\ldots,x_d)\geq \bfzero$,
\begin{eqnarray*}
 & & \max(x_0,x_1 X_1^{(n)},\ldots,x_d X_d^{(n)}) \\
 &=& \max(x_0,x_1 [X_1 + (X_1^{(n)}-X_1)],\ldots,x_d [X_d + (X_d^{(n)}-X_d)]) \\
 &\leq & \max(x_0,x_1 X_1,\ldots,x_d X_d) + \max_{1\leq i\leq d} x_i |X_i^{(n)}-X_i|.
\end{eqnarray*}
An analogue inequality holds if we switch $\bfX^{(n)}$ and $\bfX$. We can then integrate to find
$$
| \norm{\bfx}_{F_n} - \norm{\bfx}_F | \leq \norm{\bfx}_{\infty} E\left( \norm{\bfX^{(n)} - \bfX}_1 \right).
$$
Since $\bfX^{(n)}$ and $\bfX$ were arbitrary rvs having dfs $F_n$ and $F$, this yields
\begin{equation}
\label{FnormLip}
| \norm{\bfx}_{F_n} - \norm{\bfx}_F | \leq \norm{\bfx}_{\infty} d_W(F_n,F)\to 0
\end{equation}
which concludes the proof.
\end{proof}
\noindent
Based on this result, as well as on our examples in Section~\ref{defi}, we can illustrate how the concept of $F$-norms can be used to prove convergence theorems. The following corollary focuses on the class of Pareto distributions and is an immediate consequence of Example~\ref{expareto} and Theorem~\ref{theoWass}. 
\begin{cor}
\label{corpareto}
Let $(\gamma_n)$ be a real-valued sequence with $0<\gamma_n<1$ for each $n$ and $\gamma_n\to \gamma\in (0,1)$. Let also, for each $n$, $F_n$ be the Pareto distribution with tail index $\gamma_n$, and $F$ be the Pareto distribution with tail index $\gamma$. Then $(F_n)$ converges to $F$ in the Wasserstein metric. 
\end{cor}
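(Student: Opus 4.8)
The plan is to invoke Theorem~\ref{theoWass}, which reduces the desired Wasserstein convergence to the pointwise convergence of the associated $F$-norms. The crucial point that makes this applicable is that the limiting tail index $\gamma$ lies in $(0,1)$, so that $F$ is genuinely a Pareto distribution of the type considered in Example~\ref{expareto}; consequently $\norm\cdot_F$ really is an $F$-norm, and not merely the pointwise limit of $F$-norms. This distinction is essential, as the discussion preceding Theorem~\ref{theoWass} shows (the sequence $\norm\cdot_{p_n}$ with $p_n\downarrow 1$ converges pointwise to $\norm\cdot_1$, which is not an $F$-norm). Thus it suffices to show that $\norm{(x_0,x_1)}_{F_n}\to\norm{(x_0,x_1)}_F$ for every $(x_0,x_1)\in\R^2$.

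For this I would use the explicit three-case formula of Example~\ref{expareto}. The key observation is that the case distinction there depends only on the comparison between $|x_1|$ and $|x_0|$, and not on the tail index. Hence, for a fixed point $(x_0,x_1)$, every norm $\norm{(x_0,x_1)}_{F_n}$ as well as $\norm{(x_0,x_1)}_F$ is computed from one and the same branch of the formula, so the only thing to check is that the expression in that branch depends continuously on the tail index. In the branch $|x_1|=0$ the value is $|x_0|$ for every index, and in the branch $|x_1|>|x_0|$ the value is $|x_1|/(1-\gamma_n)\to|x_1|/(1-\gamma)$; both limits are immediate.

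The only branch requiring a word is the middle one, $0<|x_1|\le|x_0|$, where
$$
\norm{(x_0,x_1)}_{F_n}=|x_0|\left[1+\frac{\gamma_n}{1-\gamma_n}\left(\frac{|x_0|}{|x_1|}\right)^{-1/\gamma_n}\right].
$$
Here $\gamma_n\to\gamma\in(0,1)$ stays bounded away from both $0$ and $1$, so $\gamma_n/(1-\gamma_n)\to\gamma/(1-\gamma)$ and, since $|x_0|/|x_1|\ge 1$, the map sending the tail index $\gamma$ to $(|x_0|/|x_1|)^{-1/\gamma}$ is continuous at $\gamma$; combining these gives the claimed pointwise convergence. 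I do not anticipate any genuine obstacle: the argument is simply a continuity-in-parameter check followed by an appeal to Theorem~\ref{theoWass}. The one point deserving care is to record explicitly that $\gamma\in(0,1)$, since this both keeps the denominators $1-\gamma_n$ away from zero and guarantees that the limit is an admissible $F$-norm, without which Theorem~\ref{theoWass} could not be applied.
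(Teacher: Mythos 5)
Your proposal is correct and follows exactly the route the paper intends: the paper states the corollary as "an immediate consequence of Example~\ref{expareto} and Theorem~\ref{theoWass}", and your argument simply fills in the routine continuity-in-$\gamma$ check of the explicit three-branch formula before invoking Theorem~\ref{theoWass}. Your remark that $\gamma\in(0,1)$ is needed so that the limit is itself an $F$-norm is exactly the right point of care.
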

\noindent
The use of $F$-norms makes it possible to prove convergence in distribution and of moments with a single calculation and thus obtain results such as Corollary~\ref{corpareto} with a concise proof. Of course, one could alternatively prove Corollary~\ref{corpareto} by proving separately the convergence of dfs and convergence of moments, but this requires two distinct calculations. Let us also note that while the $F$-norm of a Pareto distribution is easy to obtain and has a relatively simple expression, its standard characteristic function ({\it i.e.} Fourier transform) is more involved and depends on the Gamma function evaluated in the complex plane.

The nice behavior of $F$-norms with respect to sequences of distributions naturally raises the question of what happens when $F_n$ is chosen to be the empirical df based on iid copies $\bfX^{(1)},\dots,\bfX^{(n)}$ of a rv $\bfX$ satisfying~$(\mathcal{H})$, {\it i.e.}
\[
\widehat{F}_n(\bft):= \frac{1}{n} \sum_{i=1}^n \indic_{\{ \bfX^{(i)} \leq \bft \}}, \ \bft\in\R^d.
\]
The (random) $F$-norm generated by $\widehat{F}_n$ is nothing but
$$
\norm{\bfx}_{\widehat{F}_n} = \frac{1}{n} \sum_{i=1}^n\max\left(\abs{x_0},\abs{x_1}X_1^{(i)},\dots,\abs{x_d}X_d^{(i)}\right).
$$
The law of large numbers then implies, for each $\bfx\in\R^{d+1}$, that a.s.
$$
\norm{\bfx}_{\widehat{F}_n} \to E\left(\max\left(\abs{x_0},\abs{x_1}X_1,\dots,\abs{x_d}X_d\right) \right) = \norm{\bfx}_F \ \mbox{ as } \ n\to\infty.
$$
This convergence suggests that the estimation of an $F$-norm is completely straightforward; by contrast, estimating the related concept of a $D$-norm in the context of multivariate extreme value analysis requires quite sophisticated techniques.

We now provide further insight into the convergence of $\norm{\cdot}_{\widehat{F}_n}$ to $\norm{\cdot}_F$. Noting that for any $\bfx$ in a box $K=\prod_{i=0}^d [a_i,b_i] \subset [0,\infty)^{d+1}$ we have, by monotonicity of $F$-norms,
\begin{eqnarray*}
\norm{\bfx}_{\widehat{F}_n}-\norm{\bfx}_F &\leq & \left( \norm{\bfb}_{\widehat{F}_n}-\norm{\bfb}_F \right) + \left( \norm{\bfb}_F-\norm{\bfa}_F \right) \\
\mbox{and } \ \norm{\bfx}_F-\norm{\bfx}_{\widehat{F}_n} &\leq & \left( \norm{\bfa}_F-\norm{\bfa}_{\widehat{F}_n} \right) + \left( \norm{\bfb}_F-\norm{\bfa}_F \right),
\end{eqnarray*}
the following locally uniform refinement of the pointwise almost sure convergence of $\norm{\cdot}_{\widehat{F}_n}$ to $\norm{\cdot}_F$ is a direct consequence of the continuity of $\norm\cdot_F$.
 
\begin{theorem}
\label{lem:uniform convergence on compact intervals}
Let $\bfX^{(1)},\dots,\bfX^{(n)}$ be iid copies of a rv $\bfX$ satisfying~$(\mathcal{H})$, with df $F$. Let $\norm{\cdot}_{\widehat{F}_n}$ be the random $F$-norm generated by the empirical df $\widehat{F}_n$ of this sample. We then have, for any $\bfx_0\geq\bfzero$,
\[
\sup_{\bfzero\le\bfx\le\bfx_0}\abs{\norm{\bfx}_{\widehat{F}_n}-\norm{\bfx}_F} \to 0 \ \mbox{ a.s.}
\]
\end{theorem}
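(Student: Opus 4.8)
The plan is to upgrade the pointwise almost sure convergence of $\norm{\cdot}_{\widehat{F}_n}$ to $\norm{\cdot}_F$ (already obtained from the law of large numbers) to local uniform convergence, using the two monotonicity inequalities displayed just before the statement together with compactness of $[\bfzero,\bfx_0]$ and continuity of $\norm{\cdot}_F$. This is a Pólya-type argument: the role usually played by monotonicity of distribution functions is here played by monotonicity of $F$-norms.

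First I would fix $\eps>0$. Since $\norm{\cdot}_F$ is a norm, hence continuous, it is uniformly continuous on the compact box $[\bfzero,\bfx_0]$; I can therefore partition $[\bfzero,\bfx_0]$ into finitely many subboxes $K_j=\prod_{i=0}^d [a_i^{(j)},b_i^{(j)}]$, $1\le j\le J$, with mesh fine enough that the oscillation $\norm{\bfb^{(j)}}_F-\norm{\bfa^{(j)}}_F$ is smaller than $\eps$ for every $j$, where $\bfa^{(j)}$ and $\bfb^{(j)}$ denote the lower and upper corners of $K_j$. For any $\bfx\in[\bfzero,\bfx_0]$, picking a box $K_j$ containing it and inserting the bound on the oscillation into the two displayed inequalities yields
$$
\abs{\norm{\bfx}_{\widehat{F}_n}-\norm{\bfx}_F}\le \max_{\bfc\in\mathcal{C}}\abs{\norm{\bfc}_{\widehat{F}_n}-\norm{\bfc}_F}+\eps,
$$
where $\mathcal{C}:=\{\bfa^{(j)},\bfb^{(j)}:\,1\le j\le J\}$ is the finite set of all corners. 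Taking the supremum over $\bfx$ and invoking pointwise convergence at the finitely many points of $\mathcal{C}$ shows that, almost surely, $\limsup_n \sup_{\bfzero\le\bfx\le\bfx_0}\abs{\norm{\bfx}_{\widehat{F}_n}-\norm{\bfx}_F}\le\eps$.

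The only genuine subtlety is the management of the almost sure quantifier: the pointwise convergence at a given corner holds on a full-probability event that a priori depends on that corner, and the partition (hence $\mathcal{C}$) depends on $\eps$. To obtain a single null set I would run the construction along the sequence $\eps_m=1/m$, $m\in\N$, producing a countable collection of finite corner sets $\mathcal{C}_m$. On the intersection, over all points of the countable set $\bigcup_m \mathcal{C}_m$, of the corresponding pointwise convergence events---still an event of probability one---the displayed bound holds with $\eps=1/m$ for every $m$, so the $\limsup$ of the supremum is at most $1/m$ for all $m$, i.e. the supremum converges to $0$ almost surely. I expect this bookkeeping of the exceptional set, rather than the analytic core, to be the main point requiring care.
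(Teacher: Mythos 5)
Your proposal is correct and is precisely the argument the paper intends: the two monotonicity inequalities displayed before the statement, combined with uniform continuity of $\norm{\cdot}_F$ on the compact box and pointwise almost sure convergence at finitely many corners, which the paper leaves as ``a direct consequence of the continuity of $\norm{\cdot}_F$.'' Your careful handling of the exceptional null set via the countable family $\bigcup_m \mathcal{C}_m$ fills in the only detail the paper omits.
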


To analyse the rate of (uniform) convergence of $\norm{\cdot}_{\widehat{F}_n}$ to $\norm{\cdot}_F$, we define the {\it empirical $F$-norm process}
\[
S_n= (S_n(\bfx))_{\bfx\ge\bfzero}:= \sqrt{n} \left(\norm{\bfx}_{\widehat{F}_n}-\norm{\bfx}_F \right)_{\bfx\ge\bfzero}
\]
on $[0,\infty)^{d+1}$. This stochastic process has continuous sample paths and satisfies $S_n(\bfzero)=0$. Suppose then that $E(X_i^2)<\infty$ for any $i\in\{1,\ldots,d\}$. Based on the standard central limit theorem, which gives the pointwise asymptotic normality of $S_n$, we may ask the question of the limiting behavior of the process $S_n$. For ease of exposition, we state a result in the case $d=1$.

\begin{theorem}
\label{theoCLT1}
Let $X^{(1)},\dots,X^{(n)}$ be iid copies of a univariate rv $X$ with df $F$. Assume that $X$ is nonnegative, with nonzero expectation and finite variance. Let $\norm{\cdot}_{\widehat{F}_n}$ be the random $F$-norm generated by the empirical df $\widehat{F}_n$ of this sample. For any $x_0, y_0>0$, we have
\[
S_n(x,y) := \sqrt{n} \left(\norm{(x,y)}_{\widehat{F}_n}-\norm{(x,y)}_F \right) \to  S(x,y)
\]
weakly in the space $C([0,x_0]\times [0,y_0])$ of continuous functions over $[0,x_0]\times [0,y_0]$, where the limiting process $S$, which should be read as 0 when $y=0$, is a bivariate Gaussian process with covariance structure
\begin{align*}
 & \operatorname{Cov}( S(x_1,y_1), S(x_2,y_2) ) \\
 &= x_1 x_2 \iint_{[1,\infty)^2} \left[ F\left( \min\left\{ \frac{x_1}{y_1} u, \frac{x_2}{y_2} v \right\} \right) - F\left( \frac{x_1}{y_1} u \right) F\left( \frac{x_2}{y_2} v \right) \right] du \, dv.
\end{align*}
\end{theorem}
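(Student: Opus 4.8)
The plan is to recognize $S_n$ as an empirical process indexed by a parametric class of functions and to invoke the functional central limit theorem for a Donsker class. Writing $g_{(x,y)}(z):=\max(x,yz)$ for $z\geq 0$, we have $\norm{(x,y)}_{\widehat F_n}=n^{-1}\sum_{i=1}^n g_{(x,y)}(X^{(i)})$ and $\norm{(x,y)}_F=E(g_{(x,y)}(X))$, so that
$$
S_n(x,y)=\sqrt{n}\left(\frac1n\sum_{i=1}^n g_{(x,y)}(X^{(i)})-E(g_{(x,y)}(X))\right)
$$
is exactly the empirical process evaluated over the class $\calF:=\set{g_{(x,y)}:(x,y)\in[0,x_0]\times[0,y_0]}$. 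The strategy is to check that $\calF$ is $P$-Donsker and then to identify the covariance of the limit.

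First I would verify the two structural properties that make $\calF$ Donsker. The class has the integrable envelope $G(z):=\max(x_0,y_0z)\leq x_0+y_0z$, which lies in $L^2$ because $E(X^2)<\infty$; this is also what guarantees that each $g_{(x,y)}(X)$ has finite variance. Next, the maps are Lipschitz in the index: for all $z\geq 0$,
$$
\abs{g_{(x,y)}(z)-g_{(x',y')}(z)}\leq \abs{x-x'}+z\abs{y-y'}\leq (1+z)\left(\abs{x-x'}+\abs{y-y'}\right),
$$
and $1+X\in L^2$ again by finite variance. A Lipschitz-in-parameter class indexed by a bounded subset of $\R^2$ has bracketing numbers $N_{[\,]}(\eps,\calF,L^2(P))=O(\eps^{-2})$, so the bracketing entropy integral $\int_0^1\sqrt{\log N_{[\,]}(\eps,\calF,L^2(P))}\,d\eps$ is finite. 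Hence $\calF$ is $P$-Donsker and $S_n$ converges weakly in $\ell^\infty([0,x_0]\times[0,y_0])$ to a tight mean-zero Gaussian process $S$. The Lipschitz bound also shows that the intrinsic $L^2(P)$-pseudometric is dominated by the Euclidean metric, so $S$ has sample paths continuous in the usual topology, and the weak convergence therefore takes place in $C([0,x_0]\times[0,y_0])$.

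It then remains to compute the covariance of $S$, which is the covariance of the underlying functions:
$$
\Cov(S(x_1,y_1),S(x_2,y_2))=\Cov\left(\max(x_1,y_1X),\max(x_2,y_2X)\right).
$$
I would evaluate this with Hoeffding's covariance identity $\Cov(U,V)=\iint[P(U>s,V>t)-P(U>s)P(V>t)]\,ds\,dt$. For $U=\max(x_1,y_1X)$ and $V=\max(x_2,y_2X)$ the integrand vanishes unless $s\geq x_1$ and $t\geq x_2$; on that region $P(U>s,V>t)=1-F(\max(s/y_1,t/y_2))$, while $P(U>s)=1-F(s/y_1)$ and $P(V>t)=1-F(t/y_2)$. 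Using monotonicity of $F$ to rewrite $F(\max(\cdot,\cdot))$ via a $\min$, the integrand simplifies to $F(\min(s/y_1,t/y_2))-F(s/y_1)F(t/y_2)$, and the substitutions $s=x_1u$, $t=x_2v$ then produce exactly the stated double integral over $[1,\infty)^2$ with prefactor $x_1x_2$. Finally, for $y=0$ one has $g_{(x,0)}\equiv x$, so $S_n(x,0)=0$ identically and, since $\Var(\max(x,yX))\to 0$ as $y\downarrow 0$, also $S(x,0)=0$, consistent with the convention in the statement; continuity of the paths extends $S$ to this boundary.

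The step I expect to be the crux is establishing asymptotic equicontinuity (tightness), i.e. the Donsker property of $\calF$. The parametric Lipschitz structure reduces this to a routine bracketing-entropy bound, and the finite-variance assumption is precisely what supplies both the square-integrable envelope and the square-integrable Lipschitz constant needed; without it the limiting variances would be infinite and no Gaussian limit could exist. An appealing alternative, closer to the ``functional of a Brownian bridge'' description, is to use Lemma~\ref{lemfund} to write $S_n(x,y)=-y\int_{x/y}^{\infty}\sqrt{n}(\widehat F_n(u)-F(u))\,du$ and pass to the limit through the uniform empirical process $\sqrt{n}(\widehat F_n-F)$ by the continuous mapping theorem; this route also hinges on the finite-variance assumption, now used to control the tail of the improper integral and secure continuity of the integral functional.
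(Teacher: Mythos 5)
Your main argument is correct, and for the crucial tightness step it takes a genuinely different route from the paper. Both proofs identify the covariance identically (Hoeffding's identity, vanishing of the integrand off $\{s\geq x_1,\,t\geq x_2\}$, then the substitution $s=x_1u$, $t=x_2v$), and your computation there is accurate. The difference lies in how process-level convergence is obtained: the paper proves finite-dimensional convergence plus tightness in $C([0,x_0]\times[0,y_0])$ directly, getting equicontinuity from a strong approximation of the uniform empirical process by a Brownian bridge (Shorack--Wellner), the quantile coupling $\widetilde X^{(n,i)}=q(U^{(n,i)})$, and the identity $\max(a,b)=a+b-\min(a,b)$, which turns the $\max$-functional into an integral of $\mathbb{W}_n\circ F$ over a \emph{bounded} domain whose limit is a.s. uniformly continuous. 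You instead treat $S_n$ as the empirical process indexed by the Lipschitz-in-parameter class $\set{g_{(x,y)}}$ and invoke the bracketing (Ossiander) CLT; the square-integrable envelope and Lipschitz constant are exactly what $E(X^2)<\infty$ supplies, and continuity of the limit paths in the Euclidean metric follows because the variance semimetric is dominated by it. Your route is shorter, relies only on standard Donsker machinery, and extends verbatim to $d>1$, where the paper must appeal to Massart's conditions for the multivariate KMT construction; the paper's route is more constructive and yields as a by-product the explicit representation $S(x,y)\stackrel{d}{=}y\int_{x/y}^\infty W\circ F(u)\,du$ discussed after the theorem. One caveat on your closing aside: the ``alternative'' via $S_n(x,y)=-y\int_{x/y}^\infty\sqrt n\,(\widehat F_n-F)(u)\,du$ and the continuous mapping theorem is not as routine as suggested, since the integral runs over an unbounded domain and this functional is not continuous on $\ell^\infty$; making it rigorous requires a weighted or integrated form of empirical-process convergence, essentially the condition $\int_0^\infty\sqrt{F(u)[1-F(u)]}\,du<\infty$, which the paper points out is strictly stronger than $E(X^2)<\infty$. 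As this is offered only as an alternative and not used, it does not affect the validity of your main proof.
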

Under the further assumption that $\int_0^{\infty} \sqrt{F(u) [1-F(u)]} \, du <\infty$~\citep[which is equivalent to $E(X^2)<\infty$ when $F$ is regularly varying at infinity, according to {\it e.g.}][p.276]{ser1980} we have the representation
$$
S(x,y) \stackrel{d}{=} y\int_{x/y}^{\infty} W\circ F(u) \, du
$$
as processes in $C([0,x_0]\times [0,y_0])$, where $W$ is a Brownian bridge on $[0,1]$. Indeed, since for any $t\in [0,1]$ the rv $W(t)$ is Gaussian centered with variance $t(1-t)$, we have $\mathbb{E}|W(t)| = \sqrt{t(1-t)}\sqrt{2/\pi}$, and thus
\begin{eqnarray*}
\mathbb{E}\left|\int_{x/y}^{\infty} W\circ F(u) \, du \right| &\leq & \int_0^{\infty} \mathbb{E} |W\circ F(u)| \, du \\
	&=& \sqrt{\frac{2}{\pi}} \int_0^{\infty} \sqrt{F(u) [1-F(u)]} \, du <\infty
\end{eqnarray*}
so that $y\int_{x/y}^{\infty} W\circ F(u) \, du$ is well-defined and a.s. finite. It is then straightforward to show, using the covariance properties of $W$, that the covariance structure of this Gaussian process coincides with that of $S$.
\begin{proof} The random functions $S_n$ and $S$ are elements of the functional space $C([0,x_0]\times [0,y_0])$. By Theorem~7.5 in~\citet{bil1999}, it suffices to show the convergence of finite-dimensional margins of $S_n$ to those of $S$ along with tightness of $(S_n)$, in the sense of tightness of its sequence of distributions.

We start by convergence of finite-dimensional margins. The multivariate central limit theorem implies, for nonnegative pairs $(x_1,y_1), \ldots, (x_k,y_k)$, that the rv $(S_n(x_1,y_1),\ldots, S_n(x_k,y_k))$ converges weakly to a centered Gaussian distribution. By Hoeffding's identity~\citep[see][Lemma 2.5.2]{falk2019}, the limiting covariance matrix is described by
\begin{align*}
 &  \operatorname{Cov}( \max(x_i, y_i X), \max(x_j, y_j X) ) \\
 &= \iint_{\R^2} [ P(\max(x_i, y_i X) \leq x, \max(x_j, y_j X)\leq y) \\
 & \hspace*{1.3cm} - P(\max(x_i, y_i X) \leq x) P(\max(x_j, y_j X)\leq y)] \, dx \, dy \\
 &= \int_{x_i}^{\infty} \int_{x_j}^{\infty} [ P( y_i X \leq x, y_j X\leq y) - P(y_i X \leq x) P(y_j X \leq y)] \, dx \, dy.
\end{align*}
This is clearly equal to 0 when either $y_i$ or $y_j$ is 0, and otherwise, using the change of variables $x= x_i u, y= x_j v$, we find
\begin{align*}
 &  \operatorname{Cov}( \max(x_i, y_i X), \max(x_j, y_j X) ) \\
 &= x_i x_j \iint_{[1,\infty)^2} \left[ F\left( \min\left\{ \frac{x_i}{y_i} u, \frac{x_j}{y_j} v \right\} \right) - F\left( \frac{x_i}{y_i} u \right) F\left( \frac{x_j}{y_j} v \right) \right] du \, dv
\end{align*}
which is exactly the covariance structure of the Gaussian process $S$.

We now show tightness, that is, for any $\varepsilon>0$,
$$
\lim_{\delta\to 0} \limsup_{n\to\infty} P\left( \sup_{\substack{(x_1,y_1),(x_2,y_2)\in  [0,x_0]\times [0,y_0] \\ \max(|x_1-x_2|,|y_1-y_2|)\leq \delta}} |S_n(x_1,y_1) - S_n(x_2,y_2)| >\varepsilon\right) = 0,
$$
or, in other words, that $S_n$ is stochastically equicontinuous on $[0,x_0]\times [0,y_0]$. The key to the proof is threefold. Firstly, we apply Theorem~1 p.93 of~\citet{showel1986} to construct, on a common probability space, a triangular array $(U^{(n,1)},\ldots,U^{(n,n)})_{n\geq 1}$ of rowwise independent, standard uniform rvs, and a Brownian bridge $\widetilde{W}$ such that
$$
\sup_{0\leq t\leq 1} |\mathbb{W}_n(t) - \widetilde{W}(t)| \to 0 \ \mbox{ a.s. with } \ \mathbb{W}_n(t) := \frac{1}{\sqrt{n}} \sum_{i=1}^n \left[ \indic_{\{ U^{(n,i)} \leq t \}} - t\right].
$$
Secondly, if we denote by $q$ the quantile function of $X$ ({\it i.e.} the left-continuous inverse of $F$) and by $\widetilde{X}^{(n,i)} := q(U^{(n,i)})$, we have, for any $n\geq 1$,
$$
S_n(x,y) \stackrel{d}{=} \widetilde{S}_n(x,y) := \frac{1}{\sqrt{n}} \sum_{i=1}^n \left[ \max\left(x,y \, \widetilde{X}^{(n,i)}\right) - E(\max(x,y X)) \right],
$$
as processes in $C([0,x_0]\times [0,y_0])$. We may and will therefore prove our result using $\widetilde{S}_n$ rather than $S_n$. Thirdly and finally, if $(x,y)\in [0,x_0]\times [0,y_0]$, we have
$$
\min\left(x,y \widetilde{X}^{(n,i)}\right) = y\int_0^{x/y} \left[ 1-\indic_{\{ \widetilde{X}^{(n,i)} \leq u  \}} \right] \, du.
$$
Since $\widetilde{X}^{(n,i)} \leq u\Leftrightarrow U^{(n,i)}\leq F(u)$, this yields
$$
\frac{1}{\sqrt{n}} \sum_{i=1}^n \left[ \min\left(x,y \widetilde{X}^{(n,i)}\right) - E(\min\left(x,y X\right)) \right] = -y\int_0^{x/y} \mathbb{W}_n \circ F(u) \, du.
$$
Using the identity $\max(a,b)=a+b-\min(a,b)$, valid for any $a,b\geq 0$, it follows that:
\begin{align*}
\widetilde{S}_n(x_1,y_1) - \widetilde{S}_n(x_2,y_2) &= (y_1 - y_2) \times \frac{1}{\sqrt{n}}\sum_{i=1}^n [\widetilde{X}^{(n,i)} - E(X)] \\
							&+ T_n(x_1,y_1) - T_n(x_2,y_2) \\
\mbox{with } T_n(x,y)		&:= y\int_0^{x/y} \mathbb{W}_n \circ F(u) \, du.
\end{align*}
The first term on the right-hand side above is stochastically equicontinuous, because the random term is a $\operatorname{O}_{\mathbb{P}}(1)$ (by the Chebyshev inequality). We conclude the proof by focusing on $T_n(x,y)$, and for this we first remark that
$$
\sup_{\substack{0\leq x\leq x_0 \\ 0\leq y\leq y_0}} \left| y\int_0^{x/y} \left[ \mathbb{W}_n \circ F(u) - \widetilde{W}\circ F(u) \right] du \right| \leq x_0 \sup_{0\leq t\leq 1} |\mathbb{W}_n(t) - \widetilde{W}(t)| \to 0
$$
almost surely. A consequence of this convergence is that, to show the stochastic equicontinuity of $T_n$, it is enough to prove that the random function $T$ defined by
$$
T(x,y) := \begin{cases} y\displaystyle\int_0^{x/y} \widetilde{W}\circ F(u) \, du & \mbox{if } y>0, \\ 0 & \mbox{if } y=0, \end{cases}
$$
satisfies
$$
\lim_{\delta\to 0} P\left( \sup_{\substack{(x_1,y_1),(x_2,y_2)\in  [0,x_0]\times [0,y_0] \\ \max(|x_1-x_2|,|y_1-y_2|)\leq \delta}} |T(x_1,y_1) - T(x_2,y_2)| >\varepsilon\right) = 0.
$$
Recall that $\widetilde{W}$ has almost surely continuous sample paths on $[0,1]$, and thus $T$ is almost surely continuous on $[0,x_0]\times (0,y_0]$. Because, for any $y>0$,
$$
T(x,y) = \int_0^{x} \widetilde{W}\circ F(u/y) \, du
$$
and $T(x,0)=0$, it follows by the dominated convergence theorem that almost sure continuity of $T$ also holds on the {\it compact} set $[0,x_0]\times [0,y_0]$. Then $T$ must also be almost surely uniformly continuous on this set, and therefore
$$
\lim_{\delta\to 0}\sup_{\substack{(x_1,y_1),(x_2,y_2)\in  [0,x_0]\times [0,y_0] \\ \max(|x_1-x_2|,|y_1-y_2|)\leq \delta}} |T(x_1,y_1) - T(x_2,y_2)| = 0 \ \mbox{ a.s. }
$$
This completes the proof.
\end{proof}
In the case $d>1$, and under regularity conditions~\citep[{\it e.g.} those of][]{mas1989}, a similar proof using a special construction of the multivariate empirical process can be written to show an analogue of Theorem~\ref{theoCLT1}, which gives the convergence of the process $S_n$, in a space of continuous functions over compact subsets of $[0,\infty)^{d+1}$, to a $(d+1)-$dimensional Gaussian process $S$ with covariance structure
\begin{align*}
 & \operatorname{Cov}( S(x_1,\bfx_1), S(x_2,\bfx_2) ) \\
 &= x_1 x_2 \iint_{[1,\infty)^{2d}} \left[ F\left( \min\left\{ \frac{x_1}{\bfx_1} \bfu, \frac{x_2}{\bfx_2} \bfv \right\} \right) - F\left( \frac{x_1}{\bfx_1} \bfu \right) F\left( \frac{x_2}{\bfx_2} \bfv \right) \right] d\bfu \, d\bfv.
\end{align*}
Our objective is now to dwell upon the nice sequential behavior of $F$-norms and show an example of how this could be used to prove powerful theorems on the convergence of certain sequences of rvs. To this end we first need to understand better how to manipulate $F$-norms, which leads us to exploring their algebraic properties.
\section{Algebra of the set of $F$-norms}
\label{sec:algebra}

One can multiply $F$-norms $\norm\cdot_{F}$ and $\norm\cdot_{G}$ by constructing the $F$-norm generated by the componentwise product of any pair of {\it independent} rvs having dfs $F$ and $G$; independence is used to ensure that the distribution of this componentwise product is well-defined, and thus so is the product $F$-norm. We denote this operation by $\norm\cdot_{F} * \norm\cdot_{G}$. It coincides with taking products of $D$-norms if $\norm\cdot_{F}$ and $\norm\cdot_{G}$ have components with expectation 1, see~\citet[Section~1.9]{falk2019}. 
\begin{exam}[Product of Bernoulli $F$-norms]
\upshape The product of two independent Bernoulli rvs $X$ and $Y$ with respective parameters $p$ and $q$ is also a Bernoulli rv, with parameter $pq$. As a consequence, following Example~\ref{exber}, the resulting product $F$-norm is 
$$
\norm{(x_0,x_1)}_F = (1-pq) |x_0| + pq\max(|x_0|,|x_1|).
$$
\end{exam}
The previous example was easy to analyse because the product of two independent Bernoulli rvs is also a Bernoulli rv. In general cases, where the product of the two rv may not have such a simple distribution, the product $F$-norm can be calculated using the following Tonelli formula. 
\begin{prop}
\label{propprodFnorm}
Let $F$ and $G$ be the dfs of two rvs satisfying condition~$(\mathcal{H})$. Then, for any $\bfx=(x_0,x_1,\ldots,x_d)\in \R^{d+1}$,  
\begin{eqnarray*}
(\norm\cdot_{F} * \norm\cdot_{G}) (\bfx) &=& \int_{[0,\infty)^d} \norm{(x_0,x_1 t_1,\ldots,x_d t_d)}_{F} \, dG(t_1,\ldots,t_d) \\
													   &=& \int_{[0,\infty)^d} \norm{(x_0,x_1 t_1,\ldots,x_d t_d)}_{G} \, dF(t_1,\ldots,t_d).
\end{eqnarray*}
\end{prop}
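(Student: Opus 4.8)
The plan is to unwind the definition of the product $F$-norm and then apply Tonelli's theorem to express the expectation over the product distribution as an iterated integral. Let $\bfX=(X_1,\dots,X_d)$ and $\bfY=(Y_1,\dots,Y_d)$ be \emph{independent} rvs with dfs $F$ and $G$ respectively. By definition, $\norm\cdot_{F} * \norm\cdot_{G}$ is the $F$-norm generated by the componentwise product $\bfX\bfY=(X_1 Y_1,\dots,X_d Y_d)$, so that for any $\bfx=(x_0,x_1,\dots,x_d)\in\R^{d+1}$,
$$
(\norm\cdot_{F} * \norm\cdot_{G})(\bfx) = E\left(\max\left(\abs{x_0}, \abs{x_1} X_1 Y_1,\dots,\abs{x_d} X_d Y_d\right)\right).
$$

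First I would use the independence of $\bfX$ and $\bfY$ to write this expectation as a double integral against the product measure $dF\otimes dG$ over $[0,\infty)^d\times[0,\infty)^d$. The integrand,
$$
\max\left(\abs{x_0}, \abs{x_1} s_1 t_1,\dots,\abs{x_d} s_d t_d\right),
$$
is a nonnegative measurable function of $(s_1,\dots,s_d,t_1,\dots,t_d)$, so Tonelli's theorem applies and lets me integrate in either order without any integrability check beyond nonnegativity. Fixing $(t_1,\dots,t_d)$ and integrating first against $dF$, the inner integral is
$$
\int_{[0,\infty)^d} \max\left(\abs{x_0}, \abs{x_1} s_1 t_1,\dots,\abs{x_d} s_d t_d\right) dF(s_1,\dots,s_d) = E\left(\max\left(\abs{x_0},\abs{x_1 t_1} X_1,\dots,\abs{x_d t_d} X_d\right)\right),
$$
which is exactly $\norm{(x_0,x_1 t_1,\dots,x_d t_d)}_F$ by the definition of the $F$-norm in~\eqref{defn:definition of F-norm} (here I use $t_i\ge 0$ so that $\abs{x_i t_i}=\abs{x_i} t_i$). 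Integrating the result against $dG$ then yields the first displayed identity. Reversing the order of integration — integrating first against $dG$ — gives the second identity by the symmetric argument, recognizing $\norm{(x_0,x_1 t_1,\dots,x_d t_d)}_G$ in the inner integral.

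The main thing to verify, rather than a genuine obstacle, is that the product $F$-norm is well-defined, i.e. that $\bfX\bfY$ itself satisfies $(\mathcal{H})$: each $X_i Y_i$ is a.s. nonnegative, and by independence $E(X_i Y_i)=E(X_i)E(Y_i)\in(0,\infty)$, so the hypothesis carries over and Lemma~\ref{lemXnorm} guarantees that $\norm\cdot_{F}*\norm\cdot_{G}$ is a genuine $F$-norm. Since the integrand is nonnegative throughout, Tonelli (not Fubini) suffices and no delicate integrability estimate is required; the only care needed is the bookkeeping of absolute values and the componentwise nonnegativity of $t_i$ when matching the inner integral to the $F$-norm at the rescaled argument $(x_0,x_1 t_1,\dots,x_d t_d)$.
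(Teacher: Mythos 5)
Your proof is correct and follows essentially the same route as the paper's: both unwind the definition of the product $F$-norm and apply Tonelli's theorem to the nonnegative integrand, integrating out one factor to recognize the $F$-norm at the rescaled argument, then swapping the order of integration for the second identity. The added check that the componentwise product satisfies $(\mathcal{H})$ is a sensible extra detail but does not change the argument.
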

\begin{proof} Let $\bfX = (X_1,\ldots,X_d)$ and $\bfY = (Y_1,\ldots,Y_d)$ be independent and have dfs $F$ and $G$. We have 
$$
(\norm\cdot_{F} * \norm\cdot_{G}) (\bfx) = E\left(\max\left(\abs{x_0}, \abs{x_1} X_1 Y_1,\dots,\abs{x_d} X_d Y_d \right)\right).
$$
By nonnegativity of $\max\left(\abs{x_0}, \abs{x_1} X_1 Y_1,\dots,\abs{x_d} X_d Y_d \right)$ and independence of $\bfX$ and $\bfY$, we find, using the Tonelli theorem, that  
\begin{eqnarray*}
 & & (\norm\cdot_{F} * \norm\cdot_{G}) (\bfx) \\ 
 &=& \int_{[0,\infty)^d} E\left(\max\left(\abs{x_0}, \abs{x_1} t_1 X_1,\dots,\abs{x_d} t_d X_d \right)\right) dG(t_1,\ldots,t_d)
\end{eqnarray*}
which is exactly the first formula. The second expression follows by swapping integration with respect to $dG$ for integration with respect to $dF$.
\end{proof}
\begin{exam}[Product of uniform $F$-norms]
\upshape Following Example~\ref{exunif}, the product of the standard uniform $F$-norm by itself has the expression  
\begin{eqnarray*}
(\norm{\cdot}_F * \norm{\cdot}_F)(x_0,x_1) &=& \int_0^1 \left( |x_0| \indic_{\{ t|x_1| \le |x_0|\}} + \dfrac{x_0^2+t^2 x_1^2}{2t|x_1|} \indic_{\{ t|x_1|>|x_0| \}} \right) dt \\
										   &=& \begin{cases} |x_0| & \mbox{if } |x_1| \leq |x_0|, \\[5pt] \displaystyle \frac{5}{4} |x_1| + \frac{x_0^2}{2|x_1|} \left[ \log\left( \frac{|x_1|}{|x_0|} \right) - \frac{1}{2}  \right] & \mbox{if } |x_1|> |x_0|. \end{cases}
\end{eqnarray*}
\end{exam}
Let us now explore in more detail the structure of the set of $F$-norms equipped with its multiplication. It is clear that the sup-norm $\norm\cdot_{\infty}$ on $\R^{d+1}$, with generator $(1,\ldots,1) \in \R^d$, is an identity element for this operation. It is also straightforward to see that it is the unique such element: if $\norm\cdot_{F}$ is an identity element for $*$ then
$
\norm\cdot_{F} = \norm\cdot_{F} * \norm\cdot_{\infty} = \norm\cdot_{\infty}.
$
We summarize this short discussion by the following result.
\begin{prop}
\label{propgroup}
The set of $F$-norms is a commutative monoid for the $F$-norm multiplication $*$, with identity element $\norm\cdot_{\infty}$. The only invertible elements are the $F$-norms generated by nonrandom vectors.
\end{prop}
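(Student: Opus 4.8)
The plan is to treat the algebraic structure first and then the characterization of invertible elements, the latter being the substantive part. For the monoid structure, I would begin by checking that $*$ is a well-defined internal operation on the set of $F$-norms. If $\bfX$ and $\bfY$ are independent generators of $\norm\cdot_F$ and $\norm\cdot_G$, their componentwise product $\bfX\bfY$ is almost surely nonnegative, and by independence $E(X_iY_i)=E(X_i)E(Y_i)\in(0,\infty)$ for each $i$; hence $\bfX\bfY$ again satisfies $(\mathcal{H})$, so $\norm\cdot_F*\norm\cdot_G$ is genuinely an $F$-norm. Commutativity is immediate from the two symmetric expressions in Proposition~\ref{propprodFnorm} (equivalently, $\bfX\bfY$ and $\bfY\bfX$ share the same law). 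Associativity follows because $*$ is defined through the distribution of a componentwise product of independent rvs: for three mutually independent generators $\bfX,\bfY,\bfZ$ of $\norm\cdot_F,\norm\cdot_G,\norm\cdot_H$, the law of $\bfX\bfY\bfZ$ does not depend on the order in which the products are formed, so both $(\norm\cdot_F*\norm\cdot_G)*\norm\cdot_H$ and $\norm\cdot_F*(\norm\cdot_G*\norm\cdot_H)$ are the $F$-norm generated by $\bfX\bfY\bfZ$. The identity element $\norm\cdot_\infty$ and its uniqueness are already recorded in the discussion preceding the statement.

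It remains to identify the invertible elements, for which I would use that, by Theorem~\ref{theo:F-norm characterizes distribution}, an $F$-norm equals $\norm\cdot_\infty$ precisely when its generator has the degenerate law at $(1,\dots,1)$. For the easy direction, a nonrandom generator $\bfc=(c_1,\dots,c_d)>\bfzero$ has inverse the nonrandom vector $(1/c_1,\dots,1/c_d)$, since the componentwise product of these two constant vectors is $(1,\dots,1)$; hence every $F$-norm generated by a nonrandom vector is invertible. For the converse, suppose $\norm\cdot_F$ is invertible, and let independent rvs $\bfX$ and $\bfY$ generate $\norm\cdot_F$ and its inverse. Then $\bfX\bfY$ generates $\norm\cdot_\infty$, so $\bfX\bfY=(1,\dots,1)$ almost surely; in particular $X_iY_i=1$ a.s.\ for each $i$, with $X_i$ and $Y_i$ independent (being coordinates of independent vectors) and almost surely positive (since their product is a positive constant).

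The crux of the argument, which I expect to be the main obstacle, is to deduce that each $X_i$ is almost surely constant. Here I would exploit that $Y_i=1/X_i$ almost surely, so $Y_i$ is simultaneously $\sigma(X_i)$-measurable and independent of $X_i$, hence independent of itself. A random variable independent of itself is almost surely constant, since any event $A$ in its generated $\sigma$-field satisfies $P(A)=P(A)^2$ and thus $P(A)\in\{0,1\}$; consequently $Y_i$, and therefore $X_i$, equals some constant $c_i>0$ almost surely. As this holds for every component, $\bfX=(c_1,\dots,c_d)$ almost surely is a nonrandom vector, which is the desired conclusion. Combining the two directions shows that the invertible elements are exactly the $F$-norms of nonrandom vectors, completing the proof.
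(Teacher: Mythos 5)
Your proof is correct, and for the substantive part --- the characterization of invertible elements --- it takes a genuinely different route from the paper. Both arguments reduce, via Theorem~\ref{theo:F-norm characterizes distribution}, to the statement that independent nonnegative $X_i$, $Y_i$ with $X_iY_i=1$ a.s.\ must be constants. The paper isolates this as Lemma~\ref{leminv} and proves it through Fourier analysis: writing $\log X_i+\log Y_i=0$ a.s.\ gives $\varphi_{X_i}(t)\varphi_{Y_i}(t)=1$, hence $|\varphi_{X_i}|\equiv 1$, and a separate Lemma~\ref{lemchar} (equality in Cauchy--Schwarz plus a differentiability argument for the resulting function $\lambda(t)$) shows that a random variable whose characteristic function has modulus one everywhere is degenerate. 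You instead observe that $Y_i=1/X_i$ a.s., so $Y_i$ is (up to null sets) $\sigma(X_i)$-measurable while also being independent of $\sigma(X_i)$, hence independent of itself, hence degenerate since every event in its $\sigma$-field has probability $0$ or $1$. Your argument is shorter and entirely elementary, avoiding characteristic functions altogether; what the paper's heavier machinery buys is a reusable toolkit --- the same style of argument (with $\varphi^2=\varphi$ in place of $|\varphi|=1$) is deployed immediately afterwards for the idempotent characterization in Proposition~\ref{propidem}, where the self-independence trick would not apply because the hypothesis there is only $X_iY_i\stackrel{d}{=}X_i$ rather than an almost-sure identity. You are also more explicit than the paper about the monoid axioms (closure under $(\mathcal{H})$, associativity via the law of the triple product, and the easy direction that nonrandom vectors are invertible), all of which the paper treats as clear; these verifications are correct.
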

\noindent
The only point we need to show in Proposition~\ref{propgroup} is the assertion about invertible elements. The key is to note the following lemmas.
\begin{lemma}
\label{lemchar}
Let $Z$ be a real-valued rv such that $|E(e^{itZ})|=1$ for any $t\in \R$. Then $Z$ is almost surely constant.
\end{lemma}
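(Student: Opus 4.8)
The plan is to pass from the modulus condition on the characteristic function to a genuine identity for a symmetrized version of $Z$, where the hypothesis becomes an equality of distributions rather than a constraint on absolute values.

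First I would set $\phi(t) := E(e^{itZ})$ and introduce an independent copy $Z'$ of $Z$. The characteristic function of the symmetrized variable $Z - Z'$ is then
\[
E\!\left(e^{it(Z - Z')}\right) = E(e^{itZ})\, E(e^{-itZ'}) = \phi(t)\,\overline{\phi(t)} = |\phi(t)|^2 = 1
\]
for every $t\in\R$, using independence together with the hypothesis. Thus $Z - Z'$ has characteristic function identically equal to $1$, which is the characteristic function of the degenerate distribution at $0$; by the uniqueness theorem for characteristic functions this forces $Z - Z' = 0$ almost surely, that is, $Z = Z'$ a.s.

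Next I would argue that the only laws for which two independent copies coincide almost surely are the point masses. Suppose, for contradiction, that the common distribution of $Z$ and $Z'$ is not degenerate. Then its distribution function $F$ satisfies $0 < F(m) < 1$ for some $m\in\R$, and since $Z$ and $Z'$ are independent,
\[
P(Z \le m,\ Z' > m) = F(m)\,(1 - F(m)) > 0.
\]
But on this event $Z < Z'$, contradicting $Z = Z'$ a.s. Hence the distribution of $Z$ is a point mass, which is exactly the assertion that $Z$ is almost surely constant.

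The conceptual heart of the argument is the symmetrization step, which converts the awkward condition $|\phi|\equiv 1$ into the clean statement that $Z - Z'$ has characteristic function $\equiv 1$, after which the uniqueness theorem does the real work. The only point requiring a little care is the final deduction that almost-sure equality of two independent copies forces degeneracy; I expect this splitting argument via a level $m$ with $0 < F(m) < 1$ to be the main (though still elementary) obstacle, since it is the step where one must genuinely rule out every non-degenerate law rather than merely manipulate characteristic functions.
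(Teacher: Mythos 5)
Your proof is correct, but it follows a genuinely different route from the paper's. You symmetrize: introducing an independent copy $Z'$, you observe that $Z-Z'$ has characteristic function $|\phi(t)|^2\equiv 1$, invoke the uniqueness theorem for characteristic functions to conclude $Z=Z'$ a.s., and then rule out non-degenerate laws by the splitting argument $P(Z\le m,\,Z'>m)=F(m)(1-F(m))>0$ (which is indeed the right way to see that two independent copies can only coincide a.s.\ when the law is a point mass — your worry about this step is unfounded, since a non-degenerate df must take a value strictly between $0$ and $1$ somewhere). The paper instead works directly with $Z$ on its original probability space: it reads $|E(e^{itZ}\cdot 1)|=1$ as the equality case of the Cauchy--Schwarz inequality, deduces that $e^{itZ}$ is a.s.\ equal to a deterministic constant $\lambda(t)$ for each $t$, and then extracts $Z=-i\lambda'(0)$ a.s.\ by differentiating along sequences $t_n\to 0$. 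Your version is the standard textbook argument — shorter and cleaner, at the cost of invoking the inversion/uniqueness theorem and an auxiliary product space for the independent copy; the paper's version is more self-contained (only Cauchy--Schwarz and elementary limits) and never leaves the original probability space, but needs the slightly fussy differentiability argument to pass from ``$e^{itZ}$ is a.s.\ constant for each $t$'' to ``$Z$ is a.s.\ constant.'' Both are complete proofs.
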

\begin{proof}[Proof of Lemma~\ref{lemchar}] We use the Cauchy-Schwarz inequality for the inner product $(X,Y)\mapsto E(X\overline{Y})$ on the space of complex-valued square-integrable rvs, to obtain:
$$
\forall t \in \R, \ |E(e^{itZ})|^2 = |E(e^{itZ}\cdot 1)|^2\leq 1.
$$
By assumption, we actually have equality here. This means that for any $t$, the rvs $e^{itZ}$ and 1 are almost surely proportional, {\it i.e.} $e^{itZ} = \lambda(t)$, with $\lambda(t) \in \mathbb{C}$. Define now the event $E_t:= \{ e^{itZ} = \lambda(t) \}$, and let $(t_n)$, $(t'_n)$ be two sequences converging to 0. Define $E=(\bigcap_n E_{t_n}) \cap (\bigcap_n E_{t'_n})$. Then $P(E)=1$ and on $E$,
$$
\frac{\lambda(t_n) - \lambda(0)}{t_n} = \frac{e^{it_n Z} - 1}{t_n} \to iZ \ \mbox{ and } \ \frac{\lambda(t'_n) - \lambda(0)}{t'_n} = \frac{e^{it'_n Z} - 1}{t'_n} \to iZ.
$$
It follows that the limit
$$
\lim_{n\to\infty} \frac{\lambda(t_n) - \lambda(0)}{t_n}
$$
exists and does not depend on the choice of $t_n\to 0$: the function $\lambda$ is differentiable at 0. Conclude, by using $(t_n)$ again, that on the event $(\bigcap_n E_{t_n})$, $\lambda'(0) = iZ$ and thus $Z$ is almost surely the constant $-i\lambda'(0)$.
\end{proof}
\begin{lemma}
\label{leminv}
Let $X$ and $Y$ be two independent nonnegative rvs such that $XY=1$ almost surely. Then $X$ and $Y$ are almost surely positive constants.
\end{lemma}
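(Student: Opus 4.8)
The plan is to reduce the statement to Lemma~\ref{lemchar} via a logarithmic transformation. First I would note that the identity $XY=1$ forces both $X$ and $Y$ to be strictly positive almost surely: if $X=0$ held on an event of positive probability, then $XY=0$ there, contradicting $XY=1$ a.s. We may therefore define $Z:=\log X$, a well-defined real-valued rv, and taking logarithms in $XY=1$ gives $\log Y=-Z$ almost surely.

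The key observation is that, because $X$ and $Y$ are independent, so are the rvs $Z=\log X$ and $\log Y=-Z$; in other words $Z$ is independent of $-Z$. I would exploit this through characteristic functions. For any $t\in\R$, independence of $e^{itZ}$ and $e^{-itZ}$ yields
\[
E(e^{itZ})\,E(e^{-itZ}) = E\!\left(e^{itZ}e^{-itZ}\right) = E(1) = 1.
\]
Since $E(e^{-itZ})=\overline{E(e^{itZ})}$, the left-hand side equals $|E(e^{itZ})|^2$, so $|E(e^{itZ})|=1$ for every $t\in\R$.

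Lemma~\ref{lemchar} then applies directly to $Z$ and shows that $Z$ is almost surely equal to some constant $c\in\R$. Consequently $X=e^{Z}=e^{c}$ is an almost surely constant positive number, and $Y=1/X=e^{-c}$ is likewise a positive constant, which is the assertion.

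I do not expect a serious obstacle here: the only points requiring care are the justification that $Z$ and $-Z$ are independent (immediate from independence of $X$ and $Y$ together with measurability of $\log$ and negation), and the recognition that the factorization collapses to the modulus-one condition needed to invoke Lemma~\ref{lemchar}. The positivity step at the outset is what makes the logarithm licit and should be stated explicitly.
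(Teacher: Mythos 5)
Your proof is correct and follows essentially the same route as the paper's: establish positivity, pass to logarithms, use independence to get a product of characteristic functions equal to $1$, deduce that the modulus is identically $1$, and invoke Lemma~\ref{lemchar}. The only cosmetic difference is that you obtain $|E(e^{itZ})|=1$ via the conjugate identity $E(e^{-itZ})=\overline{E(e^{itZ})}$, whereas the paper argues from the general bound that characteristic functions have modulus at most $1$; both are valid.
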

\begin{proof}[Proof of Lemma~\ref{leminv}] Necessarily $P(X=0)=P(Y=0)=0$. Then by assumption $\log X + \log Y$ is a.s. zero. Denote by $\varphi_X(t):=E(e^{it\log X})$ and $\varphi_Y(t):=E(e^{it\log Y})$ the characteristic functions of $\log X$ and $\log Y$. This entails $\varphi_X(t)\cdot \varphi_Y(t) = 1$ for any $t\in \R$, by independence. Since any characteristic function has a modulus at most 1, we find $|\varphi_X(t)| = |\varphi_Y(t)| = 1$. Conclude by applying Lemma~\ref{lemchar}.
\end{proof}
\begin{proof}[Proof of Proposition~\ref{propgroup}] Let $\norm\cdot_{F}$ and $\norm\cdot_{G}$ satisfy $\norm\cdot_{F} * \norm\cdot_{G} = \norm\cdot_{\infty}$. Equivalently, there are independent rvs $(X_1,\ldots,X_d)$ with df $F$ and $(Y_1,\ldots,Y_d)$ with df $G$ such that for any $\bfx=(x_0,x_1,\dots,x_d)\in\R^{d+1}$,
$$
E(\max(\abs{x_0}, \abs{x_1}X_1 Y_1,\dots,\abs{x_d}X_d Y_d)) = E(\max(\abs{x_0}, \abs{x_1} \cdot 1,\dots,\abs{x_d} \cdot 1)).
$$
By Theorem~\ref{theo:F-norm characterizes distribution}, we find that each $X_i Y_i$ is a.s. constant equal to 1. Conclude by applying Lemma~\ref{leminv}.
\end{proof}
The same kind of argument can be used to identify the set of idempotent elements for the multiplication of $F$-norms.
\begin{prop}
\label{propidem}
The only idempotent element for multiplication of $F$-norms is the sup-norm $\norm\cdot_{\infty}$.
\end{prop}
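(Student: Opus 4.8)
The plan is to reduce the idempotence condition to a probabilistic statement about the generating random vector and then exploit the multiplicativity of characteristic functions, exactly as in the proof of Proposition~\ref{propgroup}. Suppose $\norm\cdot_F$ is idempotent, so that $\norm\cdot_F * \norm\cdot_F = \norm\cdot_F$. Unwinding the definition of the product $F$-norm, this means that if $\bfX=(X_1,\dots,X_d)$ and $\bfX'=(X_1',\dots,X_d')$ are two independent copies of a rv with df $F$ satisfying $(\mathcal{H})$, then the componentwise product $(X_1 X_1',\dots,X_d X_d')$ again has df $F$. By Theorem~\ref{theo:F-norm characterizes distribution}, idempotence is therefore \emph{equivalent} to the distributional identity
$$
(X_1 X_1',\dots,X_d X_d') \stackrel{d}{=} (X_1,\dots,X_d),
$$
where $\bfX'$ is an independent copy of $\bfX$.

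Next I would pass to logarithms on each coordinate to turn products into sums, mirroring the trick in Lemma~\ref{leminv}. Since $(\mathcal{H})$ forces $P(X_i>0)>0$ but does \emph{not} a priori exclude the atom $\{X_i=0\}$, the first step is to rule out that atom: comparing the probability that the $i$th coordinate vanishes on both sides of the distributional identity gives $P(X_i X_i'=0)=P(X_i=0)$, and by independence $P(X_i X_i'=0)=1-(1-p_i)^2$ with $p_i:=P(X_i=0)$, forcing $p_i=0$. Hence each $X_i$ is almost surely positive, and I may set $Y_i:=\log X_i$. The identity above yields, coordinate-by-coordinate, that $Y_i + Y_i' \stackrel{d}{=} Y_i$ with $Y_i'$ an independent copy of $Y_i$. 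Writing $\psi_i$ for the characteristic function of $Y_i$, this reads $\psi_i(t)^2 = \psi_i(t)$ for all $t\in\R$, so $\psi_i(t)\in\{0,1\}$; continuity of $\psi_i$ together with $\psi_i(0)=1$ then forces $\psi_i\equiv 1$, i.e.\ $Y_i=0$ a.s.\ and thus $X_i=1$ a.s.

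Finally I would upgrade this marginwise conclusion to the full joint statement and identify the resulting $F$-norm. Once each $X_i=1$ a.s., the vector $\bfX$ equals $(1,\dots,1)$ a.s., so $F$ is the degenerate df at $(1,\dots,1)$ and, by Example~\ref{exdeg}, $\norm\cdot_F=\norm\cdot_\infty$. Conversely $\norm\cdot_\infty$ is idempotent since $1\cdot 1 = 1$, so it is indeed the unique idempotent. The main obstacle I anticipate is handling the possible atom at zero carefully: the equation $\psi_i(t)^2=\psi_i(t)$ only delivers $Y_i=0$ a.s.\ \emph{after} one has established that $X_i>0$ a.s., and the cleanest route is the atom-counting argument above rather than working directly with the (possibly defective) law of $\log X_i$. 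The step from marginal constancy to joint constancy is immediate here because $X_i=1$ a.s.\ for every $i$ pins down the entire vector with no dependence structure left to control.
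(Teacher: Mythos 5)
Your proposal is correct and follows essentially the same route as the paper: reduce idempotence via Theorem~\ref{theo:F-norm characterizes distribution} to the distributional identity $X_iX_i'\stackrel{d}{=}X_i$ coordinatewise, kill the atom at zero by the same probability computation, and then use the characteristic function of $\log X_i$ satisfying $\psi_i^2=\psi_i$ together with continuity and $\psi_i(0)=1$ to conclude $X_i=1$ a.s. The paper packages the per-coordinate argument as Lemma~\ref{lemidem}, but the content is identical.
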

The proof is again based on an auxiliary result for real-valued rvs.
\begin{lemma}
\label{lemidem}
Let $X$ and $Y$ be two independent nonnegative rvs having the same distribution and satisfying $XY\stackrel{d}{=}X$. Then $X=Y=1$ almost surely.
\end{lemma}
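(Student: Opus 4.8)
The plan is to convert the multiplicative identity $XY\stackrel{d}{=}X$ into an additive one by passing to logarithms, which recasts the problem as a statement about characteristic functions that can then be closed using Lemma~\ref{lemchar}. Since the logarithm only lives on $(0,\infty)$, the very first step must be to rule out any probability mass at the origin.

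First I would show that $P(X=0)\in\{0,1\}$. Writing $q:=P(X>0)$ and using independence, the mass the law of $XY$ places at $0$ is $P(XY=0)=1-P(X>0,Y>0)=1-q^2$, whereas $P(X=0)=1-q$. Because $XY$ and $X$ share the same distribution they assign equal mass to the singleton $\{0\}$, so $1-q^2=1-q$, forcing $q\in\{0,1\}$. The degenerate possibility $X=0$ a.s. must be excluded by hypothesis, and indeed it is ruled out in the application to Proposition~\ref{propidem} by condition~$(\mathcal{H})$, which forces $E(X)>0$. In all other cases $q=1$, so $X>0$ and $Y>0$ almost surely.

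In that case $U:=\log X$ and $V:=\log Y$ are well-defined, finite, independent and identically distributed real rvs, and $XY\stackrel{d}{=}X$ is equivalent to $U+V\stackrel{d}{=}U$. Denoting $\varphi(t):=E(e^{itU})$ and using independence together with $U\stackrel{d}{=}V$, this reads $\varphi(t)^2=\varphi(t)$ for every $t\in\R$, so $\varphi(t)\in\{0,1\}$ pointwise; in particular $\varphi$ is real-valued. Being a characteristic function it is continuous with $\varphi(0)=1$, and since $\R$ is connected while $\{0,1\}$ is not, $\varphi$ must be constant, hence $\varphi\equiv 1$. Then $|E(e^{itU})|=1$ for all $t$, so Lemma~\ref{lemchar} shows $U$ is almost surely a constant, which $\varphi\equiv 1$ identifies as $0$; therefore $X=e^{U}=1$ a.s., and the identity $X\stackrel{d}{=}Y$ gives $Y=1$ a.s. as well.

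The two preliminary reductions carry the only real subtlety: confirming that no mass escapes to $0$ so that passage to the logarithm is licit, and the elementary but essential observation that a continuous function valued in $\{0,1\}$ on the connected line is constant. Once these are in place the conclusion is an immediate application of Lemma~\ref{lemchar}, and notably no finiteness of moments is needed anywhere in the argument.
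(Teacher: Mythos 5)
Your proposal is correct and follows essentially the same route as the paper: rule out mass at the origin via the distribution of $XY$ at $0$, pass to logarithms to get $\varphi^2=\varphi$ for the characteristic function of $\log X$, and conclude $\varphi\equiv 1$ by continuity and connectedness. The only cosmetic difference is that you finish by invoking Lemma~\ref{lemchar}, whereas the paper concludes directly from $\varphi\equiv 1$ by uniqueness of characteristic functions; your explicit remark that the degenerate case $X=0$ a.s.\ must be excluded (as it is, via $E(X)>0$, in the application to Proposition~\ref{propidem}) matches the paper's own use of that hypothesis.
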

\begin{proof}[Proof of Lemma~\ref{lemidem}] The assumption is $P(XY\leq t)=P(X\leq t)$ for any $t$. Note that
\begin{eqnarray*}
P(X=0)=P(XY=0) &=& P(X=0)+P(Y=0)-P(X=Y=0) \\	
			   &=& P(X=0) [2-P(X=0)]
\end{eqnarray*}
so that $P(X=0)\in \{ 0,1\}$, and necessarily $P(X=0)=0$ since $E(X)>0$. Then by assumption $\log X + \log Y \stackrel{d}{=} \log X$. If $\varphi(t):=E(e^{it\log X})$ denotes the characteristic function of $\log X$, this entails $[\varphi(t)]^2 = \varphi(t)$ for any $t\in \R$, by independence. Thus, for any $t\in\R$, $\varphi(t)\in \{0,1\}$. Noting that $\varphi(0)=1$ and $\varphi$ is continuous entails that necessarily $\varphi\equiv 1$, since $\varphi(\R)$ must be a path-connected subset of $\{0,1\}$. As a consequence, $\log X=0$ almost surely, completing the proof.
\end{proof}

\begin{proof}[Proof of Proposition~\ref{propidem}] Let $\norm\cdot_{F}$ be an idempotent element for the multiplication of $F$-norms, with generator $(X_1,\ldots,X_d)$. Let $(Y_1,\ldots,Y_d)$ be an independent copy of this rv. Since $\norm\cdot_{F}$ is idempotent, we have, for any $\bfx=(x_0,x_1,\dots,x_d)\in\R^{d+1}$,
$$
E(\max(\abs{x_0}, \abs{x_1}X_1 Y_1,\dots,\abs{x_d}X_d Y_d)) = E(\max(\abs{x_0}, \abs{x_1}X_1,\dots,\abs{x_d}X_d)).
$$
By Theorem~\ref{theo:F-norm characterizes distribution}, we find that $X_i Y_i \stackrel{d}{=} X_i$, for each $i\in \{1,\ldots,d\}$. Conclude by applying Lemma~\ref{lemidem}.
\end{proof}
That $F$-norms can be multiplied in the way we have described here constitutes a motivation for our way of extending the notion of $F$-norms to not necessarily nonnegative rvs, which we describe in the next section.
\section{$F$-norms of general random vectors}
\label{sec:generalrv}

The concept of $F$-norms focuses on the distribution of an arbitrary multivariate rv with nonnegative and integrable components. Our purpose here is to show how we can also define, in a sensible way, a concept of $F$-norms for a rv whose components can attain negative values, under an integrability condition.

Let $\bfX=(X_1,\dots,X_d)$ be an arbitrary rv satisfying $E(\exp(X_i))<\infty$, $1\le i\le d$. Then $\bfY:=\exp(\bfX)=(\exp(X_1),\dots,\exp(X_d))$ generates an $F$-norm $\norm\cdot_{F}^{\textrm{exp}}$. As the function $x\mapsto\exp(x)$ is a bijection from the real line onto the interval $(0,\infty)$, the distribution of $\bfX$ is characterized by the $F$-norm $\norm\cdot_{F}^{\textrm{exp}}$, which we call a {\it log $F$-norm}.

\begin{exam}[Multivariate normal distribution]
\upshape Put $Z:= \exp(X-\sigma^2/2)$, where $X$ follows the univariate normal distribution $N(0,\sigma^2)$. The rv $Z$ is log-normal distributed with $E(Z)=1$. The log $F$-norm of $X-\sigma^2/2$ is then just a $D$-norm and equals, for $x,y>0$,
$$
\norm{(x,y)}_F^{\textrm{exp}} = E\left(\max(x,yZ)\right)=x \Phi\left(\frac{\sigma}{2} + \frac{\log(x/y)}{\sigma} \right) + y \Phi\left(\frac{\sigma}{2} + \frac{\log(y/x)}{\sigma} \right),
$$
which is the so-called {\it H\"{u}sler-Reiss} $D$-norm with parameter $\sigma^2>0$~\citep[see][]{falk2019}; by $\Phi$ we denote the df of the standard normal distribution on $\R$. As a consequence, the normal distribution $N(-\sigma^2/2,\sigma^2)$ of $X-\sigma^2/2$ is characterized by the norm $\norm{\cdot}_F^{\textrm{exp}}$.

More generally, the log $F$-norm of the normal distribution $N(\mu,\sigma^2)$ with arbitrary $\mu\in\R$ and $\sigma^2>0$ is, for $x,y>0$,
\begin{align*}
\norm{(x,y)}_F^{\textrm{exp}}&= E\left(\max\left(x,y\exp\left(\mu+\frac{\sigma^2}2\right)Z\right)\right)\\
&= x\Phi\left(\frac{\log(x/y)-\mu}\sigma\right) \! + y \exp\left(\mu+\frac{\sigma^2}2\right) \! \Phi\left(\sigma
 + \frac{\log(y/x)+\mu}\sigma\right).
\end{align*}
By Corollary~\ref{corclassif}, we should find back the log-normal df from this $F$-norm by differentiating $\norm{(t,1)}_F^{\textrm{exp}}$ on $(0,\infty)$. Clearly
\begin{align*}
\frac{d}{dt} (\norm{(t,1)}_F^{\textrm{exp}}) &= \Phi\left(\frac{\log(t)-\mu}\sigma\right) + \frac{1}{\sigma}\Phi'\left(\frac{\log(t)-\mu}\sigma\right) \\
							  &- \frac{1}{t\sigma}\exp\left(\mu+\frac{\sigma^2}2\right) \Phi'\left(\sigma-\frac{\log(t)-\mu}\sigma\right).
\end{align*}
Note also that
\begin{align*}
\Phi'\left(\sigma-\frac{\log(t)-\mu}\sigma\right) &= \frac{1}{\sqrt{2\pi}} \exp\left(-\frac{1}{2} \left[ \sigma - \frac{\log(t) - \mu}{\sigma} \right]^2 \right) \\
											      &= t \exp\left( - \mu - \frac{\sigma^2}2\right) \times \Phi'\left(\frac{\log(t)-\mu}\sigma\right)
\end{align*}
to find, as expected:
$$
\frac{d}{dt} (\norm{(t,1)}_F^{\textrm{exp}}) = \Phi\left(\frac{\log(t)-\mu}\sigma\right).
$$
\end{exam}
Combining the discussion we have developed in the previous example with Theorem~\ref{theoWass} leads, without any further calculation, to the following immediate result. This serves as a further example of how the asymptotic results in Section~\ref{seclim} may be used to establish asymptotic theory.
\begin{cor}
\label{corlognor}
Let $(\mu_n)$, $(\sigma_n)$ be real-valued sequences such that $\mu_n\to \mu$ and $\sigma_n\to\sigma>0$. Then:
\begin{itemize}
\item The sequence of log-normal distributions with parameters $\mu_n$ and $\sigma_n^2$ converges to the log-normal distribution with parameters $\mu$ and $\sigma^2$ in the Wasserstein metric.
\item The sequence $G_n$ of normal distributions with parameters $\mu_n$ and $\sigma_n^2$ converges in distribution to the normal distribution $G$ with parameters $\mu$ and $\sigma^2$, and the moments of $G_n$ converge to those of $G$.
\end{itemize}
\end{cor}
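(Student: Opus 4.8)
The plan is to read both assertions directly off the explicit formula for the log $F$-norm of a normal distribution obtained in the preceding example, combined with Theorem~\ref{theoWass}, exactly as the sentence introducing the corollary advertises. Recall that the log $F$-norm of $N(\mu,\sigma^2)$ is precisely the $F$-norm $\norm\cdot_F^{\textrm{exp}}$ generated by the log-normal distribution with parameters $\mu$ and $\sigma^2$, which satisfies $(\mathcal{H})$ since it is a.s. positive with finite positive expectation $\exp(\mu+\sigma^2/2)$. Writing $\norm\cdot_{F_n}^{\textrm{exp}}$ for the log $F$-norm of $N(\mu_n,\sigma_n^2)$, the first step is to establish that $\norm\cdot_{F_n}^{\textrm{exp}}\to\norm\cdot_F^{\textrm{exp}}$ pointwise.

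For this I would use radial symmetry of $F$-norms to reduce to proving convergence at every $(x,y)\geq\bfzero$. When $y=0$ one has $\norm{(x,0)}_{F_n}^{\textrm{exp}}=|x|=\norm{(x,0)}_F^{\textrm{exp}}$, so there is nothing to prove. When $y>0$, the formula
\[
\norm{(x,y)}_F^{\textrm{exp}}= x\,\Phi\!\left(\frac{\log(x/y)-\mu}{\sigma}\right)+y\exp\!\left(\mu+\frac{\sigma^2}{2}\right)\Phi\!\left(\sigma+\frac{\log(y/x)+\mu}{\sigma}\right)
\]
exhibits $\norm{(x,y)}_F^{\textrm{exp}}$ as a continuous function of the parameter $(\mu,\sigma)\in\R\times(0,\infty)$; the boundary point $x=0$ is handled directly, letting $\log(x/y)\to-\infty$ and giving $\norm{(0,y)}_F^{\textrm{exp}}=y\exp(\mu+\sigma^2/2)=y\,E(\exp(X))$. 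Since $\sigma_n\to\sigma>0$ forces $\sigma_n>0$ for all large $n$, pointwise convergence $\norm\cdot_{F_n}^{\textrm{exp}}\to\norm\cdot_F^{\textrm{exp}}$ follows at once from $\mu_n\to\mu$ and $\sigma_n\to\sigma$. The limit is, by the remark above and because $\sigma>0$, a genuine $F$-norm, so Theorem~\ref{theoWass} yields convergence of the log-normal distributions with parameters $(\mu_n,\sigma_n^2)$ to the log-normal with parameters $(\mu,\sigma^2)$ in the Wasserstein metric, which is the first claim.

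The second claim I would deduce from the first with essentially no further work. Wasserstein convergence of the log-normals entails their weak convergence, i.e.\ $\exp(X_n)\Rightarrow\exp(X)$ when $X_n\sim N(\mu_n,\sigma_n^2)$ and $X\sim N(\mu,\sigma^2)$; applying the continuous mapping theorem to the continuous map $\log\colon(0,\infty)\to\R$ gives $X_n\Rightarrow X$, so the normal distributions $G_n$ converge weakly to $G$. For the convergence of moments, I would simply invoke the fact that every moment of $N(\mu,\sigma^2)$ is a polynomial in $\mu$ and $\sigma^2$, hence a continuous function of the parameters, so that $\mu_n\to\mu$ and $\sigma_n\to\sigma$ give convergence of all moments of $G_n$ to those of $G$.

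The only place demanding any care is the first step: one must check that pointwise convergence of the $F$-norms holds on the whole of $\R^2$ --- including the degenerate directions $y=0$ and the boundary point $x=0$ --- and, crucially for the applicability of Theorem~\ref{theoWass}, that the limiting object is an $F$-norm rather than a mere seminorm. Both are guaranteed here precisely because $\sigma>0$, which keeps the limiting log-normal nondegenerate and integrable; this is the reason the hypothesis $\sigma>0$ (and not merely $\sigma\geq 0$) is imposed.
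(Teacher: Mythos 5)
Your proposal is correct and follows exactly the route the paper intends: the paper gives no written proof, declaring the corollary ``immediate'' from the explicit log $F$-norm formula in the preceding example together with Theorem~\ref{theoWass}, and your write-up is a faithful and careful expansion of that (continuity of the formula in $(\mu,\sigma)$, the boundary cases, and the fact that the limit is a genuine $F$-norm because $\sigma>0$). The only step lying slightly outside the $F$-norm machinery is the convergence of the moments of $G_n$, which you rightly settle by the elementary observation that each moment of $N(\mu,\sigma^2)$ is a polynomial in the parameters --- Wasserstein convergence of the log-normals alone would only deliver the first moments of the log-normals, so some such side argument is genuinely needed.
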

More generally, if $\bfX$ follows a multivariate normal distribution $N(\bfmu,\Sigma)$ with mean vector $\bfmu\in\R^d$ and covariance matrix $\Sigma= (\sigma_{ij})_{1\le i,j\le d}$, then each component $Y_i=\exp(X_i)$ is log-normal distributed with mean $E(Y_i)=\exp(\mu_i+\sigma_{ii}/2)$. In analogy to the $D$-norm generated by the normalized rv $\bfZ=\bfY/E(\bfY)$ and called a {\it H\"{u}sler-Reiss} $D$-norm~\citep[see][]{falk2019}, we call the $F$-norm corresponding to $\bfY$ a {\it H\"{u}sler-Reiss} $F$-norm. It characterizes the normal distribution $N(\bfmu,\Sigma)$.

The concept of log $F$-norms for rvs with an arbitrary sign is not adapted solely to Gaussian distributions, as we show in the following examples.
\begin{exam}[Gumbel distribution]
\upshape Let $X$ have the standard negative Gumbel distribution, {\it i.e.}
$$
P(X\leq t) = 1-e^{-e^t}, \ t\in \R.
$$
Then $\exp X$ has a unit exponential distribution, and therefore the log $F$-norm characterizing the standard negative Gumbel distribution is
$$
\norm{(x_0,x_1)}_F^{\textrm{exp}} =  |x_0| + |x_1|\exp\left(-\frac{|x_0|}{|x_1|} \right)
$$
when $x_1\neq 0$, and $|x_0|$ otherwise (see Example~\ref{exexpo}).
\end{exam}

\begin{exam}[On the central limit theorem]
\upshape Let $\bfX^{(1)},\bfX^{(2)},\dots$ be iid copies of a centered rv $\bfX=(X_1,\dots,X_d)$ having covariance matrix $\Sigma$, and a finite moment generating function in a neighborhood of the origin, {\it i.e.} there exists $\varepsilon>0$ with $\varphi_j(t):=E(\exp(tX_j))<\infty$ for any $\abs t< \varepsilon$ and $1\le j\le d$. The multivariate central limit theorem and continuous mapping theorem imply
\begin{equation}\label{eqn:convergence of log F-norms}
\exp\left(\frac 1{\sqrt{n}}\sum_{i=1}^n\bfX^{(i)}\right) \stackrel{d}{\longrightarrow} \exp(\bfxi),
\end{equation}
where $\bfxi=(\xi_1,\dots,\xi_d)$ follows a multivariate normal distribution with mean vector zero and covariance matrix $\Sigma$. Besides, we have 
$$
\mathbb{E}\left[ \exp\left(\frac{2}{\sqrt{n}}\sum_{i=1}^n\bfX^{(i)}_j\right) \right] = \mathbb{E}\left[ \prod_{i=1}^n \exp\left(\frac{2}{\sqrt{n}} \bfX^{(i)}_j \right) \right] = \left[ \varphi_j(2/\sqrt{n}) \right]^n.
$$
Since $X_j$ is centered with variance $\Sigma_{jj}$, we have by a Taylor expansion
$$
\mathbb{E}\left[ \exp\left(\frac{2}{\sqrt{n}}\sum_{i=1}^n\bfX^{(i)}_j\right) \right] = \left[ 1 + \frac{2}{n} \Sigma_{jj} + \operatorname{o}\left(\frac{1}{n}\right) \right]^n \to e^{2\Sigma_{jj}}. 
$$
It follows that the sequence
\[
\exp\left(\frac 1{\sqrt{n}}\sum_{i=1}^n\bfX^{(i)}_j\right), \ n\geq 1,
\]
has a bounded second moment and thus is \textit{uniformly integrable}~\citep[see {\it e.g.}][]{bil1999} for each $j=1,\dots,d$. This entails convergence of the sequence of its first moments and, combined with~\eqref{eqn:convergence of log F-norms} and Theorem~\ref{theoWass}, pointwise convergence of the generated $\log F$-norms, {\it i.e.}
\begin{align*}
&E\left(\max\left(x_0,x_1\exp\left(\frac 1{\sqrt{n}}\sum_{i=1}^n\bfX^{(i)}_1\right), \dots, x_d\exp\left(\frac 1{\sqrt{n}}\sum_{i=1}^n\bfX^{(i)}_d \right)\right)\right)\\
&\to E\left(\max\left(x_0,x_1\exp(\xi_1),\dots,x_d\exp(\xi_d)\right) \right) \ \mbox{ as } \ n\to\infty
\end{align*}
for each $x_0,x_1,\dots,x_d\ge 0$. We thus have a convergence of $F$-norms akin to the central limit theorem.
%
\end{exam}
We could, of course, have used in place of the exponential function any one-to-one increasing transformation from $\R$ to $(0,\infty)$ in order to define an $F$-norm for general rvs. Another potential transformation would have been
$$
x\mapsto \frac{\pi}{2} + \arctan x,
$$
which has the appeal of avoiding any integrability condition on the rv $\bfX$. The exponential function, however, interacts well with our notion of product of $F$-norms, in the sense that
$$
\norm{\cdot}_{\bfX}^{\textrm{exp}} * \norm{\cdot}_{\bfY}^{\textrm{exp}} = \norm{\cdot}_{\bfX + \bfY}^{\textrm{exp}}
$$
if $\bfX$ and $\bfY$ are independent: a product of two log $F$-norms is the log $F$-norm corresponding to the convolution of their individual distributions. 

\section{Geometry of $F$-norms}
\label{sec:geometry}

%
%
%

The original motivation for constructing $F$-norms was to combine the distributional properties of a max-CF with the structure of a $D$-norm into a single mathematical object. We have so far concentrated on the information that $F$-norms bring about multivariate distributions. We use here the geometry of the $F$-norms to find yet other different objects who summarize a multivariate distribution.

Since any norm $\norm\cdot$ on $\R^{d+1}$ is homogeneous, an immediate consequence is that each $F$-norm $\norm\cdot_F$ is uniquely determined by its values on the unit sphere for $\norm\cdot$, namely $S_{\norm{\cdot}}:=\set{\bfu\in\R^{d+1}:\,\norm{\bfu}=1}$: to put it differently, we have for $\bfx\in\R^{d+1}$, $\bfx\not=\bfzero$,
\begin{equation}
\norm{\bfx}_F=\norm{\bfx}\norm{\frac{\bfx}{\norm{\bfx}}}_F,
\end{equation}
with $\bfx/\norm{\bfx}\in S_{\norm{\cdot}}$. By choosing $\norm\cdot=\norm\cdot_1$ and using the radial symmetry of the $L^1$-norm and of $F$-norms, we find that we need only consider the values of $\norm\cdot_F$ on the part of the sphere $S_{\norm{\cdot}}$ contained in $[0,\infty)^{d+1}$. In other words, each df $F$ of a rv $\bfX$ satisfying~$(\mathcal{H})$ is characterized by the function
\[
A(\bft) :=\norm{\left(1-\sum_{i=1}^d t_i, t_1,\dots,t_d \right)}_F, \ \bft=(t_1,\dots,t_d),
\]
defined on $\Delta_1:=\set{\bft\in[0,1]^d:\,\sum_{i=1}^d t_i\le 1}$. This construction is similar to that of the {\it Pickands dependence function} in multivariate extreme value theory~\citep[see {\it e.g.}][]{gudseg2010}, and we therefore call the function $A$ the Pickands dependence function of the $F$-norm $\norm{\cdot}_F$.
Let us briefly mention here that, based on a sample of copies of $\bfX$, we can estimate this Pickands dependence function by an empirical version, just as we did in Section~\ref{seclim} for the full $F$-norm: let $\bfX^{(1)},\dots,\bfX^{(n)}$ be iid copies of a rv $\bfX$ satisfying~$(\mathcal{H})$. Put, for $\bft\in\Delta_1$, with $t_0:=1-\sum_{i=1}^d t_i$,
\[
\widehat{A}_n(\bft):= \frac{1}{n} \sum_{i=1}^n \max\left(t_0,t_1X_1^{(i)},\dots,t_dX_d^{(i)} \right),
\]
which is that (random) Pickands dependence function which characterizes the empirical df $\widehat{F}_n$. The asymptotic properties of $\widehat{A}_n$ follow directly from our asymptotic results in Section~\ref{seclim}: since $\Delta_1$ is compact, we get, by Theorem~\ref{lem:uniform convergence on compact intervals},
\[
\sup_{\bft\in\Delta_1}\abs{\widehat{A}_n(\bft)-A(\bft)} \to 0 \ \mbox{ a.s.},
\]
and, by the multivariate extension of Theorem~\ref{theoCLT1} mentioned at the end of Section~\ref{seclim}, we have
\[
\sqrt{n} \left(\widehat{A}_n(\bft)-A(\bft)\right) \to \mathcal{S}(\bft)
\]
weakly in the space of the continuous functions on the unit simplex in $\R^{d+1}$, where $\mathcal{S}$ is a Gaussian process.


We now explore how, instead of characterizing an $F$-norm by a function such as its Pickands dependence function, we can identify it by a compact set which summarizes the geometry of an $F$-norm. Recall that an $F$-norm is characterized by its values on any sphere $S_{\norm{\cdot}}$, where $\norm\cdot$ is an arbitrary norm on $\R^{d+1}$. By choosing $\norm{\cdot} = \norm{\cdot}_F$ and using the radial symmetry of any $F$-norm, we obtain the following corollary.
\begin{cor}
\label{propsph}
Each $F$-norm $\norm\cdot_F$ on $\R^{d+1}$ is characterized by the part of its unit sphere contained in the positive orthant of $\R^{d+1}$, that is:
\[
S_{\norm\cdot_F}^+:= S_{\norm{\cdot}_F} \cap [0,\infty)^{d+1} = \set{\bfx\ge\bfzero\in\R^{d+1}:\,\norm{\bfx}_F=1}.
\]
\end{cor}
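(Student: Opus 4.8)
The plan is to show that the assignment $\norm\cdot_F \mapsto S_{\norm\cdot_F}^+$ is injective on the set of $F$-norms: concretely, if two $F$-norms $\norm\cdot_F$ and $\norm\cdot_G$ satisfy $S_{\norm\cdot_F}^+ = S_{\norm\cdot_G}^+$, then $\norm\cdot_F = \norm\cdot_G$ as functions on $\R^{d+1}$ (whence $F = G$ by Theorem~\ref{theo:F-norm characterizes distribution}). The whole argument rests on the classical fact that a norm is the Minkowski gauge of its unit sphere, combined with the radial symmetry special to $F$-norms, which lets us restrict attention to the positive orthant.

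First I would reduce the problem to $[0,\infty)^{d+1}$. Since both norms are radially symmetric, $\norm{\bfx}_F = \norm{\abs{\bfx}}_F$ and likewise for $G$, where $\abs{\bfx} \ge \bfzero$; hence it is enough to prove $\norm{\bfy}_F = \norm{\bfy}_G$ for every $\bfy \ge \bfzero$. The value at $\bfy = \bfzero$ is $0$ for both, so I fix $\bfy \ge \bfzero$ with $\bfy \ne \bfzero$.

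Next I would recover the common value from the shared set $S := S_{\norm\cdot_F}^+ = S_{\norm\cdot_G}^+$ by intersecting it with the ray through $\bfy$. For any $F$-norm, homogeneity gives $\norm{t\bfy}_F = t\norm{\bfy}_F$, a continuous and strictly increasing function of $t$ which sweeps out $(0,\infty)$; therefore there is a unique $t > 0$ with $\norm{t\bfy}_F = 1$, namely $t = 1/\norm{\bfy}_F$, and the corresponding point $t\bfy$ is nonnegative and thus lies in $S_{\norm\cdot_F}^+$. This shows that the ray $\{t\bfy : t > 0\}$ meets the set $S$ in exactly one point $\bfu$. Viewing $S$ as the positive-orthant sphere of $\norm\cdot_F$, this point is $\bfu = \bfy/\norm{\bfy}_F$; viewing it as that of $\norm\cdot_G$, the same point is $\bfu = \bfy/\norm{\bfy}_G$. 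Equating these and using $\bfy \ne \bfzero$ yields $\norm{\bfy}_F = \norm{\bfy}_G$, which completes the reduction and hence the proof.

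I expect the only delicate point to be the uniqueness of the intersection of each ray with $S_{\norm\cdot_F}^+$, which is precisely what guarantees that no information is lost by discarding the part of the full sphere $S_{\norm\cdot_F}$ lying outside the positive orthant. Here I would lean on strict monotonicity of $t \mapsto t\norm{\bfy}_F$ together with the elementary observation that a nonnegative vector scaled by a positive constant stays in $[0,\infty)^{d+1}$, so that the single intersection point genuinely belongs to $S_{\norm\cdot_F}^+$. Rays lying on the boundary of the orthant---where some coordinates of $\bfy$ vanish, such as $\bfy = (1,0,\dots,0)$---are covered verbatim by the same reasoning and create no special case.
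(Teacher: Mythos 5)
Your proof is correct and follows essentially the same route as the paper: the corollary there is derived from the observation that a norm is determined (via homogeneity) by its unit sphere together with the radial symmetry of $F$-norms, and your ray-intersection (Minkowski gauge) argument simply makes that two-line remark explicit. Combined with Theorem~\ref{theo:F-norm characterizes distribution} to pass from the norm back to the df $F$, this is exactly the intended reasoning.
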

This corollary provides a compact set characterizing any multivariate distribution with nonnegative, nonzero and integrable components. For such distributions, it is therefore an alternative to the lift zonoid studied by~\cite{kosmos1998} and~\cite{mosler2002}. The next two examples show how this set can be computed in practice.
\begin{exam}[Unit sphere for the uniform $F$-norm]
\upshape Let $F$ be the uniform distribution on $(0,1)$. We know from Example~\ref{exunif} that this distribution is characterized by the bivariate $F$-norm given by
\[
\forall x_0,x_1\geq 0, \ \norm{(x_0,x_1)}_F=\begin{cases}
x_0,&\text{if }x_1\le x_0,\\
\dfrac{x_0^2+x_1^2}{2x_1}, &\text{if }x_1>x_0.
\end{cases}
\]
As a consequence, the set $S_{\norm\cdot_F}^+$ corresponding to this norm is the set
\[
S_{\norm\cdot_F}^+ = \set{(1,x_1):\,x_1\in[0,1]} \cup \set{\left( x_0,1+\sqrt{1-x_0^2} \right):\,x_0\in[0,1)}.
\]
This set is represented in Figure~\ref{unitsphunif}.
\end{exam}

\begin{figure}
\begin{center}
\includegraphics[width=12cm,height=12cm]{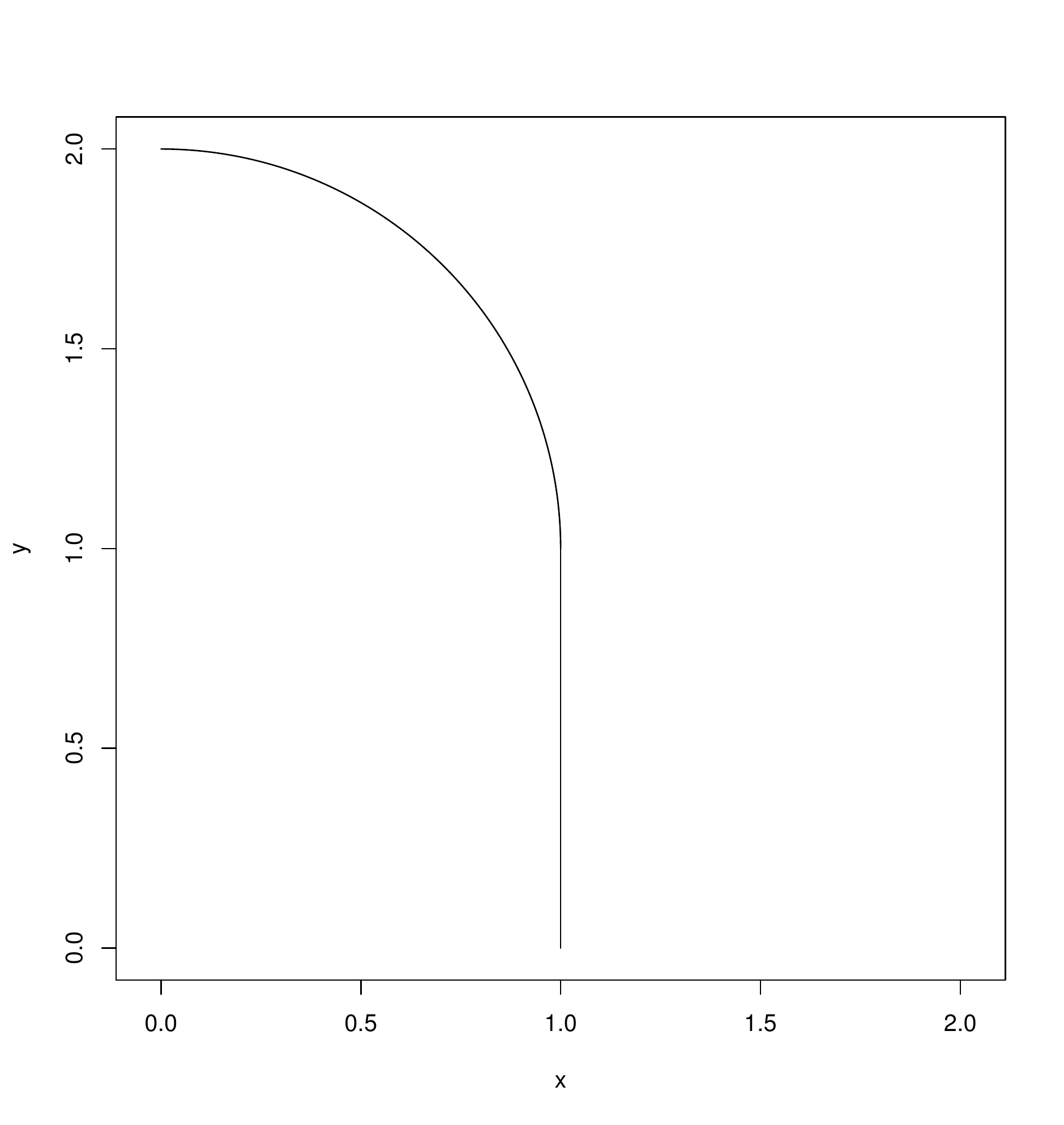}
\end{center}
\caption{The set $S_{\norm\cdot_F}^+$ for the standard uniform distribution.}
\label{unitsphunif}
\end{figure}

\begin{exam}[Unit sphere for the H\"usler-Reiss norm]
\label{exHusRei}
\upshape Let $\norm{\cdot}_F$ be the bivariate H\"usler-Reiss norm with parameter $\sigma^2$, that is
$$
\forall x,y>0, \ \norm{(x,y)}_F = x \Phi\left(\frac{\sigma}{2} + \frac{\log(x/y)}{\sigma} \right) + y \Phi\left(\frac{\sigma}{2} + \frac{\log(y/x)}{\sigma} \right).
$$
Clearly $(1,0)$ and $(0,1)$ belong to $S_{\norm\cdot_F}^+$. If $x,y>0$ are such that $(x,y)\in S_{\norm\cdot_F}^+$ then
$$
\Phi\left(\frac{\sigma}{2} + \frac{\log(x/y)}{\sigma} \right) + \frac{y}{x} \Phi\left(\frac{\sigma}{2} + \frac{\log(y/x)}{\sigma} \right) = \frac{1}{x},
$$
which implies, if $\lambda:=y/x \in (0,\infty)$, that
\begin{align*}
x &= \frac{1}{\Phi\left(\dfrac{\sigma}{2} - \dfrac{\log(\lambda)}{\sigma} \right) + \lambda \, \Phi\left(\dfrac{\sigma}{2} + \dfrac{\log(\lambda)}{\sigma} \right)}, \\[5pt]
\mbox{and } \ y &= \frac{\lambda}{\Phi\left(\dfrac{\sigma}{2} - \dfrac{\log(\lambda)}{\sigma} \right) + \lambda \, \Phi\left(\dfrac{\sigma}{2} + \dfrac{\log(\lambda)}{\sigma} \right)}.
\end{align*}
It is readily checked that conversely, any point of the form
$$
\frac{1}{\Phi\left(\dfrac{\sigma}{2} - \dfrac{\log(\lambda)}{\sigma} \right) + \lambda \, \Phi\left(\dfrac{\sigma}{2} + \dfrac{\log(\lambda)}{\sigma} \right)} (1,\lambda), \ \mbox{ for } \ \lambda\in (0,\infty)
$$
belongs to $S_{\norm\cdot_F}^+$, so that we have a parametrization of $S_{\norm\cdot_F}^+$ making it possible to represent this set. This is done in Figure~\ref{unitsphHR} for various values of $\sigma$. One can observe in this Figure that, as should be apparent from the parametrization, the limit $\sigma\downarrow 0$ produces the part of the sphere of the sup-norm on $\R^2$ contained in the upper right quadrant, while the limit $\sigma\to \infty$ yields the segment $\{(x,1-x), \ 0\leq x\leq 1\}$, corresponding to the sphere of the $L^1-$norm.
\end{exam}

\begin{figure}
\begin{center}
\includegraphics[width=12cm,height=12cm]{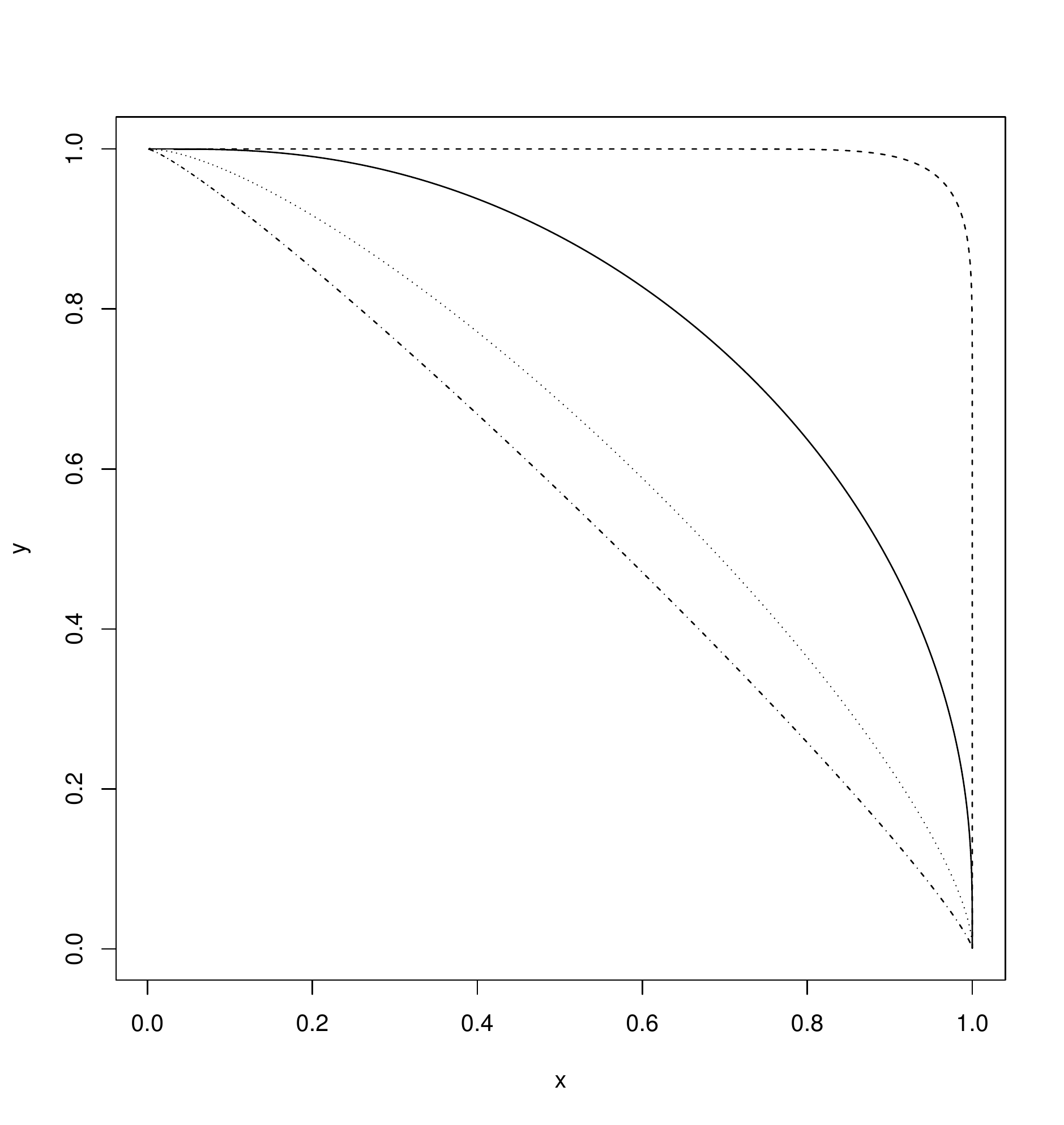}
\end{center}
\caption{The set $S_{\norm\cdot_F}^+$ for the bivariate H\"{u}sler-Reiss $F$-norm. Dashed curve: $\sigma=0.1$, solid curve: $\sigma=1$, dotted curve: $\sigma=2$, dashed-dotted curve: $\sigma=3$.}
\label{unitsphHR}
\end{figure}

Example~\ref{exHusRei} suggests that the convergence of $F$-norms, and thus convergence of the pertaining distributions in the Wasserstein metric, is at least informally linked to the convergence of their unit spheres. To make this intuition rigorous, we recall the definition of a {\it Hausdorff metric}. If $\norm\cdot$ is an arbitrary norm on $\R^{d+1}$ and $A$, $B$ are two subsets of $\R^{d+1}$, we let their $\norm\cdot$-Hausdorff distance to be 
$$
d_{H,\norm\cdot}(A,B)=\max\left\{ \sup_{\bfy\in B} \inf_{\bfx\in A} \| \bfx-\bfy \|, \ \sup_{\bfx\in A} \inf_{\bfy\in B} \| \bfx-\bfy \| \right\}. 
$$
Intuitively, two sets $A$ and $B$ are therefore close in the $\norm\cdot$-Hausdorff metric if and only if each point in $A$ (resp. $B$) is close, in terms of $\norm\cdot$, to at least one point in $B$ (resp. $A$). Such a distance may be infinite under no further assumptions on $A$ and $B$, but is always finite if $A$ and $B$ are bounded. With this definition in mind, we have the following result. 
\begin{theorem}
\label{theoHaus}
Pointwise convergence of a sequence of $F$-norms $\norm\cdot_{F_n}$ to an $F$-norm $\norm\cdot_{F}$ on $\R^{d+1}$ is equivalent to convergence of the sequence of sets $S_{\norm\cdot_{F_{\scalebox{.7}{$\scriptscriptstyle n$}}}}^+$ to $S_{\norm\cdot_F}^+$ in any Hausdorff metric $d_{H,\norm\cdot}$ on $\R^{d+1}$.
\end{theorem}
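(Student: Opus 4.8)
The plan is to exploit the radial structure common to both objects. Since all norms on $\R^{d+1}$ are equivalent, the Hausdorff metrics $d_{H,\norm\cdot}$ attached to different norms are mutually bi-Lipschitz, so convergence in one is convergence in all; I may therefore fix the reference norm to be $\norm\cdot_1$ throughout. Write $\Delta:=\set{\bfu\ge\bfzero\in\R^{d+1}:\,\norm\bfu_1=1}$ for the unit simplex. For any $F$-norm, the radial map $\Phi_F:\bfu\mapsto \bfu/\norm\bfu_F$ is a bijection from $\Delta$ onto $S^+_{\norm\cdot_F}$, because every nonzero $\bfx\ge\bfzero$ lies on a unique ray through the origin inside the convex, conical positive orthant, and that ray meets the sphere exactly once. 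The crucial elementary observation is that a matched bijection always bounds the Hausdorff distance from above: for any two $F$-norms,
$$
d_{H,\norm\cdot_1}\!\left(S^+_{\norm\cdot_{F_n}},S^+_{\norm\cdot_F}\right)\le \sup_{\bfu\in\Delta}\norm{\Phi_{F_n}(\bfu)-\Phi_F(\bfu)}_1=\sup_{\bfu\in\Delta}\norm\bfu_1\,\abs{\frac{1}{\norm\bfu_{F_n}}-\frac{1}{\norm\bfu_F}}.
$$
Everything then reduces to controlling the scalar quantities $\norm\bfu_{F_n}$ and $\norm\bfu_F$ on the compact set $\Delta$.

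For the implication from pointwise convergence to Hausdorff convergence, I would first invoke the fact that a sequence of finite convex functions on $\R^{d+1}$ converging pointwise to a finite convex limit converges uniformly on compact sets~\citep[see][Theorem~10.8]{rock70}. Applied to the norms $\norm\cdot_{F_n}$, this yields $\sup_{\bfu\in\Delta}\abs{\norm\bfu_{F_n}-\norm\bfu_F}\to0$. Since $\norm\cdot_F$ is continuous and strictly positive on the compact set $\Delta$, which excludes the origin, it is bounded below there by some $\delta>0$, and uniform convergence forces $\norm\bfu_{F_n}\ge\delta/2$ on $\Delta$ for all large $n$. Substituting these two facts into the displayed bound controls its right-hand side by a constant multiple of $\sup_{\bfu\in\Delta}\abs{\norm\bfu_{F_n}-\norm\bfu_F}\to0$, which gives $d_{H,\norm\cdot_1}\to0$.

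For the converse I would first recover uniform control of the norms $\norm\cdot_{F_n}$ from the geometric hypothesis alone. The limit set $S^+_{\norm\cdot_F}$ is compact, bounded, and bounded away from $\bfzero$, hence contained in an annulus $\set{r_0\le\norm\bfx_1\le R_0}$ with $r_0>0$. If $\eps_n:=d_{H,\norm\cdot_1}(S^+_{\norm\cdot_{F_n}},S^+_{\norm\cdot_F})\to0$, then for large $n$ every point of $S^+_{\norm\cdot_{F_n}}$ lies within $\eps_n$ of $S^+_{\norm\cdot_F}$, so $S^+_{\norm\cdot_{F_n}}$ is trapped in the slightly larger annulus $\set{r_0/2\le\norm\bfx_1\le R_0+1}$; reading this off along rays gives the uniform equivalence $\norm\bfx_1/(R_0+1)\le\norm\bfx_{F_n}\le 2\norm\bfx_1/r_0$ for all large $n$. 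With this in hand, I would fix $\bfx\ge\bfzero$, $\bfx\ne\bfzero$, set $\bfv:=\bfx/\norm\bfx_F\in S^+_{\norm\cdot_F}$, and, by Hausdorff convergence, choose $\bfy_n\in S^+_{\norm\cdot_{F_n}}$ with $\norm{\bfv-\bfy_n}_1\le\eps_n\to0$. Then, using $\norm{\bfy_n}_{F_n}=1$ together with the uniform upper bound,
$$
\abs{\norm\bfv_{F_n}-1}=\abs{\norm\bfv_{F_n}-\norm{\bfy_n}_{F_n}}\le\norm{\bfv-\bfy_n}_{F_n}\le\frac{2}{r_0}\norm{\bfv-\bfy_n}_1\to0,
$$
so by homogeneity $\norm\bfx_{F_n}=\norm\bfx_F\,\norm\bfv_{F_n}\to\norm\bfx_F$. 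Radial symmetry of $F$-norms then extends this to all $\bfx\in\R^{d+1}$, yielding pointwise convergence.

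The routine parts are the two estimates fed into the matched-bijection bound; the genuine content, and the step I expect to demand the most care, is extracting uniform control of the norms from each hypothesis — the locally uniform convergence of convex functions in the first direction, and the annulus-trapping argument that upgrades Hausdorff proximity to a uniform equivalence of the $\norm\cdot_{F_n}$ in the second. Once those are secured, both implications follow from the homogeneity and radial symmetry built into every $F$-norm.
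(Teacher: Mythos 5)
Your proof is correct, but it takes a genuinely different route from the paper's in both directions. For the forward implication, the paper stays inside the $F$-norm machinery: it invokes Theorem~\ref{theoWass} to get Wasserstein convergence, deduces convergence of the componentwise means, uses Proposition~\ref{propbounds} to trap all the spheres in a common compact set, and then applies the Lipschitz estimate~\eqref{FnormLip} to get uniform convergence of the norms there. You replace all of this with a single convex-analytic fact — pointwise convergence of finite convex functions implies uniform convergence on compacts — which is cleaner and strictly more general (it would work for any sequence of norms, with no probabilistic input), at the cost of importing a nontrivial external theorem where the paper only reuses its own estimates. For the converse, the paper makes the slicker move of choosing the reference Hausdorff metric to be the one induced by the \emph{limit} norm $\norm\cdot_F$ itself; the reverse triangle inequality for $\norm\cdot_F$ then gives $\abs{1-1/\norm{\bfx}_{F_n}}\to 0$ directly, with no uniform control of the $\norm\cdot_{F_n}$ needed. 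Because you fix the reference norm to be $\norm\cdot_1$ and apply the reverse triangle inequality for $\norm\cdot_{F_n}$, you must first establish the uniform comparison $\norm\cdot_{F_n}\le (2/r_0)\norm\cdot_1$; your annulus-trapping argument does this correctly (one should note in passing that the bound extends from the positive orthant to all of $\R^{d+1}$ by radial symmetry, which is what you implicitly use when estimating $\norm{\bfv-\bfy_n}_{F_n}$). Both implications are sound; the trade is extra geometric work in the converse against a more self-contained forward direction in the paper, versus a more portable convexity argument and a slightly longer converse in yours.
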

Our final result, relating convergence of distributions in the Wasserstein metric to convergence of unit spheres of $F$-norms in the Hausdorff metric, is now an immediate corollary of Theorems~\ref{theoWass},~\ref{theoHaus} and the radial symmetry of $F$-norms.
\begin{cor}
\label{corHaus}
If $F_n$, $F$ are multivariate dfs on $\R^d$ with nonnegative, nonzero and integrable components, then the convergence of $F_n$ to $F$ in the Wasserstein metric is equivalent to the convergence of the unit sphere of $\norm{\cdot}_{F_n}$ to the unit sphere of $\norm{\cdot}_F$ in any Hausdorff metric $d_{H,\norm\cdot}$ on $\R^{d+1}$.
\end{cor}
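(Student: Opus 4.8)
The plan is to obtain Corollary~\ref{corHaus} by chaining the two equivalences already established in Theorems~\ref{theoWass} and~\ref{theoHaus}, both of which are phrased relative to one and the same pivotal condition, namely the pointwise convergence of the sequence of $F$-norms $\norm\cdot_{F_n}$ to $\norm\cdot_F$. Call this pivot $(P)$. Theorem~\ref{theoWass} asserts that $(P)$ is equivalent to convergence of $F_n$ to $F$ in the Wasserstein metric, while Theorem~\ref{theoHaus} asserts that $(P)$ is equivalent to convergence of the positive-orthant caps $S^+_{\norm\cdot_{F_n}}$ to $S^+_{\norm\cdot_F}$ in any Hausdorff metric $d_{H,\norm\cdot}$. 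Transitivity of logical equivalence then immediately gives that Wasserstein convergence of $F_n$ to $F$ is equivalent to $d_{H,\norm\cdot}$-convergence of $S^+_{\norm\cdot_{F_n}}$ to $S^+_{\norm\cdot_F}$. If one reads ``unit sphere'' in the statement as the characterizing set $S^+_{\norm\cdot_F}$ of Corollary~\ref{propsph}, as the surrounding discussion in Section~\ref{sec:geometry} suggests, the proof stops here.

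It then remains only to pass, if necessary, from the cap $S^+_{\norm\cdot_F}$ to the full unit sphere $S_{\norm\cdot_F}$, and this is where radial symmetry enters. By radial symmetry, $\bfx\in S_{\norm\cdot_F}$ if and only if $\abs{\bfx}\in S^+_{\norm\cdot_F}$, so the full sphere is the finite union $S_{\norm\cdot_F}=\bigcup_{\bfe\in\{-1,1\}^{d+1}}\bfe\odot S^+_{\norm\cdot_F}$ of the reflected copies of $S^+_{\norm\cdot_F}$ across the coordinate hyperplanes, where $\bfe\odot\bfx$ denotes componentwise multiplication. I would therefore show that $d_{H,\norm\cdot}$-convergence of $S^+_{\norm\cdot_{F_n}}$ is equivalent to $d_{H,\norm\cdot}$-convergence of $S_{\norm\cdot_{F_n}}$, which closes the chain.

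For the forward implication I would use that the Hausdorff distance of finite unions is dominated by the maximum of the Hausdorff distances of the corresponding pieces, together with the fact that each reflection $\bfe\odot(\cdot)$ is linear, so that $d_{H,\norm\cdot}(\bfe\odot A,\bfe\odot B)=d_{H,\norm{\bfe\odot\,\cdot}}(A,B)$; since all norms on $\R^{d+1}$ are equivalent, every $d_{H,\norm\cdot}$ is comparable to every other, and convergence in one Hausdorff metric forces convergence in all of them. For the reverse implication, given $\bfx\in S^+_{\norm\cdot_{F_n}}\subset S_{\norm\cdot_{F_n}}$ I would choose a near point $\bfy\in S_{\norm\cdot_F}$ and replace it by $\abs{\bfy}$, which lies in $S^+_{\norm\cdot_F}$ by radial symmetry; since $\bfx\ge\bfzero$ one has $\abs{\bfx-\abs{\bfy}}\le\abs{\bfx-\bfy}$ componentwise, so monotonicity of the $L^1$-norm together with norm equivalence yields $\norm{\bfx-\abs{\bfy}}\le C\norm{\bfx-\bfy}$ for a fixed constant $C$, showing that the approximation survives the restriction to the positive orthant. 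Exchanging the roles of the two spheres, and using that $S_{\norm\cdot_{F_n}}$ is itself radially symmetric, supplies the other half of the Hausdorff distance.

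The main obstacle is precisely this reconciliation between the cap $S^+_{\norm\cdot_F}$ and the full sphere $S_{\norm\cdot_F}$: the union-of-reflections direction is routine, but recovering convergence of the caps from convergence of the full spheres is delicate, because intersecting with the closed orthant does not in general commute with Hausdorff limits. Radial symmetry is exactly what rescues it, by allowing the projection $\bfy\mapsto\abs{\bfy}$ onto the positive orthant without increasing $\norm\cdot$-distances beyond a fixed norm-equivalence factor. Everything else is a formal composition of Theorems~\ref{theoWass} and~\ref{theoHaus}.
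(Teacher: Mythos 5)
Your proposal is correct and follows essentially the same route as the paper, which derives Corollary~\ref{corHaus} directly by chaining Theorems~\ref{theoWass} and~\ref{theoHaus} through the common pivot of pointwise $F$-norm convergence and invoking radial symmetry to pass between the positive-orthant cap $S^+_{\norm\cdot_F}$ and the full unit sphere. Your explicit treatment of the cap-versus-full-sphere reconciliation (via the reflection decomposition and the projection $\bfy\mapsto\abs{\bfy}$) simply fills in details the paper leaves implicit in the phrase ``and the radial symmetry of $F$-norms.''
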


\begin{proof} We start by noting that since all norms are equivalent on $\R^{d+1}$, it is sufficient to prove the theorem for the Hausdorff metric $d_{H,\norm\cdot}$ induced by the norm $\norm\cdot_F$.

Suppose that $\norm\cdot_{F_n} \to \norm\cdot_{F}$ pointwise. Then, by Theorem~\ref{theoWass}, we have $F_n\to F$ in the Wasserstein metric. Let $\bfX^{(n)}$, $\bfX$ have dfs $F_n$ and $F$. This yields 
$$
\forall j\in \{1,\ldots,d\}, \ E\left( X_j^{(n)} \right) \to E\left( X_j \right) \ \mbox{ as } \ n\to\infty, 
$$
and thus, since $\bfX^{(n)}$, $\bfX$ satisfy $(\mathcal{H})$, there is $c>0$ such that 
$E\left( X_j \right)\geq c$ and $E( X_j^{(n)} )\geq c$ for any $n$. Define then a weighted sup-norm $\norm\cdot_{\infty,c}$ on $\R^{d+1}$ by
$$
\norm{(x_0,x_1,\ldots,x_d)}_{\infty,c} := \max(|x_0|, c|x_1|,\ldots,c|x_d|).
$$
By Proposition~\ref{propbounds}, we obtain $\norm\cdot_{F} \geq \norm\cdot_{\infty,c}$ and $\norm\cdot_{F_n} \geq \norm\cdot_{\infty,c}$ for any $n$. Consequently, if $B$ denotes the closed unit ball for the norm $\norm\cdot_{\infty,c}$ and $\mathcal{B} := B\cap [0,\infty)^{d+1}$, then $\mathcal{B}$ contains $S_{\norm\cdot_F}^+$ and the $S_{\norm\cdot_{F_{\scalebox{.7}{$\scriptscriptstyle n$}}}}^+$ for any $n$. In addition, by inequality~\eqref{FnormLip} and since $\mathcal{B}$ is compact,
$$
u_n:= \sup_{\bfx\in \mathcal{B}} | \norm{\bfx}_{F_n} - \norm{\bfx}_F | \leq \sup_{\bfx \in \mathcal{B}} \norm{\bfx}_{\infty} \cdot d_W(F_n,F) \to 0.
$$
Assume from now on that $n$ is so large that $u_n<1$. Pick $\bfx$ in $S_{\norm\cdot_F}^+$. Then since $\mathcal{B}$ contains $S_{\norm\cdot_F}^+$, we have 
$$
| \norm{\bfx}_{F_n} - 1 | = | \norm{\bfx}_{F_n} - \norm{\bfx}_F | \leq \sup_{\bfx\in \mathcal{B}} | \norm{\bfx}_{F_n} - \norm{\bfx}_F | =u_n.
$$
This also entails $\norm{\bfx}_{F_n} \geq 1-u_n$. Note then that $\bfx/\norm{\bfx}_{F_n} \in S_{\norm\cdot_{F_{\scalebox{.7}{$\scriptscriptstyle n$}}}}^+$ and thus 
\begin{equation}
\label{Haus1}
\norm{\bfx - \frac{\bfx}{\norm{\bfx}_{F_n}}}_F = \frac{| \norm{\bfx}_{F_n} - 1 |}{\norm{\bfx}_{F_n}} \leq \frac{u_n}{1-u_n} =:\varepsilon_n.
\end{equation}
If $\bfx$ in $S_{\norm\cdot_{F_{\scalebox{.7}{$\scriptscriptstyle n$}}}}^+$ we have, since $\mathcal{B}$ contains $S_{\norm\cdot_{F_{\scalebox{.7}{$\scriptscriptstyle n$}}}}^+$, 
$$
| \norm{\bfx}_F - 1 | = | \norm{\bfx}_{F_n} - \norm{\bfx}_F | \leq \sup_{\bfx\in \mathcal{B}} | \norm{\bfx}_{F_n} - \norm{\bfx}_F | =u_n.
$$
Write then $\bfx/\norm{\bfx}_F \in S_{\norm\cdot_F}^+$, which yields 
\begin{equation}
\label{Haus2}
\norm{\bfx - \frac{\bfx}{\norm{\bfx}_F}}_F = | \norm{\bfx}_F - 1 | \leq u_n\leq \varepsilon_n.
\end{equation}
From~\eqref{Haus1} and~\eqref{Haus2} it follows that 
$$
d_{H,\norm\cdot_F}\left(S_{\norm\cdot_{F_{\scalebox{.7}{$\scriptscriptstyle n$}}}}^+, S_{\norm\cdot_F}^+ \right)\leq \varepsilon_n\to 0,
$$
showing the convergence of $S_{\norm\cdot_{F_{\scalebox{.7}{$\scriptscriptstyle n$}}}}^+$ to $S_{\norm\cdot_F}^+$ in the Hausdorff metric $d_{H,\norm\cdot_F}$.

Conversely, suppose that $S_{\norm\cdot_{F_{\scalebox{.7}{$\scriptscriptstyle n$}}}}^+ \to S_{\norm\cdot_F}^+$ in the Hausdorff metric $d_{H,\norm\cdot_F}$. By radial symmetry and homogeneity of $F$-norms it is enough to prove the desired pointwise convergence of $\norm{\cdot}_{F_n}$ to $\norm{\cdot}_F$ on $S_{\norm\cdot_F}^+$. Pick then $\bfx\in S_{\norm\cdot_F}^+$. Note that $\bfx/\norm{\bfx}_{F_n}\in S_{\norm\cdot_{F_{\scalebox{.7}{$\scriptscriptstyle n$}}}}^+$ and thus, by assumption, there is a sequence $(\bfz^{(n)})\subset S_{\norm\cdot_F}^+$ with
$$
\norm{\bfz^{(n)}-\frac{\bfx}{\norm{\bfx}_{F_n}}}_F\to 0.
$$ 
By the reverse triangle inequality, this entails 
$$
\left| 1-\frac{1}{\norm{\bfx}_{F_n}} \right| = \left| \norm{\bfz^{(n)}}_F-\frac{\norm{\bfx}_F}{\norm{\bfx}_{F_n}} \right| \leq \norm{\bfz^{(n)}-\frac{\bfx}{\norm{\bfx}_{F_n}}}_F \to 0.
$$
This shows that $1/\norm{\bfx}_{F_n} \to 1$ and thus $\norm{\bfx}_{F_n}\to 1=\norm{\bfx}_F$ as required.
\end{proof}
%

\section*{Acknowledgments}

This research was in large part carried out when M. Falk was visiting G. Stupfler at the University of Nottingham in July 2018. The first author is grateful to his host for his
hospitality and the extremely constructive atmosphere. Support from the London Mathematical Society Research in Pairs Scheme (reference 41710) is gratefully acknowledged.


\end{document}